\def\@cite#1#2{{\m@th\upshape\bfseries%
[{#1\if@tempswa{\m@th\upshape\mdseries, #2}\fi}]}}
\theoremstyle{plain}
\newtheorem{theorem}{Theorem}[section]
\newtheorem{corollary}[theorem]{Corollary}
\newtheorem{proposition}[theorem]{Proposition}
\newtheorem{lemma}[theorem]{Lemma}
\theoremstyle{definition}
\newtheorem{definition}[theorem]{Definition}
\newtheorem{example}[theorem]{Example}
\newtheorem{remark}[theorem]{Remark}
\theoremstyle{remark}
\newcommand{\bbC}{{\mathbb{C}}}
\newcommand{\bbD}{{\mathbb{D}}}
\newcommand{\bbI}{{\mathbb{I}}}
\newcommand{\bbN}{{\mathbb{N}}}
\newcommand{\bbT}{{\mathbb{T}}}
\newcommand{\bbZ}{{\mathbb{Z}}}
\newcommand{\A}{{\mathcal{A}}}
\newcommand{\B}{{\mathcal{B}}}
\newcommand{\C}{{\mathcal{C}}}
\newcommand{\D}{{\mathcal{D}}}
\newcommand{\G}{{\mathcal{G}}}
\renewcommand{\H}{{\mathcal{H}}}
\newcommand{\J}{{\mathcal{J}}}
\newcommand{\K}{{\mathcal{K}}}
\renewcommand{\L}{{\mathcal{L}}}
\newcommand{\N}{{\mathcal{N}}}
\renewcommand{\O}{{\mathcal{O}}}
\newcommand{\T}{{\mathcal{T}}}
\newcommand{\V}{{\mathcal{V}}}
\newcommand{\X}{{\mathcal{X}}}
  \newcommand{\cN}{{\mathcal{N}}}
	\newcommand{\cO}{{\mathcal{O}}}
  \newcommand{\cT}{{\mathcal{T}}}
\newcommand{\rC}{{\mathrm{C}}}
\renewcommand{\phi}{\varphi}
\newcommand{\upchi}{{\raise.35ex\hbox{\ensuremath{\chi}}}}
\newcommand{\fin}{\operatorname{fin}}
\newcommand{\Aut}{\operatorname{Aut}}
\newcommand{\id}{{\operatorname{id}}}
\newcommand{\Ind}{{\operatorname{Ind}}}
\newcommand{\spn}{\operatorname{span}}
\newcommand{\ca}{\mathrm{C}^*}
\newcommand{\cenv}{\mathrm{C}^*_{\text{env}}}
\newcommand{\cmax}{\mathrm{C}^*_{\text{max}}}
\newcommand{\sot}{\textsc{sot}}
\newenvironment{sbmatrix}{\left[\begin{smallmatrix}}
{\end{smallmatrix}\right]}
\newcommand{\sca}[1]{\left\langle#1\right\rangle}
\newcommand\cpr{\rtimes_{\alpha}^{r}\, {\mathcal{G}}}
\newcommand\cpf{\rtimes_{\alpha}\, {\mathcal{G}}}
\newcommand\cpd{ \hat{\rtimes}_{\alpha}\, {\mathcal{G}}}
\begin{document}
\title{The non-selfadjoint approach to the Hao-Ng isomorphism}

\author[E.G. Katsoulis]{Elias~G.~Katsoulis}
\address {Department of Mathematics 
\\East Carolina University\\ Greenville, NC 27858\\USA}
\email{katsoulise@ecu.edu}

\author[C. Ramsey]{Christopher~Ramsey}
\address {Department of Mathematics and Statistics
\\MacEwan University \\ Edmonton, AB \\Canada}
\email{ramseyc5@macewan.ca}

\begin{abstract}
In an earlier work, the authors proposed a non-self-\break adjoint approach to the Hao-Ng isomorphism problem for the full crossed product, depending on the validity of two conjectures stated in the broader context of crossed products for operator algebras. By work of Harris and Kim, we now know that these conjectures in the generality stated may not always be valid. In this paper we show that in the context of hyperrigid tensor algebras of $\ca$-correspondences, each one of these conjectures is equivalent to the Hao-Ng problem. This is accomplished by studying the representation theory of non-selfadjoint crossed products of C$^*$-correspondence dynamical systems; in particular we show that there is an appropriate dilation theory. A large class of tensor algebras of $\ca$-correspondences, including all regular ones, are shown to be hyperrigid. Using Hamana's injective envelope theory, we extend earlier results from the discrete group case to arbitrary locally compact groups; this includes a resolution of the Hao-Ng isomorphism for the reduced crossed product and all hyperrigid $\ca$-correspondences. A culmination of these results is the resolution of the Hao-Ng isomorphism problem for the full crossed product and all row-finite graph correspondences; this extends a recent result of Bedos, Kaliszewski, Quigg and Spielberg. 
\end{abstract}

\thanks{2010 {\it  Mathematics Subject Classification.}
46L07, 46L08, 46L55, 47B49, 47L40, 47L65}
\thanks{{\it Key words and phrases:} $\ca$-correspondence, crossed product, Cuntz-Pimsner algebra, tensor algebra, Hao-Ng isomorphism, C$^*$-envelope, operator algebra.}

\maketitle

\section{Introduction}

Let $((X,\C), \G, \alpha)$ be a C$^*$-correspondence dynamical system where $\G$ is a locally compact group and $\alpha$ is a generalized gauge action. This action can be extended uniquely to the Cuntz-Pimsner algebra $\O_X$
The \textit{Hao-Ng isomorphism problem} asks whether 
\[
\O_X \rtimes_\alpha \G \simeq \O_{X\rtimes_\alpha \G}
\]
in the reduced or full crossed products. This problem is named after Hao and Ng who proved the validity of this formula when $\G$ is amenable \cite[Theorem 2.10]{HN}. However, this formula was first studied by Abadie in the context of Takai duality for equivalence bimodules. Indeed, in Abadie's proof for the Takai duality, the Hao-Ng isomorphism forms the crucial step of the proof and corresponds to the key isomorphism of \cite[Lemma 7.2]{Williams} in the classical case. In general, the Hao-Ng isomorphism has proved to be a significant stimulant to research as versions of it appear in many different contexts, e.g. in Schafhauser's work \cite{Sch} on AF-embedability, or in Deaconu's work \cite{Dea, DKQ} on group actions on graph $\ca$-algebras. In its full generality, the problem remains open and under investigation by several authors \cite{BKQR, KQR, KQR2, Kim, Mor}.

The authors initiated a study in \cite{KatRamMem, Kat, KatRamCP2} of non-selfadjoint crossed products of operator algebra dynamical systems $(\A, \G, \alpha)$ where $\alpha$ acts by completely isometric isomorphisms of $\A$. The main thrust of \cite[Chapter 7]{KatRamMem} and \cite{Kat} is that the Hao-Ng isomorphism problem can and should be thought of as a non-selfadjoint problem. For the reduced crossed product this kind of approach has been and continues to be quite successful. For instance, we now know that the Hao-Ng isomorphism for the reduced crossed product holds for all discrete groups \cite{Kat}, a fact that resolves an open problem from \cite{BKQR} (and more is accomplished in this paper). 

The Hao-Ng isomorphism for the \textit{full} crossed product seems to be a much harder problem. In \cite{KatRamMem} we envisioned the following line of attack. First one verifies 
\begin{equation}\label{env-iso}
\cenv(\A \rtimes_\alpha \G) \simeq \cenv(A) \rtimes_\alpha \G
\end{equation}
in the full crossed product case for an arbitrary non-selfadjoint dynamical system $(\A, \G, \alpha)$; this is Problem 1 in \cite{KatRamMem}. Subsequently, one solves Problem 2 in \cite{KatRamMem} by showing that all relative crossed products coincide. Assuming that both problems have been resolved in the positive, now one specializes on tensor algebra dynamical systems and obtains 
\begin{equation}\label{tensor-iso}
\T_X^+ \rtimes_\alpha \G \simeq \T^+_{X\rtimes_\alpha \G} 
\end{equation}
by invoking the solution of Problem 2 and the remarks following \cite[Theorem 7.13]{KatRamMem}. Recalling that $\cenv(\T_X^+) = \O_X$, one recovers now the Hao-Ng isomorphism by combining equations (\ref{env-iso}) and (\ref{tensor-iso}). Note that even though a positive answer for \textit{both} Problems 1 and 2 leads to a positive resolution for the Hao-Ng isomorshism, the exact relation of each one of these problems with the Hao-Ng isomorphism was never clarified in \cite{KatRamMem}.

The central result of this paper, Theorem~\ref{thm;summarize}, clarifies that relation and shows that the Hao-Ng problem actually leads to equivalent statements in non-selfadjoint operator algebra theory, whose validity or refutation will therefore resolve the isomorphism. Specifically, for a non-degenerate $\ca$-correspondence $X$ we show that validity of (\ref{env-iso}) for $\A=\T^+_X$ is equivalent to the validity of the Hao-Ng isomorphism $\O_X \rtimes_\alpha \G \simeq \O_{X\rtimes_\alpha \G}$. In addition, for a large class of $\ca$-correspondences, including all regular ones, we show that the validity of the Hao-Ng isomorphism $\O_X \rtimes_\alpha \G \simeq \O_{X\rtimes_\alpha \G}$ is equivalent to the fact that all relative crossed products for $(\T^+_X, \G, \alpha)$ coincide, where $\alpha$ is a generalized gauge action. (For a general $\ca$-correspondence $X$ this last statement is implied by any of the previous two.) 

Theorem~\ref{thm;summarize} relates to exciting new work by Harris and Kim \cite{HK}. Indeed these authors have answered both Problems 1 and 2 from  \cite[Chapter 7]{KatRamMem} by producing finite dimensional, hyperrigid dynamical systems $(\A, \G, \alpha)$ with distinct relative crossed products and failing (\ref{env-iso}). However the examples of Harris and Kim \cite{HK} do not concern tensor algebras of $\ca$-correspondences and so the Hao-Ng problem remains open. Theorem~\ref{thm;summarize} shows now that the resolution of the Hao-Ng problem will lead to or will follow from  the existence or the absence of Harris-Kim type examples but in the realm of tensor algebras. Needless to say that the quest for such examples, or the refutation of their existence, becomes now a project of high priority.

To test our new results, we study the Hao-Ng isomorphism for a class of $\ca$-correspondences that plays a central role in the theory: graph $\ca$-correspondences. In Theorem \ref{hao-ng} we show that the Hao-Ng isomorphism problem is true in the case of row-finite graph correspondences, thus showing that the crossed product of such a Cuntz-Krieger algebra is the Cuntz-Pimsner algebra of a crossed product (of a graph) correspondence. This is done by showing that in the case of a dynamical system $(\A, \G, \alpha)$ where $\A$ is the tensor algebra of \textit{any} graph, $\G$ any locally compact group and $\alpha$ a generalized gauge action, all relative crossed products coincide. Then, Theorem~\ref{thm;summarize} finishes the proof for row-finite graphs. Note that in the special case where $\G$ is discrete, Theorem \ref{hao-ng} has also been obtained independently by Bedos, Kaliszewski, Quigg and Spielberg using different methods~\cite[Corollary 6.8 and Remark 6.10]{BKQS}. It is worth mentioning here that Theorem \ref{hao-ng} is essentially obtained by dilating representations in a wholly constructive manner and should prove of much interest to those who study the representation theory of C$^*$-correspondences. At the moment, the lack of a constructive dilation proof of $\cenv(\T_X^+) = \O_X$ seems to be a barrier to establishing (\ref{tensor-iso}) for the full crossed product in general. 

On the way to proving the above theorems we obtain several results of independent interest. First we resolve Problem 3 from our monograph \cite[Chapter 8]{KatRamMem}. Specifically, we show if $(X, \C)$ is a non-degenerate $\ca$-correspondence and $\alpha: \G \rightarrow (X, \C)$ is the generalized gauge action of a locally compact group, then $\T_X^+ \cpf $ is necessarily the tensor algebra of some $\ca$-correspondence.

Another result of independent interest is Theorem~\ref{thm:hyperrigid}, which identifies a large class of hyperrigid $\ca$-correspondences, i.e., $\ca$-correspon-\break dences whose tensor algebras are hyperrigid. Indeed our central Theorem~\ref{thm;summarize} actually applies to all hyperrigid $\ca$-correspondences. To make that result usable, we show that any $\ca$-correspondence $(X, \C, \phi_X)$ with $\phi_X(\J_X)X=X$ is hyperrigid (here $\J_X$ denotes Katsura's ideal). This includes all previous known examples of hyperrigid $\ca$-correspondences and many more, e.g., all regular ones.

An interesting byproduct of our techniques on the full crossed product version of the Hao-Ng problem, is the resolution of the same problem for the reduced crossed product and all hyperrigid $\ca$-correspon-\break dences. In \cite{Kat} the first named author verified the Hao-Ng isomorphism for the reduced crossed product and all discrete groups. Because here we are addressing locally compact groups which may not be discrete, we have to use an approach different from that of \cite{Kat}. In particular, the algebra $\A$ does not embed in either $\A \cpf$ or $\A \cpr$ and so restricting a maximal map of the crossed product on the core algebra $\A$ (as we did in \cite{Kat}) is no longer an option. Instead we use Hamana's injective envelope theory, an approach towards the Hao-Ng isomorphism which is used in this paper for the first time. This approach was adopted after illuminating discussions with S. Echterhoff and we are grateful to him for that.

We denote by $\bbN$ the set of positive integers, while $\bbZ^+_0=\bbN \cup \{0\}$.  We denote by $\overline{\spn}\{\cdots\}$ the closure of the linear span of $\{\cdots\}$. An ideal of a $\ca$-algebra always means a closed two-sided ideal.

\section{Crossed products and C$^*$-covers}

Let $(\A, \G, \alpha)$ be an operator algebra dynamical system, meaning that $\A$ is an approximately unital operator algebra and $\G$ is a locally compact (Hausdorff) group acting continuously on $\A$ by completely isometric automorphisms, $\alpha: \G \rightarrow \Aut(\A)$. The aim of this section is to better understand the relationship of $\alpha$-admissible C$^*$-covers. Recall that a C$^*$-cover $(\C, \iota)$ of $\A$ is a C$^*$-algebra $\C$ and a complete isometry $\iota : \A \rightarrow \C$ such that $C^*(\iota(\A)) = \C$.

The two nicest C$^*$-covers of $\A$ are the ``biggest'' and the ``smallest'' covers $\cmax(\A)$ and  $\cenv(\A)$. These are defined by their universal properties. Namely, whenever $(\C, \iota)$ is a C$^*$-cover there are (unique) surjective $*$-homomorphisms $\varphi : \cmax(\A) \rightarrow \C$ and $\psi : \C \rightarrow \cenv(\A)$ such that $\varphi(a) = \iota(a)$ and $\psi(\iota(a)) = a$, for all $a\in \A$. 

From \cite{KatRamMem}, a C$^*$-cover $(\C,\iota)$ is called {\em $\alpha$-admissible} if there exists a group representation $\beta : \G \rightarrow \Aut(\C)$ acting on $\C$ by $*$-automorphisms such that 
\[
\beta_s(\iota(a)) = \iota(\alpha_s(a)), \quad \forall s\in \G, a\in \A.
\]
In \cite[Lemma 3.3]{KatRamMem} we established that both $\cenv(\A)$ and $\cmax(\A)$ are always $\alpha$-admissible.
However, in \cite{KatRamMem} we did not provide any examples of C$^*$-covers which fail to be $\alpha$-admissible. We thank David Sherman for bringing this to our attention and asking us whether such covers do exist.

\begin{proposition}
Not all C$^*$-covers are $\alpha$-admissible.
\end{proposition}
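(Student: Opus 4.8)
The plan is to produce a concrete counterexample rather than argue abstractly. I would look for the simplest possible dynamical system: a finite-dimensional operator algebra $\A$ together with an action $\alpha$ of a small group (ideally $\G = \bbZ/2\bbZ$ or $\G = \bbZ$), and then exhibit a single $\ca$-cover $(\C, \iota)$ on which no compatible group representation $\beta$ can exist. The key idea is that $\alpha$-admissibility requires the automorphism $\alpha_s$ of $\A$ to lift to a genuine $*$-automorphism of $\C$ compatible with $\iota$. A $*$-automorphism of $\C$ must preserve all $\ca$-algebraic structure that $\C$ carries beyond what $\A$ sees — in particular, it must permute the minimal central projections, respect the dimensions of matrix summands, and fix the center appropriately. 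So the strategy is to engineer an $\alpha$ that acts ``symmetrically'' on $\A$ but whose only candidate lifts to $\C$ would have to scramble $\ca$-data of $\C$ in a way that is impossible for a $*$-automorphism.

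Concretely, I would take $\A$ to be a two-dimensional algebra, say $\A \cong \bbC \oplus \bbC$ (functions on two points) or a small nest-type algebra, with $\alpha$ the order-two flip swapping the two coordinates. This flip is manifestly a completely isometric automorphism of $\A$. Now I would choose a $\ca$-cover $(\C,\iota)$ that embeds $\A$ asymmetrically with respect to the flip — for instance, realizing the two summands of $\A$ inside matrix blocks of $\C$ of \emph{different} sizes (say a $1\times 1$ block and a $2 \times 2$ block of $\Mat_2$ or a direct sum $\bbC \oplus \Mat_2$). Because $C^*(\iota(\A)) = \C$, the cover $\C$ is generated by the image, but the self-adjoint completion can force unequal block dimensions. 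Any $\beta_s$ implementing the flip would have to carry one block isomorphically onto the other; since a $*$-isomorphism preserves dimension, the unequal block sizes obstruct the existence of $\beta$, and hence $(\C,\iota)$ fails to be $\alpha$-admissible.

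The main steps, in order, are: (i) fix $\A$ and the finite group $\G$ with its action $\alpha$ by complete isometries; (ii) write down the candidate $\ca$-cover $\C$ explicitly, together with the embedding $\iota$, and verify $C^*(\iota(\A)) = \C$ so that it genuinely is a cover; (iii) verify that $\iota$ is a complete isometry, which for a concrete finite-dimensional embedding is a direct computation on matrix norms; and (iv) prove the non-existence of $\beta$. Step (iv) is the heart of the matter: I would suppose for contradiction that $\beta_s : \C \to \C$ is a $*$-automorphism with $\beta_s \circ \iota = \iota \circ \alpha_s$, and then trace through what $\beta_s$ must do to the generators of $\C$, deriving an incompatibility with the $*$-structure (an identity of the form $\beta_s(c^*c) = \beta_s(c)^*\beta_s(c)$ that cannot hold given where $\iota(\alpha_s(\A))$ forces the images to go).

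I expect step (iv) to be the main obstacle, and the delicate point is \emph{choosing the cover correctly} in step (ii): the embedding must be asymmetric enough that the symmetry $\alpha$ has no $*$-automorphic lift, yet $\iota$ must still be completely isometric and $\C$ must still be generated by $\iota(\A)$. The tension is that making $\C$ ``too large'' may accidentally reintroduce a lifting (as $\cmax(\A)$ and $\cenv(\A)$ always admit one by \cite[Lemma 3.3]{KatRamMem}), so I must verify that my chosen $\C$ is an \emph{intermediate} cover, neither the maximal nor the enveloping one. A clean way to guarantee intermediacy is to build $\C$ by taking a direct sum of a known admissible cover with an extra summand that destroys the symmetry, then confirm directly that the flip cannot be implemented on that extra piece.
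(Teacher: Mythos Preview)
Your overall strategy --- find a cover where the symmetry of $\A$ cannot be promoted to a $*$-automorphism of $\C$ --- is sound, but the concrete candidates you name collapse immediately. The algebra $\A = \bbC \oplus \bbC$ is itself a $\ca$-algebra, and any completely isometric homomorphism of a $\ca$-algebra into a $\ca$-algebra is automatically a $*$-homomorphism. Hence $\iota(\A)$ is already a $\ca$-subalgebra, so $C^*(\iota(\A)) = \iota(\A) \cong \A$: the \emph{only} $\ca$-cover of $\bbC\oplus\bbC$ is $\bbC\oplus\bbC$, and the flip trivially extends. There is simply no ``asymmetric'' cover to write down; your step (ii) is vacuous for this $\A$. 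The nest-algebra alternative fares no better: the upper-triangular $2\times 2$ matrices have no completely isometric automorphism swapping the diagonal idempotents (any such map would be forced to annihilate the radical $\bbC e_{12}$), so there is no order-two flip to lift in the first place. The missing idea is that $\A$ must be genuinely non-selfadjoint for the space of $\ca$-covers to be rich enough to contain an inadmissible one --- your block-dimension obstruction \emph{can} be made to work, but only after replacing $\bbC\oplus\bbC$ by something like $A(\bbD)\oplus A(\bbD)$ with the swap, embedded via $\iota(f,g) = (f|_{\bbT}, T_g)$ into $C(\bbT)\oplus\T$ (Toeplitz algebra), where the two summands are non-isomorphic $\ca$-algebras.

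The paper proceeds differently: it takes the single non-selfadjoint algebra $\A = A(\bbD)$, acts by $\bbZ$ through a M\"obius map, and uses the cover $\C = C(\bbT)\oplus M_2$ with $\iota(z) = z \oplus \left[\begin{smallmatrix}0&0\\1&0\end{smallmatrix}\right]$. The obstruction is not a dimension mismatch but a norm computation: one manufactures the element $0\oplus\left[\begin{smallmatrix}0&0\\1&0\end{smallmatrix}\right] = \iota(z) - \iota(z^2)\iota(z)^*$ inside $\C$, computes what the putative $\tilde\alpha_1$ would do to it, and checks that the result has norm strictly less than $1$, contradicting isometry.
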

\begin{proof}

Let $\C = C(\bbT) \oplus M_2$ and $\iota : A(\bbD) \rightarrow \C$ be given by $z \mapsto z\oplus \begin{sbmatrix} 0&0\\ 1&0\end{sbmatrix}$. By von Neumann's inequality it is straightforward that $\iota$ is a complete isometry. Now 
\begin{align*}
\iota(z) - \iota(z^2)\iota(z)^* & = (z\oplus \begin{sbmatrix} 0&0\\ 1&0\end{sbmatrix}) - (z^2\oplus \begin{sbmatrix} 0&0\\ 0&0\end{sbmatrix})(\bar z \oplus \begin{sbmatrix} 0&1\\ 0&0\end{sbmatrix}) \\
& = 0 \oplus \begin{sbmatrix} 0&0\\ 1&0\end{sbmatrix}.
\end{align*}
Thus, $C^*(\iota(A(\bbD))) = \C$ and $(\C, \iota)$ is a C$^*$-cover of $A(\bbD)$.

Consider the M\"{o}bius transformation $\varphi(z) = \frac{z - \frac{1}{2}}{1-\frac{z}{2}}$ which gives $\varphi\in \Aut(\bbD)$. From this define the dynamical system $(A(\bbD), \alpha, \bbZ)$ where $\alpha_n(f) = f\circ\varphi^n$  which is the same as $z\mapsto \varphi^n(z)$. It is well known that composition with a M\"{o}bius map is a completely isometric automorphism of the disc algebra.

Suppose that there exists $\tilde\alpha : \bbZ \rightarrow \Aut(\C)$ such that $\tilde\alpha_n(i(f)) = i(\alpha_n(f)), \forall f\in A(\bbD)$.
Calculating
\begin{align*}
\varphi(z) = \frac{z - \frac{1}{2}}{1-\frac{z}{2}} & = \left(z- \frac{1}{2}\right)\left( 1 + \frac{z}{2} + \frac{z^2}{4} + \cdots \right) \\
& = -\frac{1}{2} + \frac{3}{4}z + \frac{3}{8}z^2+ \cdots
\end{align*}
we get that
\[
\iota(\varphi(z)) = \varphi(z) \oplus \begin{sbmatrix} -\frac{1}{2}&0\\ \frac{3}{4}&-\frac{1}{2}\end{sbmatrix}.
\]
Hence,
\begin{align*}
\tilde\alpha_1\left( 0 \oplus \begin{sbmatrix} 0&0\\ 1&0\end{sbmatrix} \right) & = \tilde\alpha_1(\iota(z) - \iota(z^2)\iota(z)^*)
\\ & = \iota(\alpha_1(z)) - \iota(\alpha_1(z^2))\iota(\alpha(z))^*
\\ & = \iota(\varphi(z)) - \iota(\varphi(z))^2 \iota(\varphi(z))^*
\\ & = \varphi(z) \oplus \begin{sbmatrix} -\frac{1}{2}&0\\ \frac{3}{4}&-\frac{1}{2}\end{sbmatrix} - \left(\varphi(z)^2 \oplus \begin{sbmatrix} \frac{1}{4}&0\\ -\frac{3}{4}& \frac{1}{4}\end{sbmatrix}\right)\left( \overline{\varphi(z)} \oplus \begin{sbmatrix} -\frac{1}{2}&\frac{3}{4}\\ 0&-\frac{1}{2}\end{sbmatrix}\right)
\\ & = 0 \oplus \begin{sbmatrix} -\frac{3}{8}&-\frac{3}{16}\\ \frac{3}{8}&\frac{3}{16}\end{sbmatrix}.
\end{align*}
But then
\[
\left\| 0 \oplus \begin{sbmatrix} -\frac{3}{8}&-\frac{3}{16}\\ \frac{3}{8}&\frac{3}{16}\end{sbmatrix} \right\|
= \sqrt{2} \left\| \begin{sbmatrix} \frac{3}{8}\\ \frac{3}{16}\end{sbmatrix} \right\| 
= \frac{3\sqrt{10}}{16} < 1 = \left\| 0 \oplus \begin{sbmatrix} 0&0\\ 1& 0\end{sbmatrix} \right\|,
\]
a contradiction as $*$-automorphisms are isometric. Therefore, no such $\tilde\alpha$ exists and $(\C, \iota)$ is a non $\alpha$-admissible C$^*$-cover of $(A(\bbD), \alpha, \bbZ)$.
\end{proof}


If we do have an $\alpha$-admissible cover then we can abuse the notation and call the group representation $\alpha$ again because of the next result.

\begin{lemma}
Let $(\A, \G, \alpha)$ be an operator algebra dynamical system. If $(\C, \iota)$ is an $\alpha$-admissible C$^*$-cover then there is a unique group representation of $\G$ on $\C$ acting by $*$-automorphisms extending $\alpha$.
\end{lemma}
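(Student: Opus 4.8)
The plan is to observe that the \emph{existence} of such a representation is precisely the content of the hypothesis that $(\C,\iota)$ is $\alpha$-admissible, so the only thing requiring argument is \emph{uniqueness}. The key structural fact I would exploit is that $\iota(\A)$ generates $\C$ as a C$^*$-algebra, i.e. $C^*(\iota(\A))=\C$, so that a $*$-automorphism of $\C$ is completely determined by its restriction to $\iota(\A)$.

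In detail, suppose $\beta,\beta':\G\to\Aut(\C)$ are two group representations by $*$-automorphisms with $\beta_s(\iota(a))=\iota(\alpha_s(a))=\beta'_s(\iota(a))$ for all $s\in\G$ and $a\in\A$. Fixing $s\in\G$, I would first note that $\beta_s$ and $\beta'_s$ agree on $\iota(\A)$ by hypothesis. Since both are $*$-homomorphisms, applying the identity $\beta_s(x^*)=\beta_s(x)^*$ shows they also agree on the adjoint set $\iota(\A)^*=\{\iota(a)^*:a\in\A\}$. Hence $\beta_s$ and $\beta'_s$ coincide on the self-adjoint subset $\iota(\A)\cup\iota(\A)^*$ of $\C$.

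The remaining step is a standard density/continuity argument: the (not necessarily closed) $*$-subalgebra generated by $\iota(\A)\cup\iota(\A)^*$ is dense in $C^*(\iota(\A))=\C$, and $\beta_s,\beta'_s$, being $*$-automorphisms of a C$^*$-algebra, are isometric and hence continuous. Two continuous maps that agree on a dense subset agree everywhere, so $\beta_s=\beta'_s$. As $s\in\G$ was arbitrary, $\beta=\beta'$, which gives uniqueness and justifies writing $\alpha$ again for the extended representation.

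I do not expect a genuine obstacle here; the one point deserving care — and the only place where the non-selfadjoint nature of $\A$ enters — is that $\iota(\A)$ is generally not self-adjoint, so agreement on $\iota(\A)$ alone does not immediately force agreement on all of $\C$. This is exactly why the explicit passage through $\iota(\A)^*$, using that $\beta_s$ and $\beta'_s$ respect the involution, must precede the density argument.
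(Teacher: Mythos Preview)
Your argument is correct, and in fact more elementary than the one in the paper. The paper reduces uniqueness to the following special case: if $\beta:\G\to\Aut(\C)$ is any extension of the \emph{trivial} action on $\A$ (obtained as $\beta_{1,s}\circ\beta_{2,s}^{-1}$ from two candidate extensions), then each $\beta_s$ is the identity on $\C$. It then proves this using the universal property of $\cmax(\A)$: the canonical surjection $\varphi:\cmax(\A)\to\C$ is the \emph{unique} $*$-homomorphism with $\varphi(a)=\iota(a)$, and since $\beta_s\circ\varphi$ also has this property one gets $\beta_s\circ\varphi=\varphi$, hence $\beta_s=\id$. Your approach bypasses $\cmax(\A)$ entirely, using only that $\iota(\A)$ generates $\C$ as a $\ca$-algebra together with the multiplicativity, $*$-preservation, and continuity of $*$-automorphisms. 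Both routes are short; yours is the direct ``agreement on a dense $*$-subalgebra'' argument, while the paper's has the minor conceptual advantage of isolating the statement that any $*$-automorphic extension of $\id_\A$ to $\C$ is $\id_\C$.
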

\begin{proof}
Let $\beta_1$ and $\beta_2$ be two such extensions. Then for any $s \in \G$ we have $\beta_{1,s}\circ\beta_{2, s}^{-1} = \id$ on $\A$ and we just need to prove this is the identity map.
To this end assume that $\beta : \G \rightarrow \Aut(\C)$ extends the identity map on $\A$. That is, $\beta_s|_\A = \id_\A$, for all $s\in\G$.

By the universal property of $\cmax(\A)$ there is a unique surjective $*$-homomorphism $\varphi : \cmax(\A) \rightarrow \C$ such that $\varphi(a) = \iota(a)$, for all $a\in\A$. Notice that $\beta_s\circ\varphi(a) = \beta_s\circ\iota(a) = \iota(a)$, for all $a\in\A$. Thus, $\beta_s\circ\varphi = \varphi$ by uniqueness which implies that $\beta_s= \id$ on $\C$, for any $s \in \G$.
\end{proof}

Now we turn to crossed products of non-selfadjoint operator algebras. 

\begin{definition}[\cite{KatRamMem}]
Let $(\A, \G, \alpha)$ be an operator algebra dynamical system and let $(\C,\iota)$ be an $\alpha$-admissible C$^*$-cover. The {\em relative reduced and full crossed products} are denoted by $\A \rtimes^r_{(\C,\iota),\alpha} \G$ and $\A \rtimes_{(\C,\iota),\alpha} \G$ and are defined to be the closure of $C_c(\G, \A)$ in $\C \rtimes^r_\alpha \G$ and $\C \rtimes_\alpha \G$, respectively.
\end{definition}

All relative reduced crossed products are in fact completely isometrically isomorphic \cite[Theorem 3.12]{KatRamMem} and so we define the {\em reduced crossed product}, denoted $\A \rtimes^r_\alpha \G$, to be this unique object. Lastly, we define the {\em full crossed product} to be 
\[
\A \rtimes_\alpha \G := \A \rtimes_{\cmax(\A), \alpha} \G.
\]
In fact, this is the universal algebra for all covariant representations of $(\A, \G, \alpha)$ \cite[Proposition 3.7]{KatRamMem}. Finally, it should be noted that, as in the selfadjoint case, if $\G$ is amenable then the full and reduced crossed products coincide \cite[Theorem 3.14]{KatRamMem}.

Now we are able to state and prove the main theorem of this section. 

\begin{theorem}\label{thm::minmax}
Let $(\A, \G, \alpha)$ be an operator algebra dynamical system. Then for every $\alpha$-admissible C$^*$-cover $(\C, \iota)$ there are surjective completely contractive homomorphisms 
\[
\A \rtimes_\alpha \G \xrightarrow{q_{max}} \A \rtimes_{(\C,\iota), \alpha} \G \xrightarrow{q_{min}} \A \rtimes_{\cenv(\A), \alpha} \G
\]
such that they are just the identity on $C_c(\G, \A)$.
\end{theorem}

\begin{proof}
Label the unique extensions of $\alpha$ to the C$^*$-covers $\cenv(\A), (\C, \iota)$ and $\cmax(\A)$ respectively $\alpha_{\rm env}, \alpha_{\C}$ and $\alpha_{\max}$.
By the universal properties there exists surjective $*$-homomorphisms 
\[
\varphi_{\rm env} : \C \rightarrow \cenv(\A)\ \ \textrm{and} \ \ \varphi_{\max} : \cmax(\A) \rightarrow \C
\]
such that $\varphi_{\rm env}(\iota(a)) = a$ and $\varphi_{\max}(a) = \iota(a)$, for all $a\in \A$.

By uniqueness of the quotient maps and of the extensions we have the following commutative diagram:
\[
\begin{CD}
\cmax(\A) @>\varphi_{\max}>> \C @>\varphi_{\rm env}>> \cenv(\A) \\
@V\alpha_{\max}VV @V\alpha_{\C}VV @V\alpha_{\rm env}VV \\
\cmax(\A) @>\varphi_{\max}>> \C @>\varphi_{\rm env}>> \cenv(\A)
\end{CD}
\]
Thus, $\ker \varphi_{\max}$ is an $\alpha_{\max}$-invariant ideal and $\ker \varphi_{\rm env}$ is an $\alpha_{\C}$-invariant ideal. By \cite[Proposition 3.19]{Williams}, full C$^*$-crossed products preserve exact sequences by $\alpha$-invariant ideals. Hence, we have the following surjective $*$-homomorphisms
\[
\cmax(\A) \rtimes_{\alpha_{\max}} \G \ \xrightarrow{\varphi_{\max}  \rtimes \id} \ \C \rtimes_{\alpha_{\C}} \G \ \xrightarrow{\varphi_{\rm env}\rtimes \id} \ \cenv(\A) \rtimes_{\alpha_{\rm env}} \G.
\]
So 
\[
q_{\max} = \varphi_{\max} \rtimes \id|_{\A \rtimes_\alpha \G} \  \textrm{and} \ q_{\min} = \varphi_{\rm env} \rtimes \id|_{\A \rtimes_{(\C, \iota), \alpha} \G}
\]
are completely contractive homomorphisms which amount to the identity on $C_c(\G, \A)$.
\end{proof}

The benefit of this theorem, as will be used later, is that one needs only to show that the map $q_{\min}\circ q_{\max}$ is a completely isometric isomorphism to establish that all relative crossed products are the same.

\section{Hyperrigidity and the Hao-Ng isomorphism}

A not necessarily unital operator algebra $\A$ is said to be \textit{hyperrigid} if given any (non-degenerate) $*$-homomorphism 
\[
\tau \colon \cenv(\A) \longrightarrow B(\H)
\]
then $\tau$ is the only completely positive, completely contractive extension of the restricted map  $\tau_{\mid \A}$.  By adding an injective direct summand if necessary, it is easy to see that in order to verify hyperrigidity, one needs to consider only injective  $*$-representations $\tau$ but this need not concern us here. The term hyperrigid was coined by Arveson in \cite{Arv} but the concept had been floating around in various forms before this, e.g. \cite{Duncan}. 

Our definition is slightly weaker than that of Duncan's \cite[Section 4]{Duncan} as Duncan requests that $\tau$ be the only completely contractive extension of the restricted map, i.e., no requirement of positivity in the non-unital case. In any case \cite[Proposition 4]{Duncan} shows that the graph algebra of any row-finite graph is hyperrigid. Actually we are about to provide a much stronger result but first we need to remind the reader the definition and some of the basic notation regarding $\ca$-correspondences.

A $\ca$- correspondence $(X,\C,\phi_X)$, or just $(X,\C)$, consists of a $\ca$-algebra $\C$, a Hilbert $\C$-module $(X, \sca{\phantom{,},\phantom{,}})$ and a
(non-degenerate) $*$-homomorphism $\phi_X\colon \C \rightarrow \L(X)$ into the C$^*$-algebra of adjointable operators on $X$. Equivalently, a (represented) $\ca$-correspondence $(X,\C)$ consists of a $\ca$-algebra $\C \subseteq B(\K)$, $\K$ a Hilbert space, and a norm-closed $\C$-bimodule $X \subseteq B(\K)$ satisfying $X^*X \subseteq \C$ (this  allows us to define the inner product  $\sca{\phantom{,},\phantom{,}}$) and $\overline{\spn}\{ \C X\} =X$ (this is the non-degeneracy of the left action of $\C$). The equivalence of the two definitions follows from the fact that an abstract $\ca$-correspondence embeds in the Toeplitz $\ca$-algebra $\T_X$ that we will define below and therefore can be represented on a Hilbert space. 

A representation $(\rho,t)$ of a $\ca$-correspondence into
$B(\mathcal H)$, is a pair consisting of  a non-degenerate $*$-homomorphism $\rho\colon \C \rightarrow B(\H)$ and a linear map $t\colon X \rightarrow B(\H)$, such that
 $$\rho(c)t(x)=t(\phi_X(c)(x)),$$
for all $c\in \C$ and $x\in X$. 
It is called an isometric (Toeplitz) representation when 
 $$t(x)^*t(x')=\rho(\sca{x,x'}),$$
for all $c \in \C$ and $x,x'\in X$. 

By the relations above, the $\ca$-algebra generated by an isometric representation $(\rho,t)$ equals the closed linear span of $$t(x_1)\cdots  t(x_n)t(y_1)^*\cdots t(y_m)^*, \quad x_i,y_j\in X.$$
For any
isometric representation $(\rho,t)$ there exists a $*$-homomorphism
$\psi_t:\K(X)\rightarrow B$, such that $\psi_t(\theta_{x,y})=
t(x)t(y)^*$, where $\K(X)$ is the subalgebra of $\L(X)$ of so-called compact operators generated by $\theta_{x,y}(z) = x \langle y,z\rangle$. (See \cite[Chapter 3]{Kat1} for more details on this topic.)

There exists a universal Toeplitz representation, denoted as $(\rho_{\infty} , t_{\infty})$, so that any other representation of $(X,\C)$ is equivalent to a direct sum of sub-representations of $(\rho_{\infty} , t_{\infty})$. The Cuntz-Pimsner-Toeplitz $\ca$-algebra $\T_X$ is defined as the $\ca$-algebra generated by the image of $(\rho_{\infty} , t_{\infty})$. 

The \emph{tensor algebra} $\T_{X}^+$ of a $\ca$-correspondence \cite{MS}
$(X,\C)$ is the norm-closed subalgebra of $\T_X$ generated by
all elements of the form $\rho_{\infty}(c), t_{\infty}(x)$, $c \in \C$, $x \in X$. The tensor algebra $\T_X^+$ contains a faithful copy of the $\ca$-correspondence $(X, \C)$. Thus $X$ inherits an operator space from $\T_X^+$; we can now say that a representation $(\rho, t)$ of $(X, \C)$ is completely contractive whenever $t$ is a completely contractive map with respect to that operator space structure. 
 
Consider the ideal $$\J_X\equiv  \phi_X^{-1}(\K(X))\cap \ker\phi_X^{\perp}.$$ (which we will call Katsura's ideal.) An isometric representation $(\rho, t)$ of $(X, \C,\phi_X)$ is said to be \textit{covariant (Cuntz-Pimsner)} if and only if $\psi_t ( \phi_X(c)) = \rho (c)$, for all $c \in \J_X$. The universal $\ca$-algebra for all isometric covariant representations of $(X, \C)$ is the Cuntz-Pimsner algebra $\O_X$. The algebra $\O_X$ contains (a faithful copy of) $\C$ and (a unitarily equivalent) copy of $X$.

The first author and Kribs \cite[Lemma 3.5]{KatsoulisKribsJFA} have shown that the non-selfadjoint algebra of $\O_X$ generated by these copies of $\C$ and $X$ is completely isometrically isomorphic to $\T_X^+$. Furthermore, $\cenv(\T_X^+)\simeq \O_X$. See \cite{KatsoulisKribsJFA, MS} for more details.

Now to the hyperrigidity of tensor algebras.

\begin{theorem}\label{thm:hyperrigid}
Let $(X, \C)$ be a $\ca$-correspondence. If $\phi_X(\J_X)$ acts non-degenerately on $X$, then $(X, \C)$ is a hyperrigid $\ca$-correspondence, i.e., $\T^+_X$ is a hyperrigid operator algebra.
\end{theorem}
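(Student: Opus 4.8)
The plan is to verify hyperrigidity straight from the definition, using the multiplicative domain of the extending map. So fix a nondegenerate $*$-representation $\tau\colon \O_X\cong\cenv(\T^+_X)\to B(\H)$ together with a completely positive, completely contractive map $\Phi\colon \O_X\to B(\H)$ satisfying $\Phi|_{\T^+_X}=\tau|_{\T^+_X}$; the goal is to prove $\Phi=\tau$. Writing $(\rho,t)$ for the universal generators inside $\O_X$ and abbreviating $R(c)=\tau(\rho(c))$, $T(x)=\tau(t(x))$, I would work with the multiplicative domain
\[
\mathcal{D}=\{a\in\O_X:\ \Phi(a^*a)=\Phi(a)^*\Phi(a)\ \text{ and }\ \Phi(aa^*)=\Phi(a)\Phi(a)^*\}.
\]
Since $\Phi$ is completely positive, $\mathcal{D}$ is a $C^*$-subalgebra of $\O_X$ on which $\Phi$ restricts to a $*$-homomorphism. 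As $\O_X$ is generated as a $C^*$-algebra by $\rho(\C)$ and $t(X)$, it suffices to show $\rho(\C)\cup t(X)\subseteq\mathcal{D}$; then $\mathcal{D}=\O_X$, so $\Phi$ is a $*$-homomorphism agreeing with $\tau$ on generators, forcing $\Phi=\tau$.

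Two of the required memberships are immediate. Because $\rho(\C)$ is a $C^*$-subalgebra of $\T^+_X$ on which $\Phi=\tau$ is already multiplicative, we get $\rho(\C)\subseteq\mathcal{D}$. For $x\in X$ the isometric relation $t(x)^*t(x)=\rho(\sca{x,x})$ lands in $\rho(\C)\subseteq\T^+_X$, so $\Phi(t(x)^*t(x))=\tau(\rho(\sca{x,x}))=T(x)^*T(x)=\Phi(t(x))^*\Phi(t(x))$; thus $t(x)$ lies in the ``left'' half of the multiplicative domain for free. All the difficulty is therefore concentrated in the single coanalytic equality $\Phi(t(x)t(x)^*)=T(x)T(x)^*$, which is exactly the remaining half needed for $t(x)\in\mathcal{D}$, and which I expect to be the main obstacle. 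It is precisely here that both the covariance built into $\O_X$ and the hypothesis $\overline{\phi_X(\J_X)X}=X$ must be used: covariance lets one trade the coanalytic element $t(x)t(x)^*$ for an analytic one, while non-degeneracy supplies the dominating element inside $\phi_X(\J_X)$.

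To settle this equality I would proceed as follows. First, by the Cohen--Hewitt factorization theorem applied to the nondegenerate action of $\J_X$ on $X$, write $x=\phi_X(d)x_0$ with $d\in\J_X$, whence
\[
\theta_{x,x}=\phi_X(d)\,\theta_{x_0,x_0}\,\phi_X(d)^*\ \le\ \|x_0\|^2\,\phi_X(dd^*)=\phi_X(c),\qquad c:=\|x_0\|^2\,dd^*\in\J_X,\ c\ge 0.
\]
Applying the $*$-homomorphism $\psi_t$ and invoking Cuntz--Pimsner covariance $\psi_t(\phi_X(c))=\rho(c)$ turns this into the order relation $0\le t(x)t(x)^*\le\rho(c)$ in $\O_X$, where crucially $\rho(c)\in\T^+_X$, so that $\Phi(\rho(c))=\tau(\rho(c))$ \emph{exactly}. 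Second, a row-vector Schwarz inequality shows $\Phi(\psi_t(k))\ge\tau(\psi_t(k))$ for every positive $k\in\K(X)$: approximating such $k$ by finite sums $\sum_i\theta_{z_i,z_i}$ and applying complete positivity of $\Phi$ to the row $[\,t(z_1),\dots,t(z_n)\,]$ gives $\Phi\big(\sum_i t(z_i)t(z_i)^*\big)\ge\sum_i T(z_i)T(z_i)^*$, and one passes to the limit.

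Finally I would combine these. Decompose $\phi_X(c)=\theta_{x,x}+\big(\phi_X(c)-\theta_{x,x}\big)$ into two positive summands and apply the Schwarz bound to each, obtaining $\Phi(t(x)t(x)^*)\ge\tau(t(x)t(x)^*)$ and $\Phi\big(\psi_t(\phi_X(c)-\theta_{x,x})\big)\ge\tau\big(\psi_t(\phi_X(c)-\theta_{x,x})\big)$. Their sum equals $\Phi(\rho(c))=\tau(\rho(c))$, which is also the sum of the two $\tau$-values. Two positive operators that each dominate their $\tau$-counterpart yet sum to the same total must each coincide with that counterpart, so $\Phi(t(x)t(x)^*)=\tau(t(x)t(x)^*)=T(x)T(x)^*$. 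This gives $t(x)\in\mathcal{D}$, completes $\rho(\C)\cup t(X)\subseteq\mathcal{D}$, and hence yields $\mathcal{D}=\O_X$ and $\Phi=\tau$. The heart of the argument is the squeeze in this last step: the analytic data is controlled automatically by the multiplicative domain, and the only way I see to pin down $\Phi$ on $t(x)t(x)^*$ is to sandwich it beneath an element of $\T^+_X$ produced jointly by covariance and the standing non-degeneracy hypothesis.
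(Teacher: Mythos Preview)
Your argument is correct and shares the paper's core strategy: work in the multiplicative domain of the extending map, get the inner-product side $\Phi(t(x)^*t(x))=\Phi(t(x))^*\Phi(t(x))$ for free from $t(x)^*t(x)\in\rho(\C)\subseteq\T^+_X$, and force the coanalytic side by a Schwarz-type squeeze against an element $\rho(c)$ with $c\in\J_X$, using covariance to identify $\psi_t(\phi_X(c))=\rho(c)\in\T^+_X$. The difference is in how that squeeze is manufactured. The paper fixes $a\in\J_X$, invokes Kasparov's Stabilization Theorem to produce a frame $\{x_n\}$ with $\sum_n\theta_{x_n,x_n}\phi_X(a)=\phi_X(a)$, rewrites $\phi_X(aa^*)=\sum_n\theta_{\phi_X(a)x_n,\phi_X(a)x_n}$, and then applies Schwarz termwise to force each $t(\phi_X(a)x_n)$ into the multiplicative domain; non-degeneracy finishes. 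You instead use Cohen--Hewitt to factor $x=\phi_X(d)x_0$, dominate $\theta_{x,x}\le\phi_X(c)$ with $c=\|x_0\|^2 dd^*\in\J_X$, and run a two-term squeeze on $\phi_X(c)=\theta_{x,x}+(\phi_X(c)-\theta_{x,x})$.

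The only step you should make explicit is the claim that every positive $k\in\K(X)$ is a norm limit of finite sums $\sum_i\theta_{z_i,z_i}$; this is true (write $k^{1/2}=\lim F_m$ with $F_m$ finite rank and factor the positive Gram matrix so that $F_mF_m^*=\sum_k\theta_{w_k,w_k}$), but it is essentially the same ``frame'' mechanism that the paper invokes through Kasparov. So your route trades Kasparov for Cohen--Hewitt plus a Gram-matrix factorization; conceptually it makes the sandwich structure more transparent, while the paper's version is more explicit about which individual elements $t(\phi_X(a)x_n)$ land in the multiplicative domain.
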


\begin{proof}
Let $\tau\colon \O_X \longrightarrow B(\H)$ be a $*$-homomorphism and let $\tau'\colon \O_X  \longrightarrow B(\H)$ be a completely contractive and completely positive map that agrees with $\tau$ on $\T^+_{X}$. We are to prove that $\tau'$ is multiplicative and so it agrees with $\tau$. Since $\tau'$ is a completely contractive and completely positive map, we can use multiplicative domain arguments \cite[Proposition 1.5.7]{BRO}.

Let $(\rho, t)$ be the universal Cuntz-Pimsner representation of $(X, \C)$. Since $\rho(\C) \subseteq \T^+_X$ is a $\ca$-algebra, the multiplicative domain of $\tau'$ contains $\rho(\C)$. We claim that it also contains $t(X)$.

Indeed, for any $x \in X$ we have
\begin{equation} \label{one side}
\begin{aligned}
\tau'(t(x))^*\tau'(t(x))&=\tau(t(x))^*\tau(t(x))=\tau(t(x)^*t(x))\\
&=\tau(\rho(\sca{x, x}))=\tau'(\rho(\sca{x, x}))\\
&=\tau'(t(x)^*t(x)),
\end{aligned}
\end{equation}
where the equation on the second line holds because $\rho(\C) \subseteq \T_X^+$ and the two maps agree there.

Let $a \in \J_X$ and $x \in X$. Since $\phi_X(a) \in \K(X)$, we have $z_{m, k}, w_{m, k} \in X$, $m, k \in \bbN$, so that 
\begin{equation} \label{eq;fun0}
\phi_X(a) = \lim_{m \rightarrow \infty} \sum_{k} \, \theta_{z_{m, k}, w_{m,k}}
\end{equation}
is a limit of finite rank operators in $\K(X)$. Let $X_0\subseteq X$ be the $\C$-submodule generated by $x$ and all $z_{m, k}, w_{m, k} \in X$, $m, k \in \bbN$. Since $X_0$ is countably generated, Kasparov's Stabilization Theorem implies the existence of $\{x_n \}_{n=1}^{\infty}$ in $X_0$ so that $\|\sum_{n=1}^{\l} \theta_{x_n, x_n}\| \leq 1 $, for all $l \in \bbN  $, and
\begin{equation*}
\sum_{n=1}^{\infty} \theta_{x_n, x_n}(\xi) = \xi, \mbox{ for all } \xi \in X_0.
\end{equation*}
From this, a standard approximation argument involving (\ref{eq;fun0}) shows that 
\begin{equation} \label {eq;fun1}
 \sum_{n=1}^{\infty} \theta_{x_n, x_n} \phi_X(a) = \sum_{n=1}^{\infty} \phi_X(a) \theta_{x_n, x_n} =\phi_X(a),
\end{equation} 
with the convergence in the norm topology\footnote{This is exactly the same argument one uses on non-separable Hilbert space to write any compact operator as a (perhaps infinite) sum of rank-one operators.}. Then 
\begin{align*}
\phi_X(aa^*) &=\lim_k  \phi_X(a) \big(\sum_{n=1}^{k} \theta_{x_n, x_n}\big)\phi_X(a)^*\\
                    &=  \sum_{n=1}^{\infty} \theta_{\phi_X(a)x_n, \phi_X(a)x_n}.
\end{align*}
By the Schwarz inequality 
\begin{equation} \label{eq Dun1}
\tau'\big(t(\phi_X(a)x_n)t(\phi_X(a)x_n)^*\big) \geq \tau'(t(\phi_X(a)x_n))\tau'(t(\phi_X(a)x_n))^*,
\end{equation} for all $n \in \bbN$, and so 
\begin{align*}
\tau'(\rho(aa^*))&=\tau'(\psi_t(\phi_X(aa^*)) \\
&=\sum_{n=1}^{\infty}\tau'\big(t(\phi_X(a)x_n)t(\phi_X(a)x_n)^*\big) \\
&\geq \sum_{n=1}^{\infty}\tau'(t(\phi_X(a)x_n))\tau'(t(\phi_X(a)x_n))^*\\
&=\sum_{n=1}^{\infty}\tau(t(\phi_X(a)x_n))\tau(t(\phi_X(a)x_n))^*\\
&=\tau(\psi_t(\phi_X(aa^*))=\tau(\rho(aa^*)) = \tau'(\rho(aa^*)).
\end{align*}
Hence (\ref{eq Dun1}) is actually an equality. Combining this with (\ref{one side}), we conclude that $t(\phi_X(a)x_n)$ belongs to the multiplicative domain of $\tau'$, for all $a \in \J_X$ and $n \in \bbN$. Since $\rho(\C)$ is also contained in the multiplicative domain of $\tau'$, we have that 
\[
t(\phi_X(a)x)= \sum_{n=1}^{\infty} t(\phi_X(a)x_n)\rho(\sca{x_n,x})
\]
belongs to the multiplicative domain of $\tau'$, for all $a \in \J_X$ and $x \in X$. Since $\phi_X(\J_X)$ acts non-degenerately on $X$, the multiplicative domain of $\tau'$ contains $t(X)$, as desired. This completes the proof.\end{proof}


Recall that a $\ca$-correspondence $(X, \C)$ is said to be \textit{regular} iff $\C$ acts faithfully on $X$ by compact operators, i.e., $\J_X= \C$. The following is immediate.

\begin{corollary} \label{regular hyper}
A regular $\ca$-correspondence is necessarily hyperrigid.
\end{corollary}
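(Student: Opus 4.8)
The plan is to deduce this as a direct specialization of Theorem~\ref{thm:hyperrigid}, so the entire task reduces to verifying that the hypothesis ``$\phi_X(\J_X)$ acts non-degenerately on $X$'' is automatic in the regular case. First I would unwind the definition of regularity. By definition, $(X,\C)$ is regular precisely when $\C$ acts faithfully on $X$ by compact operators, that is, $\ker\phi_X=\{0\}$ and $\phi_X(\C)\subseteq \K(X)$. The first condition gives $\ker\phi_X^{\perp}=\C$, while the second gives $\phi_X^{-1}(\K(X))=\C$, so that Katsura's ideal satisfies
\[
\J_X=\phi_X^{-1}(\K(X))\cap \ker\phi_X^{\perp}=\C.
\]
In other words, regularity is exactly the statement $\J_X=\C$, as already recorded in the paragraph preceding the corollary.

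Having established $\J_X=\C$, I would then observe that $\phi_X(\J_X)=\phi_X(\C)$, and invoke the standing non-degeneracy assumption built into the very definition of a $\ca$-correspondence: $\phi_X$ is a non-degenerate $*$-homomorphism, which in the represented picture is precisely the requirement $\overline{\spn}\{\C X\}=X$. Thus $\phi_X(\J_X)=\phi_X(\C)$ acts non-degenerately on $X$, which is exactly the hypothesis needed to apply Theorem~\ref{thm:hyperrigid}. The conclusion that $(X,\C)$ is hyperrigid then follows immediately. There is essentially no obstacle in this argument; the only point requiring care is the bookkeeping of the two conditions defining regularity against the two factors in $\J_X$, together with the recognition that the non-degeneracy hypothesis of Theorem~\ref{thm:hyperrigid} is supplied for free by the non-degeneracy already assumed of every left action $\phi_X$. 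This is why the corollary can be asserted to be immediate.
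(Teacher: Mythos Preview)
Your argument is correct and is exactly what the paper has in mind: the text states that the corollary is immediate from Theorem~\ref{thm:hyperrigid}, using precisely the observation $\J_X=\C$ together with the standing non-degeneracy of $\phi_X$. You have simply spelled out the one-line verification that the paper left implicit.
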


We are about to see that the assumption of injectivity cannot be removed from the Corollary above.   But first we need criterion for the failure of hyperrigidity.

\begin{proposition} \label{prop;nonhyper}
Let $(X, \C)$ be a $\ca$-correspondence with $\J_X=\{0 \}$. Then $(X, \C)$ fails to be hyperrigid.
\end{proposition}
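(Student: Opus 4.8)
The plan is to exploit the fact that $\J_X=\{0\}$ makes the Cuntz--Pimsner covariance condition $\psi_t(\phi_X(c))=\rho(c)$, $c\in\J_X$, \emph{vacuous}, so that every isometric (Toeplitz) representation of $(X,\C)$ is automatically Cuntz--Pimsner covariant. Consequently $\O_X=\T_X$ and hence $\cenv(\T^+_X)=\O_X=\T_X$. By the definition of hyperrigidity it then suffices to produce a single $*$-representation $\tau$ of $\O_X$ and a completely positive, completely contractive map $\tau'\neq\tau$ agreeing with $\tau$ on $\T^+_X$; equivalently, to exhibit a $*$-representation of $\O_X$ whose restriction to $\T^+_X$ lacks the unique extension property.

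The representation I would use is the \emph{shifted Fock representation}. Realize a $*$-representation $(\rho,t)$ of $\O_X=\T_X$ on the Fock module $\F(X)=\bigoplus_{n\ge 0}X^{\otimes n}$, with $t(x)$ the creation operator and $\rho(c)$ the diagonal left action, and set $N=\bigoplus_{n\ge 1}X^{\otimes n}=\F(X)\ominus X^{\otimes 0}$, the orthogonal complement of the vacuum. Since the creation operators raise degree, $N$ is invariant for $\T^+_X$ but not reducing for $\O_X$ (the adjoints $t(x)^*$ carry the degree-one part into the vacuum). First I would check that $\big(\rho(\cdot)|_N,\,t(\cdot)|_N\big)$ is again an isometric Toeplitz representation of $(X,\C)$ on $N$: indeed $\big(t(x)|_N\big)^*\big(t(x')|_N\big)=P_N t(x)^*t(x')|_N=\rho(\sca{x,x'})|_N$, and the covariance relation restricts verbatim. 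Because $\J_X=\{0\}$ this representation is vacuously covariant, so it integrates to a genuine $*$-representation $\bar\sigma$ of $\O_X=\T_X$ on $N$. This is the crucial point at which the hypothesis is used: $\J_X=\{0\}$ is exactly what promotes a one-sided, non-maximal isometric representation to an honest representation of the C$^*$-envelope.

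Next I would compare $\bar\sigma$ with the compression $\tau':=P_N\,\rho(\cdot)\,|_N$ of the Fock representation to $N$. As $N$ is $\T^+_X$-invariant, the compression is multiplicative on $\T^+_X$ and there coincides with $\bar\sigma$, so $\bar\sigma$ and $\tau'$ are two completely positive, completely contractive extensions to $\O_X$ of the single representation $\bar\sigma|_{\T^+_X}$. They differ, however: on a generator $t(x)t(y)^*$ one computes
\[
\tau'\big(t(x)t(y)^{*}\big)-\bar\sigma\big(t(x)t(y)^{*}\big)=t(x)\,P_{0}\,t(y)^{*}\big|_{N},
\]
where $P_0$ is the projection onto the vacuum $X^{\otimes 0}$; this is precisely the ``creation out of the vacuum'' that the multiplicative map $\bar\sigma$ cannot see, whereas the compression $\tau'$ does. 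Evaluating the defect on a degree-one vector $\eta=y$ gives $x\,\sca{y,y}$, which is nonzero whenever $X\neq\{0\}$ and the left action is non-degenerate. Hence $\tau'\neq\bar\sigma$ while $\tau'|_{\T^+_X}=\bar\sigma|_{\T^+_X}$, the unique extension property fails for the $*$-representation $\bar\sigma$ of $\cenv(\T^+_X)$, and $\T^+_X$ is not hyperrigid.

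The genuine content to verify—the main, if modest, obstacle—is twofold: that $\big(\rho|_N,t|_N\big)$ is truly an isometric representation on $N$ (so that $\bar\sigma$ lands on the C$^*$-envelope), and that the vacuum defect above is nonzero. Both reduce to $X\neq\{0\}$ together with non-degeneracy of the left action. Conceptually, the argument is the exact converse of Theorem~\ref{thm:hyperrigid}: there the condition $\phi_X(\J_X)X=X$ forced $t(X)$ into the multiplicative domain of every extension, whereas here $\J_X=\{0\}$ removes that mechanism and allows the vacuum defect to survive in a second, non-multiplicative extension.
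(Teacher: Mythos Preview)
Your argument is correct and follows the same template as the paper's proof---produce a $*$-representation of $\O_X$ on a larger space, a $\T_X^+$-invariant but non-reducing subspace, and compare the compression with the (automatically covariant, since $\J_X=\{0\}$) restricted representation---but the concrete constructions differ. The paper starts from an isometric Hilbert-space representation $(\pi,t)$ of $(X,\C)$ and tensors with the unilateral and bilateral shifts $V_1,V_2$: the maps $x\mapsto t(x)\otimes V_i$, $a\mapsto\pi(a)\otimes I$ give $*$-representations $\phi_1,\phi_2$ of $\O_X$ on $\H\otimes\ell^2(\bbZ_0^+)$ and $\H\otimes\ell^2(\bbZ)$, and compressing $\phi_2$ to $\H\otimes\ell^2(\bbZ_0^+)$ yields a completely positive map $\tilde\phi_2\neq\phi_1$ agreeing with $\phi_1$ on $\T_X^+$. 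Your version works internally to the Fock module by deleting the vacuum. The shift construction is lighter (no Fock machinery needed) and lands directly on Hilbert spaces; yours makes the ``vacuum defect'' mechanism very explicit and ties nicely to the converse of Theorem~\ref{thm:hyperrigid}.

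One technical point to tidy up: hyperrigidity in this paper is stated for $*$-homomorphisms into $B(\H)$ with $\H$ a Hilbert space, whereas your $\bar\sigma$ and $\tau'$ take values in $\L(N)$ for the Hilbert $\C$-module $N$. The fix is routine---choose a faithful non-degenerate representation $\sigma:\C\to B(\K)$, pass to $\F(X)\otimes_\sigma\K\supseteq N\otimes_\sigma\K$, and rerun the identical computation; the defect vector $y\langle y,y\rangle$ survives. Note also that non-degeneracy of the left action is needed less for the defect to be nonzero (taking $x=y$ already gives $\langle y\langle y,y\rangle,\,y\langle y,y\rangle\rangle=\langle y,y\rangle^3\neq0$) than to ensure that $\bar\sigma$ is a \emph{non-degenerate} $*$-representation, which the paper's definition of hyperrigidity requires.
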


\begin{proof}
Let $(\pi, t)$ be any isometric representation of $(X, \C)$ on a Hilbert space $\H$. If $V_1, V_2$ are the unilateral and bilateral (forward) shift respectively, 
then the associations
\begin{equation} 
\begin{aligned}
\C \ni a &\longrightarrow a \otimes I, \\
X \ni x &\longrightarrow x \otimes V_i, \, i=1,2,
\end{aligned}
\end{equation}
determine isometric representations of $X$, which are neccesarilly Cuntz-Pimsner covariant, since $\J_X=\{0 \}$. Therefore they promote to representations $\phi_1$ and $\phi_2$ of $\O_X\simeq\cenv(\T_X^+)$ on $\H\otimes \ell^2(\bbZ^+_0)$ and $\H\otimes \ell^2(\bbZ)$ respectively. Now notice that when $\phi_2$ is being compressed on $\H\otimes \ell^2(\bbZ^+_0)$, it produces a completely positive contractive map $\tilde{\phi_2}\neq \phi_1$, which however agrees with $\phi_1$ on $\T_X^+$. Hence $(X, \C)$ is not hyperrigid.
\end{proof}

Recall that if $\alpha$ is an endomorphism of a $\ca$-algebra $\A$, then the semicrossed product  $\A\rtimes_{\alpha} \bbZ^+_0$ (also denoted as $\A\rtimes_{\alpha} \bbZ^+$ in the literature) is simply the tensor algebra of the $\ca$-correspondence $\A_{\alpha}$, where the left action on $\A$ is coming from $\alpha$. In the case where both $\A$ and $\alpha$ are unital and $\alpha$ is injective, such algebras are always hyperrigid. This has already been noted in the literature, eg. \cite{Kak}, but it is also an immediate consequence of Corollary~\ref{regular hyper}. It is worth noting that the requirement of $\alpha$ being injective cannot be dropped from neither Corollary \ref{regular hyper} nor the discussion above.

\begin{example} Let $\X$ be a compact Hausdorff space which is not a singleton and consider some $x \in X$ which is not an isolated point. Let $\phi: \X \rightarrow \X$ with $\phi(y) = x$, for all $y \in \X$. Then the semicrossed product $C(\X)\rtimes_{\phi}\bbZ^+_0$ is not hyperrigid.

Indeed, in that case, the kernel of the right action equals $C_0(\X\backslash \{x\})$. Hence Katsura's ideal is trivial and Proposition~\ref{prop;nonhyper} applies.
\end{example}

Finally recall that the C*-envelope of a non-unital operator algebra can be computed from the C*-envelope of its unitization. More precisely, as the pair $(\cenv(\A), \iota)$ where $\cenv(\A)$ is the C*-subalgebra generated by $\iota(\A)$ inside the C*-envelope $(\cenv(\A^1), \iota)$ of the (unique) unitization $\A^1$ of $\A$. By the proof of \cite[Proposition 4.3.5]{BlLM} this C*-envelope of an operator algebra $\A$ has the desired universal property, that for any C*-cover $(\iota', \B')$ of $\A$, there exists a (necessarily unique and surjective) $*$-homomorphism $\pi:\B' \to \cenv(\A)$, such that $\pi \circ \iota' = \iota$.

We start with an elementary result regarding crossed products.

\begin{lemma} \label{silly}
Let $(\C, \G, \alpha)$ be a $\ca$-dynamical system and let $\D\subseteq \C$ be the $\ca$-subalgebra of $\C$ generated by some selfadjoint approximate unit for $\C$. Then
\begin{itemize}
\item[(i)] $\C C_c(\G, \D)$ is dense in $\C \cpf$.
\item[(ii)] If $\pi : \C \rtimes_\alpha \G \rightarrow B(\H)$ is a non-degenerate representation, then its restriction on $ C_c(\G, \D)$ is also non-degenerate.
\end{itemize} 
\end{lemma}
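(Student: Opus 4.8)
The plan is to treat the two parts separately, using in each a concrete approximate-unit family built from the given self-adjoint approximate unit $\{e_\lambda\}\subseteq\D$. Throughout I identify $C_c(\G,\C)$ with a dense subalgebra of $\C\cpf$ and recall two standard facts: the inductive limit topology on $C_c(\G,\C)$ dominates the crossed product norm (since $\|f\|\le\int_\G\|f(s)\|\,ds$), so inductive-limit convergence forces convergence in $\C\cpf$; and the embedding $i_\C\colon\C\to M(\C\cpf)$ acts on $f\in C_c(\G,\C)$ by $(cf)(s)=c\,f(s)$.

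For (i) I would reduce to approximating elements of $C_c(\G,\C)$ in the inductive limit topology. It is standard that the elementary tensors $\varphi\otimes c$, given by $s\mapsto\varphi(s)c$ with $\varphi\in C_c(\G)$ and $c\in\C$, have dense linear span in $C_c(\G,\C)$ for that topology, so it suffices to approximate a single $\varphi\otimes c$. For this set $f_\lambda=\varphi\otimes e_\lambda\in C_c(\G,\D)$ and compute, via the module action above, that $c\cdot f_\lambda=\varphi\otimes(ce_\lambda)$. Since $\{e_\lambda\}$ is a two-sided approximate unit, $ce_\lambda\to c$, so $c\cdot f_\lambda\to\varphi\otimes c$ uniformly with supports inside the fixed compact set $\supp\varphi$, i.e.\ in the inductive limit topology. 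Hence every $\varphi\otimes c$ lies in the closure of $\C\,C_c(\G,\D)$, and passing to linear combinations gives the density claim.

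For (ii) I would exhibit an approximate unit for $\C\cpf$ lying entirely inside $C_c(\G,\D)$. Choose a neighborhood basis $\{V\}$ of the identity $e\in\G$ and, for each $V$, a function $\psi_V\in C_c(\G)$ with $\psi_V\ge 0$, $\supp\psi_V\subseteq V$ and $\int_\G\psi_V=1$; then put $u_{V,\lambda}(s)=\psi_V(s)\,e_\lambda$. Each $u_{V,\lambda}$ is $\D$-valued, hence lies in $C_c(\G,\D)$, and $\{u_{V,\lambda}\}$ is the customary approximate unit of the crossed product: concentrating $\psi_V$ at $e$ and using continuity of $t\mapsto\alpha_t(f(t^{-1}s))$ gives $u_{V,\lambda}*f\to e_\lambda f\to f$ in the inductive limit topology for every $f\in C_c(\G,\C)$, and symmetrically on the other side. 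Since $\pi$ is non-degenerate, $\pi(u_{V,\lambda})\to I$ in the strong operator topology, so $\pi(u_{V,\lambda})\xi\to\xi$ for all $\xi\in\H$; as each $u_{V,\lambda}\in C_c(\G,\D)$, this yields $\overline{\pi(C_c(\G,\D))\H}=\H$, i.e.\ the restriction is non-degenerate.

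The arguments are essentially routine, and the only real care is bookkeeping. In (i) one must use the left module action $(cf)(s)=cf(s)$ together with the right-hand relation $ce_\lambda\to c$; note that the right multiplier action $f\cdot i_\C(e_\lambda)$ would instead produce the twisted expression $f(s)\alpha_s(e_\lambda)$, which is not what we want, so the asymmetry matters. In (ii) the essential observation, and the main thing to verify, is that the standard crossed-product approximate unit can be taken with its $\C$-component equal to the generating approximate unit $\{e_\lambda\}$, so that the entire net falls inside $C_c(\G,\D)$; the passage from an approximate unit to strong convergence to $I$ under a non-degenerate representation is then automatic.
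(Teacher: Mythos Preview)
Your argument for (i) is correct and essentially identical to the paper's: both approximate an elementary tensor $\varphi\otimes c$ by $c\cdot(\varphi\otimes e_\lambda)=\varphi\otimes(ce_\lambda)$ using the left multiplier action of $\C$.

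For (ii) your proof is correct but follows a genuinely different route from the paper. The paper derives (ii) directly from (i): taking adjoints in (i) (which is licit because $\D$ is self-adjoint) shows that $C_c(\G,\D)\,\C$ is also dense in $\C\cpf$, and then, extending $\pi$ non-degenerately to multipliers so that $\pi(\C)\H=\H$, one gets
\[
\overline{\pi(C_c(\G,\D))\H}\supseteq \overline{\pi(C_c(\G,\D))\pi(\C)\H}=\overline{\pi(C_c(\G,\D)\,\C)\H}=\H.
\]
Your approach instead bypasses (i) entirely by observing that the standard crossed-product approximate unit $\psi_V\otimes e_\lambda$ already lives in $C_c(\G,\D)$, and then invoking the general fact that non-degenerate representations send bounded approximate units strongly to the identity. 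Your route is more self-contained and does not require the multiplier extension; the paper's route is shorter once (i) is in hand and makes transparent that (ii) is really a formal consequence of (i).
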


\begin{proof}
Let $\{ e_i\}_{i \in \bbI}$ be the selfadjoint approximate unit generating $\D$. Then any elementary tensor $h \otimes c \in C_c(\G, \D)$, where $(h \otimes c )(s) = h(s)c$, $h \in \rC_c(\G)$, $c \in \C$, can be written as 
\[
h \otimes c = \lim_{i \in \bbI} c \left(h \otimes e_i \right)\in \overline{\C C_c(\G, \D)}.
\]
This implies (i). For (ii) notice that by taking adjoints in (i), $ C_c(\G, \D)\C$ is also dense in $\C \cpf$. Hence 
\[
\pi\left( C_c(\G, \D) \right)\H = \pi\left( C_c(\G, \D) \right)\pi(\C)\H=\pi\left( C_c(\G, \D)\C \right)\H 
\]
which is dense in $\pi(\C)\H= \H$ and the conclusion follows.
\end{proof}

Our next result has been established for all discrete groups in \cite{Kat}. Here we extend it to arbitrary locally compact groups provided that the pertinent algebras are hyperrigid. One of the key ingredients of the proof is the use of injectivity for operator spaces. We briefly review the key definitions and the results used in the proof. We follow \cite{Paulsen} in our presentation; most of the material first appeared in \cite{Hamana}.

An operator space $I$ is said to be injective provided that for any pair of operator spaces $E \subseteq F$ and completely contractive map $\phi : E \rightarrow I$, there exists a completely contractive map $\psi : F \rightarrow I$ that extends $\phi$. 

Given an operator space $F$, we say that $(E, \kappa)$ is an injective envelope of $F$ provided that
\begin{itemize}
\item[(i)] $E$ is injective,
\item[(ii)] $\kappa : F \rightarrow E$ is a complete isometry,
\item[(iii)] if $E_1$ is injective with $\kappa (F)\subseteq E_1 \subseteq E$, then $E_1=E$.
\end{itemize}

Hamana essentially showed that every operator space $F\subseteq B(\H)$ admits an injective envelope $ (E, \kappa)$, with $E \subseteq B(\H)$ and $\kappa $ being the inclusion map \cite[Theorem 15.4]{Paulsen}.  The proof of \cite[Theorem 15.4]{Paulsen} shows that $E$ materializes as the range of a completely contractive idempotent $\phi: B(\H) \rightarrow B(\H)$. If $B(\H) \ni I \in F$ then the completely contractive idempotent $\phi$ is unital and therefore completely positive. Hence the range of $\phi$, i.e., $E$, is an operator system and the Choi-Effros Theorem \cite[Theorem 15.2]{Paulsen} applies: setting $a\circ b =\phi(ab)$ defines a multiplication on $E = \phi(B(\H))$, and $E$ equipped with this multiplication and its usual $*$-operation becomes a $\ca$-algebra. If on top of beiing unital, $F$ happens to be an operator algebra as well, then the $\ca$-subalgebra of $(E, \circ)$ generated by $F$, gives the $\ca$-envelope of $F$ \cite[Theorem 15.16]{Paulsen}.

\begin{theorem} \label{hyperenv}
Let $\A$ be a hyperrigid operator algebra which possesses a contractive approximate unit $\{e_i\}_{i \in \bbI}$ consisting of selfadjoint operators. Let $\alpha \colon \G \rightarrow \Aut \A$ be a continuous action of a locally compact group. Then 
\begin{equation} \label{redequiv}
\cenv\left( \A \cpr \right)\simeq \cenv(\A) \cpr 
\end{equation}
and
 \begin{equation} \label{fullequiv}
 \cenv\big(\A\rtimes_{\cenv(\A), \alpha} \G \big) \simeq \cenv(\A) \cpf
 \end{equation}
 via canonical embeddings.
\end{theorem}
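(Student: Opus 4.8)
The plan is to realize each right-hand side as a concrete $\ca$-cover of the corresponding non-selfadjoint crossed product and then to show that this cover is already the $\ca$-envelope. First I would fix a faithful representation $\cenv(\A)\subseteq B(\K)$ and realize all the crossed products on $L^2(\G,\K)$, so that $\A\cpr\subseteq\cenv(\A)\cpr$ and $\A\rtimes_{\cenv(\A),\alpha}\G\subseteq\cenv(\A)\cpf$ are the closures of $C_c(\G,\A)$. Using $\cenv(\A)=\mathrm{C}^*(\A)$ together with Lemma~\ref{silly} and the selfadjoint approximate unit $\{e_i\}$, I would check that $C_c(\G,\A)$ generates the ambient $\ca$-algebra, so that $(\cenv(\A)\cpr,\iota)$ and $(\cenv(\A)\cpf,\iota)$ are genuine $\ca$-covers. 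By the universal property of the $\ca$-envelope these covers admit canonical surjections $\cenv(\A)\cpr\to\cenv(\A\cpr)$ and $\cenv(\A)\cpf\to\cenv(\A\rtimes_{\cenv(\A),\alpha}\G)$ restricting to the identity on $C_c(\G,\A)$; the entire problem is to prove that each such surjection is \emph{injective}, i.e. that the associated Shilov boundary ideal vanishes.

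To establish injectivity I would invoke Hamana's machinery recalled above. After passing to unitizations (legitimate by the remark preceding Lemma~\ref{silly} and controlled by Lemma~\ref{silly}), represent $\A\cpr$ on a Hilbert space containing its unit, and let $\phi$ be a minimal $\A\cpr$-projection, so that $E:=\phi(B(\H))$ is the injective envelope and $\cenv(\A\cpr)$ is the $\ca$-subalgebra of $(E,\circ)$ generated by $\A\cpr$. Since $\cenv(\A)\cpr$ is itself a $\ca$-algebra generated by $\A\cpr$, injectivity of the canonical surjection would follow once we show that the minimal projection $\phi$ is multiplicative on, equivalently fixes, the concretely represented $\cenv(\A)\cpr$: for then the Choi--Effros product $\circ$ agrees with the given one and $\cenv(\A)\cpr$ is exactly the $\ca$-subalgebra of $(E,\circ)$ generated by $\A\cpr$, namely $\cenv(\A\cpr)$. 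This is precisely a hyperrigidity assertion, but now for $\A\cpr$ rather than for $\A$.

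The decisive step is therefore to transport the hyperrigidity of $\A$ across the crossed product. The obstruction is that, for non-discrete $\G$, the algebra $\A$ does not embed in $\A\cpr$, so one cannot simply restrict $\phi$ to $\A$. Here I would use that the action $\alpha$ extends from $\cenv(\A)$ to the injective envelope $I(\A)$ by rigidity, producing a $\ca$-dynamical system $(I(\A),\G,\tilde\alpha)$ with $\cenv(\A)\subseteq I(\A)$ equivariantly, and I would combine this with the faithful conditional expectation $\Xi\colon\cenv(\A)\cpr\to\cenv(\A)$ available for the reduced crossed product. Using $\Xi$ one reduces the multiplicativity of $\phi$ on $\cenv(\A)\cpr$ to a multiplicativity statement at the level of the coefficient algebra $\cenv(\A)$, where the hyperrigidity of $\A$ applies through a multiplicative-domain and Schwarz-inequality argument identical in spirit to the proof of Theorem~\ref{thm:hyperrigid}. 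Faithfulness of $\Xi$ is what upgrades the resulting fibrewise equalities to an honest multiplicativity statement, hence to a trivial boundary ideal, proving \eqref{redequiv}.

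For the full crossed product \eqref{fullequiv} I would keep the same injective-envelope reduction, replacing the reduced picture by the relative full crossed product, which by definition sits inside $\cenv(\A)\cpf$, and again aiming to show that the minimal projection is multiplicative on $\cenv(\A)\cpf$. The surjection $\cenv(\A)\cpf\to\cenv(\A)\cpr$, together with \eqref{redequiv} and the fact (Williams, \cite[Proposition 3.19]{Williams}) that full crossed products carry $\tilde\alpha$-invariant ideals to ideals, is what one hopes to leverage to locate the boundary ideal. The main obstacle, and the reason the approximate-unit and injective-envelope hypotheses are doing real work, is exactly this transport of hyperrigidity past the crossed product in the \emph{absence of a faithful conditional expectation}: whereas the reduced case is anchored by faithfulness of $\Xi$, the full case must instead exploit the equivariant injective envelope $I(\A)$ and the universal property of $\cmax$ to rule out a nonzero boundary ideal. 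I expect that verifying that the Choi--Effros product coincides with the crossed-product multiplication on all of $\cenv(\A)\cpf$, and not merely on $C_c(\G,\A)$, will be the most delicate point, and that it is precisely here that hyperrigidity of $\A$, as opposed to mere rigidity of the injective envelope, is indispensable.
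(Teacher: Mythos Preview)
Your setup (concrete $\ca$-covers, Hamana idempotent $\phi$, reduction to showing $\phi$ fixes $\cenv(\A)\cpr$) matches the paper. But the decisive step is handled quite differently, and your version has a real gap.

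You treat the non-embedding of $\A$ in $\A\cpr$ as an obstruction to be circumvented via the faithful expectation $\Xi$ and an equivariant extension of $\alpha$ to $I(\A)$. The paper avoids all of this by exploiting a feature of the concrete picture that you overlook: in the regular (or universal) covariant representation $(\tilde\rho,u)$ on $\H\otimes L^2(\G)$, the $*$-homomorphism $\tilde\rho\colon\cenv(\A)\to B(\H\otimes L^2(\G))$ is available \emph{alongside} $\pi(\A\cpr)$, even though $\tilde\rho(\A)\not\subseteq\pi(\A\cpr)$. Since $\phi$ is defined on all of $B(\H\otimes L^2(\G))$, one may apply it to $\tilde\rho(a)$. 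The selfadjoint algebra $\D$ generated by $\{e_i\}$ gives $C_c(\G,\D)\subseteq\A\cpr$, hence $\pi(C_c(\G,\D))$ lies in the multiplicative domain of $\phi$; combining $\tilde\rho(a)\pi(f)=\pi(af)\in\pi(\A\cpr)$ with non-degeneracy of $\pi(C_c(\G,\D))$ (Lemma~\ref{silly}(ii)) forces $\phi\circ\tilde\rho=\tilde\rho$ on $\A$. Now hyperrigidity applies \emph{directly} to the $*$-homomorphism $\tilde\rho$, yielding $\phi\circ\tilde\rho=\tilde\rho$ on all of $\cenv(\A)$, and Lemma~\ref{silly}(i) then shows $\phi$ fixes $\pi(\cenv(\A)\cpr)$.

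Your route never isolates a $*$-homomorphism of $\cenv(\A)$ to which hyperrigidity can be applied; the claim that $\Xi$ ``reduces multiplicativity of $\phi$ to the coefficient algebra'' is not substantiated, and for \eqref{fullequiv} you concede the absence of a faithful expectation leaves the argument incomplete. In the paper the full case is \emph{identical} to the reduced one --- simply replace the regular covariant pair by the universal one --- precisely because no expectation is ever used. The missing idea is that the covariant pair $(\tilde\rho,u)$ already supplies the needed embedding of $\cenv(\A)$ into the ambient $B(\H\otimes L^2(\G))$.
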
 
\begin{proof}

Let $\rho \colon \cenv(\A) \rightarrow B(\H)$ be a faithful (non-degenerate) representation and let 
\begin{align*}
\tilde{\rho}  &\colon \cenv(\A) \longrightarrow B\big(\H \otimes L^2(\G)\big)\\
u &\colon\G \longrightarrow B\big(\H \otimes L^2(\G)\big)
\end{align*}
so that $\tilde{\rho} \rtimes u$ (which we will denote as $\pi$) is the regular representation induced by $\rho$. (See \cite[Section 2.2]{Williams} for notation and additional information.) Since $\rho$ is non-degenerate,  \cite[Lemma 2.17]{Williams} implies that the induced representation $\pi= \tilde{\rho} \rtimes u$ is also non-degenerate. 

Let 
\[
\phi \colon B\big(\H \otimes L^2(\G)\big) \longrightarrow B\big(\H \otimes L^2(\G)\big)
\]
be a completely contractive idempotent map whose range is the injective envelope of $\pi\left(\A \cpr\right)^1$.  Let $\D$ be the closed (selfadjoint) subalgebra of $\A$ generated by $\{e_i\}_{i \in \bbI}$. Then $C_c(\G, \D)$ is a selfadjoint subalgebra of $\A\cpr$ and so \cite[1.3.12]{BlLM} implies that 
\begin{equation} \label{doubleuse}
\phi\left(S\pi(f)\right) = \phi(S)\pi(f), 
\end{equation}
for any $S \in B\big(\H \otimes L^2(\G)\big)$ and $f  \in C_c(\G, \D)$. In particular 
\[
\phi\big (\tilde{\rho}(a)\pi(f)\big) = \phi(\tilde{\rho}(a))\pi(f), 
\]
for all $a \in \A$, $f \in C_c(\G, \D)$. On the other hand, 
\[
\tilde{\rho}(a)\pi(f) = \pi( af)\in \pi(\A \cpr)
\]
and so 
$\phi (\tilde{\rho}(a)\pi(f) ) = \tilde{\rho}(a)\pi(f)$, $a \in \A$, $f \in C_c(\G, \D)$. Hence 
\[
\big( \phi(\tilde{\rho}(a)) - \tilde{\rho}(a)\big) \pi(f) = 0, \mbox{ for all }f \in C_c(\G, \D). 
\]
By Lemma \ref{silly}(ii), $\pi \big( C_c(\G, \D)\big)$ acts non-degenerately on $\H \otimes L^2(\G)$ and so
\begin{equation} \label{identitymap}
\phi(\tilde{\rho}(a)) = \tilde{\rho}(a), \mbox{ for all } a \in \A.
\end{equation}
Hence the mapping $\phi$ is a completely positive and completely contractive extension of the identity map on $\tilde{\rho}(\A)$. However $\tilde{\rho}(\A)$ is hyperrigid and according to the discussion in the beginning of the section, the identity map on $\tilde{\rho}(\A)$ is the only such completely contractive and completely positive extension to $\cenv(\tilde{\rho}(\A)) = \tilde{\rho}\big(\cenv(\A)\big)$. Therefore 
\begin{equation*} 
\phi(\tilde{\rho}(c)) = \tilde{\rho}(c), \mbox{ for all }c \in \cenv(\A). 
\end{equation*}
Appealing again to (\ref{doubleuse}), with $S = \tilde{\rho}(c)$, we obtain
\[
\phi(\pi(cf))= \phi\big (\tilde{\rho}(c)\pi(f)\big) = \phi(\tilde{\rho}(c))\pi(f)= \tilde{\rho}(c)\pi(f) = \pi(cf), 
\]
for all $c \in \cenv(\A)$ and $f \in C_c(\G, \D)$. By Lemma \ref{silly}(i) we have 
\[
\phi(S) = S, \mbox{ for all } S \in \pi\big(\cenv(\A) \cpr\big).
\]
 But this implies that the Choi-Effros multiplication on  $$\pi\big(\cenv(\A) \cpr\big)^1 \subseteq \phi\big(B\big(\H \otimes L^2(\G)\big)\big)$$ is actually the original one coming from $B\big(\H \otimes L^2(\G)\big)$ and so the $\ca$-algebra generated by $$\pi (\A \cpr)^1 \subseteq \phi\big(B\big(\H \otimes L^2(\G)\big)\big)$$ equals $\pi \big(\cenv(\A) \cpr\big)^1$. Hence 
 \[
 \cenv\left(\pi (\A  \cpr )^1\right) = \pi \big(\cenv(\A) \cpr\big)^1. 
 \]
Furthermore, the $\ca$-algebra generated by $\pi(\A \cpr) \subseteq \pi \big(\cenv(\A) \cpr\big)^1$ equals $\pi \big(\cenv(\A) \cpr\big)$. This establishes (\ref{redequiv}).

In order to prove (\ref{fullequiv}), let this time $\pi := \tilde{\rho}\rtimes u$, where $(\tilde{\rho},  u)$ is the universal covariant representation of $(\cenv(\A), \G, \alpha)$. With this $\pi$, a verbatim repetition of the proof of (\ref{redequiv}) establishes (\ref{fullequiv}).
 \end{proof}
 
 Now we turn to crossed product correspondences. Let $\G$ be a locally compact group acting on a non-degenerate $\ca$-correspondence $(X,\C)$ by a generalized gauge action $\alpha : \G \rightarrow \Aut(\T_X)$, i.e., $\alpha_s(X) = X$ and $\alpha_s(\C) = \C$, for all $g \in \G$. The reduced crossed product correspondence $(X\cpr, \C\cpr)$ is the completion of $C_c(\G, X)$ and $C_c(\G, \C)$ in $\T_X^+ \cpr$, which can be thought of as living in $\T^+_X \cpr$ but equivalently can be considered as living in $\O_X \cpr$. The left and right module actions are given by multiplication and $\langle S,T\rangle = S^*T$, for $S,T\in C_c(\G, X)$.
 
 In a similar manner, one defines the full crossed product correspondence $(X\cpf, \C \cpf)$ by completing the spaces in $\T_X \cpf$. This was shown to be unitarily equivalent to the abstract characterization of the full crossed product correspondence in \cite[Remark 7.8]{KatRamMem}. Lastly, we recall the definition of the crossed product correspondence $(X \cpd, \C \cpd)$ which is the completion of the spaces in $\O_X \cpf$. In general, it is unknown whether these two correspondences are unitarily equivalent or not. 

Our next result has been established in \cite{KatRamMem} in the case where $\G$ is discrete. In \cite{KatRamMem} it was also noted that the proof carries over to the general locally compact case. We are about to explain how this is done. In the proof we will use the language of product systems, which we now discuss briefly. 

Let $G$ be a countable group with unit $e \in G$ and let $P \subseteq G$ be a positive cone. A product system $X = \{ X_p\}_{p \in P}$ over $(G, P)$ consists of a $\ca$-algebra $X_e \subseteq B(\K)$, $\K$ a Hilbert space, and a family of (represented) $X_e$-correspondences $X_p \subseteq B(\K)$, $p \in P\backslash\{e\}$, satisfying the semigroup rule $\overline{\spn}\{X_pX_q\}=X_{pq}$, for all $p,q \in P$. For instance, if $(X, \C)$ is a (represented) $\ca$-correspondence, then by taking $G=\bbZ$, $P = \bbZ^+_0$, $X_0=\C$ and $X_n =\overline{\spn}\{X^n\}$, $n =1, 2, \dots$, we obtain a product system $\{X_n\}_{n=0}^{\infty}$ over $(\bbZ, \bbZ^+_0)$. As with $\ca$-correspondences, one can define product systems abstractly but we will not do that here. See \cite{F} for more details.

If $X = \{ X_p\}_{p \in P}$ is a poduct system over $(G, P)$, then an isometric (Toeplitz) representation $\psi=\{\psi_p\}_{p \in P}$ of $X$ on a Hilbert space $\H$ consists of a $*$-representation $\psi_e: \X_e\rightarrow B(\H)$ and isometric $\ca$-correspondence representations $\psi_p :X_p \rightarrow B(\H)$, $p \in P$, so that 
$\psi_{pq}(x_p x_q)= \psi_p(x_p)\psi_q(x_q)$ for all $x_p\in X_p$, $x_q \in X_q$, $p,q\in P$. If $(\rho, t)$ is a representation of a $\ca$-correspondence $(X, \C)$ on $\H$, then by setting $\psi_0=\rho$ and 
\[
\psi_n: X_n \longrightarrow B(\H); x_1 x_2\dots x_n\longmapsto t(x_1)t(x_2)\dots t(x_n)
\]
we obtain a representation of the product system $\{X_n\}_{n=0}^{\infty}$ discussed in the previous paragraph. These representations are exactly the compactly aligned, Nica covariant representations of $\{X_n\}_{n=0}^{\infty}$ in the language of \cite{F} and so the Toeplitz $\ca$-algebra of $\{X_n\}_{n=0}^{\infty}$ generated by the theory of \cite{F} coincides with our $\T_X$. 

The use of the product system language in the proof  of the next result allows us to import an important result from the theory of product systems, Fowler's Theorem~\cite[Theorem 7.2]{F}. This result has no analogue within the theory of $\ca$-correspondences as it allows us to check whether a representation of the Toeplitz algebra of a $\ca$-correspondence is faithful without using gauge actions. 

\begin{theorem} \label{HNtensor2}
Let $\G$ be a locally compact group acting by a generalized gauge action $\alpha$ on a non-degenerate $\ca$-correspondence $(X, \C)$. Then
\[
\T^+_{X} \cpr \simeq \T^+_{X\cpr}.
\]
Therefore, $$\cenv\big (\T^+_{X} \cpr \big) \simeq  \O_{X\cpr}.$$
\end{theorem}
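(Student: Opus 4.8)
The plan is to realize both sides as (the non-selfadjoint parts of) Toeplitz algebras and to identify those Toeplitz algebras using the product-system machinery recalled above. Throughout I regard $\T_X$ as the Toeplitz algebra of the product system $\{X_n\}_{n\in\bbZ^+_0}$ over $(\bbZ,\bbZ^+_0)$, with $X_0=\C$ and $X_n=\overline{\spn}\{X^n\}$. Since $\alpha$ is a generalized gauge action it preserves each fibre $X_n$, so it induces a $\G$-action on the entire product system, and one obtains a crossed-product product system $\{X_n\cpr\}_{n\in\bbZ^+_0}$ whose $n$-th fibre is the reduced crossed-product correspondence of $X_n$. The first thing I would record is the identification $(X\cpr)^{\otimes n}\simeq X_n\cpr$ over $\C\cpr$, so that $\{X_n\cpr\}$ is precisely the product system attached to the single correspondence $(X\cpr,\C\cpr)$; consequently its Toeplitz algebra is $\T_{X\cpr}$ and its tensor algebra is $\T^+_{X\cpr}$.

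Next I would construct the comparison map. The inclusions $C_c(\G,X)\subseteq X\cpr$ and $C_c(\G,\C)\subseteq\C\cpr$ sitting inside $\T_X\cpr$ constitute an isometric (Toeplitz) representation of $(X\cpr,\C\cpr)$: the module identities hold by the multiplicative structure of the crossed product, while the inner-product identity $\langle S,T\rangle=S^*T$ is built into the definition of the correspondence. Being a correspondence representation, it gives a compactly aligned, Nica-covariant representation of $\{X_n\cpr\}$, whose integrated form is a surjective $*$-homomorphism $\T_{X\cpr}\to\T_X\cpr$; surjectivity holds because the convolution products of $C_c(\G,X)$ and $C_c(\G,\C)$ are dense in $C_c(\G,X_n)$ for every $n$, and $\bigcup_n C_c(\G,X_n)$ is dense in $\T_X\cpr$. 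Restricting this $*$-homomorphism to the non-selfadjoint subalgebra generated by $C_c(\G,X)$ and $C_c(\G,\C)$ recovers the canonical completely contractive surjection $\Phi\colon\T^+_{X\cpr}\to\T^+_X\cpr$ furnished by the universal property of the tensor algebra.

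The crux is to prove that the $*$-homomorphism $\T_{X\cpr}\to\T_X\cpr$ is injective; granting this, it is a $\ca$-isomorphism carrying generators to generators, so $\Phi$ is a complete isometry and $\T^+_X\cpr\simeq\T^+_{X\cpr}$. Injectivity is exactly where the product-system language pays off: instead of constructing and exploiting a gauge action on the reduced crossed product, I would invoke Fowler's Theorem~\cite[Theorem 7.2]{F}, which certifies that a Nica-covariant Toeplitz representation of a compactly aligned product system integrates to a faithful representation of the Toeplitz algebra as soon as it is faithful on the coefficient algebra. Here the coefficient condition reduces to faithfulness of the inclusion $\C\cpr\hookrightarrow\T_X\cpr$, which is immediate since reduced crossed products preserve injective inclusions. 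I expect the main obstacle to be this injectivity step: one must confirm that the concrete representation of $\{X_n\cpr\}$ on the \emph{reduced} crossed product $\T_X\cpr$ genuinely meets Fowler's hypotheses and that faithfulness survives the passage to $\cpr$, precisely because no gauge action is being used to separate the spectral subspaces; the locally compact setting adds to this only the density and integration arguments over $C_c(\G,\cdot)$ that replace the direct Fock-space computation available when $\G$ is discrete.

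Finally, the displayed consequence is formal. Since $\cenv(\T^+_Y)\simeq\O_Y$ for every non-degenerate $\ca$-correspondence $Y$ by Katsoulis and Kribs~\cite{KatsoulisKribsJFA}, and the C$^*$-envelope is an invariant of completely isometric isomorphism, the isomorphism $\T^+_X\cpr\simeq\T^+_{X\cpr}$ yields $\cenv\big(\T^+_X\cpr\big)\simeq\cenv\big(\T^+_{X\cpr}\big)\simeq\O_{X\cpr}$, as asserted.
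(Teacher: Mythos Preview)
Your overall architecture is right and matches the paper's: realize both sides via the product system $\{X_n\}$, produce a $*$-homomorphism $\T_{X\cpr}\to\T_X\cpr$ by integrating the tautological isometric representation of $(X\cpr,\C\cpr)$ inside $\T_X\cpr$, and prove it is faithful via Fowler's Theorem rather than via a gauge action. The final paragraph is also fine.

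The genuine gap is your statement of Fowler's Theorem. Theorem~7.2 of \cite{F} does \emph{not} say that a Nica-covariant Toeplitz representation is faithful on $\T_{X\cpr}$ as soon as it is faithful on the coefficient algebra $\C\cpr$. The hypothesis is strictly stronger: one must show that for every finite $F\subseteq\bbZ^+_0\setminus\{0\}$,
\[
\Big\|\psi_0(f)\prod_{n\in F}(I-P_n^{\psi})\Big\|=\|\psi_0(f)\|,\qquad f\in\C\cpr,
\]
where $P_n^{\psi}$ is the range projection of $\psi_n(X_n\cpr)$. Mere injectivity of $\C\cpr\hookrightarrow\T_X\cpr$ does not give this: the issue is exactly whether $\psi_0$ stays faithful after compressing to the complement of the higher fibres. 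If you try to verify this directly for the abstract inclusion into $\T_X\cpr$, you are essentially forced to reproduce a Fock/vacuum argument, and in the non-discrete case one cannot simply restrict to a vacuum vector.

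This is precisely why the paper does not work with an arbitrary faithful copy of $\T_X\cpr$. It first builds a specific faithful representation $\pi$ of $\T_X$ on $\H\otimes\ell^2(\bbZ^+_0)$ by twisting a faithful $\rho$ with the forward shift $V$, so that the rank-one projection $Q_0$ onto the zero coordinate is annihilated by every $\pi(X_n)$, $n\geq 1$, while $\pi|_\C$ is a direct sum of copies of $(I\otimes Q_0)\pi|_\C$. Passing to the induced regular representation $\Ind_\pi$ carries this structure to $\T_X\cpr$: the constant projection $\hat{Q}_0$ is dominated by each $I-P_n^{\psi}$, and an elementary comparison of $\Ind_\pi|_{\C\cpr}$ with $\hat{Q}_0\Ind_\pi|_{\C\cpr}$ then yields the displayed norm equality. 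That computation---not the faithfulness of $\C\cpr\hookrightarrow\T_X\cpr$---is the substance of the injectivity step, and it is missing from your proposal.
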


\begin{proof} Let $\rho \colon \cT_X \rightarrow B(\H)$ be some faithful $*$-representation and let $V \in B(\ell^2(\bbZ^+_0))$ be the forward shift.  As we did earlier, set $X_n :=X^n$ for $n\geq 1$, $X_0:= \C$ and for the rest of the proof let $X$ denote the product system $\{X_n\}_{n =0}^{\infty}$. Then the map
\[
X_n\ni x \longmapsto \rho(x) \otimes V^n \in B(\H \otimes l^2(\bbZ^+_0)), \quad n\in \bbZ^+_0,
\]
is a representation of $X$ that satisfies the requirements of Fowler's Theorem~\cite[Theorem 7.2]{F}. Therefore it establishes a faithful representation $\pi : \cT_X \rightarrow B(\H \otimes \ell^2(\bbZ^+_0))$.

Since $X\cpr \subseteq \cT_X \cpr$, we may consider the regular representation $\Ind_{\pi}$, when restricted on $X\cpr$, as a representation of the product system $X\cpr$, which we denote as $\psi$. We furthermore write $\psi_n := \psi\mid_{X_n\cpr}$, $n\in \bbZ^+_0 $.

We claim that $\psi$ satisfies the requirements of Fowler's Theorem~\cite[Theorem 7.2]{F}. Indeed let $Q_n\in l^2(\bbZ^+_0)$ be the projection on the one dimensional subspace corresponding to the characteristic function of $n \in \bbZ^+_0$ and let $\hat{Q}_n \equiv (I\otimes Q_{n})\otimes I \in B\big( \H \otimes l^2(\bbZ^+_0)\otimes L^2(\G)\big)$ be the (constant) $B(\H \otimes l^2(\bbZ^+_0))$-valued function that assigns the value $I\otimes Q_{n}$ to any $s \in \G$. Then given any $f \in C_c(\G, X_n)$, $n \geq1$, we have 
\begin{align*}
\big(\hat{Q}_0 \Ind_{\pi} f \big)h(t) &= \hat{Q}_0 \int_{\G} \pi\big(\alpha_{t}^{-1}(f(s))\big)h(s^{-1}t)ds\\
&=(I\otimes Q_{0})\otimes I \int_{\G} \big(\rho(\alpha_{t}^{-1}(f(s))) \otimes V^n\big)h(s^{-1}t)ds\\
&=\int_{\G} \big(\rho(\alpha_{t}^{-1}(f(s))) \otimes Q_0V^n\big)h(s^{-1}t)ds = 0
\end{align*}
Therefore if $P_n^{\psi}$ denotes the range space of $\psi(X_n\cpr)$, $ n \in \bbZ^+_0$, then the product $\prod_{n\in F\backslash\{0\}} (I - P_n^{\psi})$, which in our case collapses to a single factor of the form $I -P_n^{\psi}$, always dominates $\hat{Q}_0$ and so 
\begin{equation} \label{normeq}
\big\|\psi_0(f) \prod_{n\in F\backslash\{0\}} (I- P_n^{\psi})\big\| \geq\big\| \psi_0(f) \hat{Q}_0\big\| =\big\|\Ind_{\pi}(f)\hat{Q}_0 \big\|, 
\end{equation}
for any $f \in \C_c(\G, \C)$. However, each $I\otimes Q_n$ reduces $\pi\mid_{\C}$ and therefore $$\pi \mid_{\C}\simeq \big(\oplus_{n\in \bbZ^+_0}(I \otimes Q_n) \pi \big) \mid_{\C} \simeq \big(\oplus (I \otimes Q_0) \pi \big)\mid_{\C}, $$ i.e., the restriction of $\pi$ on $\C$ is unitarily equivalent to a direct sum of countably many copies of $(I \otimes Q_0) \pi $ restricted on $\C$. From this we obtain,
\[
\psi_0 = {\Ind_{\pi}}\mid _{\C \cpr} \ \ \simeq \ \oplus {\Ind_{ Q_0\pi}}\mid _{\C \cpr} \ \ \simeq \ \oplus {\hat{Q}_0\Ind_{\pi }}\mid _{\C \cpr}. 
\]
Combining the above with (\ref{normeq}) we now obtain 
\[
\big\|\psi_0(f) \prod_{n\in F\backslash\{0\}} (I - P_n^{\psi})\big\| =\big\|\Ind_{\pi}(f)\hat{Q}_0\big\| = \|\psi_0(f) \|
\]
for any $f \in \C_c(\G, \C)$, which establishes the claim.

Since the claim is valid, Fowler's Theorem shows now that the induced representation $\psi_{*}$ is a faithful representation of $\T_{X\cpr}$. Note now that $\psi_{*}(\T^+_{X\cpr}) \simeq \T^+_{X\cpr}$ is equal to the closed linear span of
\[
\psi_{*}(X\cpr)=\bigcup_{n \in \bbZ^+_0} \psi_n(X_n\cpr)=\bigcup_{n \in \bbZ^+_0}\overline{\Ind_{\pi}C_c(\G, X_n)}.
\]
However, $\T^+_X\cpr$ is also isomorphic to the closed linear span of  $$\bigcup_{n \in \bbZ^+_0}\overline{\Ind_{\pi}C_c(\G, X_n)}.$$ Hence, $\cT^{+}_X \cpr \simeq \cT^+_{X\cpr}$.

Finally $$\cenv\big (\cT^+_X \cpr \big) \simeq  \cenv(\cT^+_{X\cpr}) \simeq \cO_{X\cpr}.$$ 
with the last identification following from \cite[Theorem 3.7]{KatsoulisKribsJFA}.
\end{proof}

\begin{remark}
The use of the language of product systems in the previous proof has an additional benefit. By switching from $(\bbZ, \bbZ^+_0)$ to an arbitrary totally ordered group $(G, P)$, the same exact proof as above establishes the more general result $\cN\T^+_{X} \cpr \simeq \cN\T^+_{X\cpr}$, where $\N\T^+_X$ denotes the Nica tensor algebra of $X$. Actually the proof works for any quasi-lattice ordered group  $(G,P)$ provided that certain issues involving compact alignment and Nica covariance are worked out first. We are recording this fact here for future reference. 
\end{remark}

In \cite{Kat} the Hao-Ng isomorphism problem was resolved for the reduced crossed product and all discrete groups. In the next result we address the case of an arbitrary locally compact group and we resolve the Hao-Ng problem for the reduced crossed product provided that the $\ca$-correspondence is hyperrigid. Note that our result subsumes an earlier result \cite[Proposition 5.5]{Kim} which was posted on the arXiv but has not appeared in print.

\begin{theorem} \label{HaoNgreduced}
Let $\G$ be a locally compact group acting by a generalized gauge action $\alpha$ on a non-degenerate hyperrigid $\ca$-correspondence $(X, \C)$, e.g. $\phi_X(\J_X)X = X$. Then
\[
\O_X \cpr \simeq\O_{X\cpr}.
\]
  \end{theorem}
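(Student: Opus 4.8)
The plan is to recognize that, since $\cenv(\T^+_X)\simeq \O_X$, both algebras appearing in the statement are realizations of the C$^*$-envelope of one and the same non-selfadjoint object, namely $\T^+_X \cpr$. Thus the result will follow by feeding the tensor algebra $\A := \T^+_X$ into Theorem~\ref{hyperenv} on the one hand, and invoking Theorem~\ref{HNtensor2} on the other. No new analysis is needed beyond checking that $\T^+_X$ meets the hypotheses of Theorem~\ref{hyperenv}.

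First I would verify those hypotheses for $\A = \T^+_X$. Since $(X,\C)$ is assumed hyperrigid, $\T^+_X$ is a hyperrigid operator algebra by definition. It remains to exhibit a contractive approximate unit of selfadjoint elements. To this end, take a contractive selfadjoint approximate unit $\{e_i\}$ of $\C$ and consider $\{\rho_\infty(e_i)\}\subseteq \rho_\infty(\C)\subseteq \T^+_X$. On the generators one has $\rho_\infty(e_i)t_\infty(x)=t_\infty(\phi_X(e_i)x)\to t_\infty(x)$ by non-degeneracy of the left action, and $t_\infty(x)\rho_\infty(e_i)=t_\infty(xe_i)\to t_\infty(x)$ since $X$ is a right Hilbert $\C$-module; together with $\rho_\infty(e_i)\rho_\infty(c)\to\rho_\infty(c)$ this shows $\{\rho_\infty(e_i)\}$ is a contractive selfadjoint approximate unit for $\T^+_X$. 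The generalized gauge action $\alpha$ preserves $\rho_\infty(\C)$ and $t_\infty(X)$, hence restricts to a continuous action of $\G$ on $\T^+_X$ by completely isometric automorphisms, so all the requirements are in place.

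Next I would apply equation~(\ref{redequiv}) of Theorem~\ref{hyperenv} with $\A=\T^+_X$, and use $\cenv(\T^+_X)\simeq \O_X$, to obtain
\[
\cenv\big(\T^+_X \cpr\big)\simeq \cenv(\T^+_X)\cpr \simeq \O_X \cpr,
\]
via the canonical embeddings. On the other hand, Theorem~\ref{HNtensor2} furnishes directly
\[
\cenv\big(\T^+_X \cpr\big)\simeq \O_{X\cpr}.
\]
Composing these two identifications produces the sought isomorphism $\O_X \cpr \simeq \O_{X\cpr}$. I do not anticipate any serious obstacle, since the substance of the argument lives in Theorems~\ref{hyperenv} and~\ref{HNtensor2}; the present proof merely assembles them. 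The only points deserving a moment of care are the verification of the selfadjoint approximate unit just given, which genuinely uses non-degeneracy on both sides, and the observation that both displayed isomorphisms are implemented by the canonical maps on $C_c(\G,X)$ and $C_c(\G,\C)$, so that their composition is the natural one.
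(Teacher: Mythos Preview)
Your proof is correct and follows exactly the paper's approach: combine Theorem~\ref{HNtensor2}, which gives $\cenv(\T^+_X\cpr)\simeq \O_{X\cpr}$, with equation~(\ref{redequiv}) of Theorem~\ref{hyperenv}, which gives $\cenv(\T^+_X\cpr)\simeq \cenv(\T^+_X)\cpr\simeq \O_X\cpr$. Your additional verification that $\{\rho_\infty(e_i)\}$ furnishes the required selfadjoint contractive approximate unit for $\T^+_X$ is a detail the paper leaves implicit, but is handled correctly.
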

  
  \begin{proof} 
  By Theorem~\ref{HNtensor2} we have $\cenv\big (\T^+_{X} \cpr \big) \simeq  \O_{X\cpr}$. On the other hand, Theorem~\ref{hyperenv} implies that $\cenv\big (\T^+_{X} \cpr \big) \simeq \cenv (\T^+_{X}) \cpr  $. Hence $\O_X \cpr \simeq \cenv (\T^+_{X}) \cpr  \simeq \O_{X\cpr}$. 
  \end{proof}

It is important to us that an analogous results holds for the full crossed product.

 \begin{theorem} \label{HNtensor1}
Let $\G$ be a locally compact group acting by a generalized gauge action $\alpha$ on a non-degenerate hyperrigid $\ca$-correspondence $(X, \C)$. Then
\[
\O_{X } \rtimes_{\alpha} \G  \simeq  \O_{X\cpd}.
\]
 \end{theorem}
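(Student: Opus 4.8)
The plan is to run, for the full crossed product, the same three–step scheme that yielded Theorem~\ref{HaoNgreduced}, but with the relative crossed product taken with respect to the C$^*$-envelope cover $\cenv(\T^+_X)=\O_X$. Concretely, I would (1) identify the relative full crossed product $\T^+_X\rtimes_{\cenv(\T^+_X),\alpha}\G$ with the tensor algebra $\T^+_{X\cpd}$ of the crossed product correspondence, (2) pass to $\ca$-envelopes, and (3) read off $\O_X\cpf\simeq\O_{X\cpd}$ from Theorem~\ref{hyperenv}. First I would check that $\T^+_X$ satisfies the hypotheses of Theorem~\ref{hyperenv}: it is hyperrigid by assumption, and since $(X,\C)$ is non-degenerate, a selfadjoint contractive approximate unit for $\C$ yields, through $\rho_\infty$, such an approximate unit for $\T^+_X$. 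Hence Theorem~\ref{hyperenv}, equation~(\ref{fullequiv}), applies with $\A=\T^+_X$ and gives
\[
\cenv\big(\T^+_X\rtimes_{\cenv(\T^+_X),\alpha}\G\big)\simeq \cenv(\T^+_X)\cpf=\O_X\cpf .
\]

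The substance of the argument is the identification in step~(1). By definition the relative crossed product $\T^+_X\rtimes_{\cenv(\T^+_X),\alpha}\G$ is the closure of $C_c(\G,\T^+_X)$ inside $\O_X\cpf$, and since $\T^+_X$ is generated by $\C$ and $X$, this closure is precisely the closed algebra generated by $C_c(\G,\C)$ and $C_c(\G,X)$, that is, by the coefficient algebra $\C\cpd$ and the module $X\cpd$ of the crossed product correspondence. Thus the inclusion of $(X\cpd,\C\cpd)$ into $\O_X\cpf$ is an isometric (Toeplitz) representation whose generated non-selfadjoint algebra is exactly $\T^+_X\rtimes_{\cenv(\T^+_X),\alpha}\G$. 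To see that this concrete copy is completely isometrically isomorphic to the abstract tensor algebra $\T^+_{X\cpd}$, I would use a gauge action: the circle gauge action on $\O_X$ (fixing $\C$ and scaling $X$) commutes with the generalized gauge action $\alpha$, because $\alpha_s(\C)=\C$ and $\alpha_s(X)=X$, and therefore descends to an action on $\O_X\cpf$ that fixes $\C\cpd$ and scales $X\cpd$. Together with the fact that $\C\cpd$ is (tautologically) faithfully represented in $\O_X\cpf$, this gauge action lets the standard Fej\'er/grading argument identify the generated algebra with $\T^+_{X\cpd}$ completely isometrically, just as in the reduced situation of Theorem~\ref{HNtensor2}.

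Granting step~(1), the proof concludes quickly. Taking $\ca$-envelopes and invoking $\cenv(\T^+_Y)\simeq\O_Y$ from \cite[Theorem 3.7]{KatsoulisKribsJFA} with $Y=X\cpd$,
\[
\O_{X\cpd}\simeq\cenv\big(\T^+_{X\cpd}\big)\simeq\cenv\big(\T^+_X\rtimes_{\cenv(\T^+_X),\alpha}\G\big)\simeq\O_X\cpf,
\]
which is the assertion. I expect the main obstacle to be step~(1): one must establish the complete isometry of the two tensor algebras \emph{without} any appeal to Cuntz-Pimsner covariance of the crossed product correspondence. Indeed, the whole virtue of routing through the $\ca$-envelope is that it sidesteps verifying the covariance condition for Katsura's ideal $\J_{X\cpd}$ of the crossed product correspondence, which is precisely the computation that remains intractable for the full crossed product; here the covariance is supplied automatically by the equality $\cenv(\T^+_{X\cpd})=\O_{X\cpd}$ rather than checked by hand.
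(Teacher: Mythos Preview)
Your proposal is correct and follows essentially the same three-step scheme as the paper: (1) identify $\T^+_X\rtimes_{\cenv(\T^+_X),\alpha}\G$ with $\T^+_{X\cpd}$, (2) take $\ca$-envelopes using Katsoulis--Kribs, and (3) invoke equation~(\ref{fullequiv}) of Theorem~\ref{hyperenv}. The paper's proof is a two-line combination of these ingredients, importing step~(1) wholesale from \cite[Theorem~7.13]{KatRamMem} rather than re-deriving it.

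One small comment on your sketch of step~(1): you invoke a ``standard Fej\'er/grading argument'' and cite Theorem~\ref{HNtensor2} as the model, but that theorem in fact uses Fowler's uniqueness theorem \cite[Theorem~7.2]{F} for the Toeplitz algebra of a product system, not a Fej\'er-type expectation argument. A gauge action alone does not give faithfulness on the Toeplitz $\ca$-algebra (the quotient $\T_Y\to\O_Y$ is gauge-equivariant but not injective), so your gauge argument would still need to be supplemented by something guaranteeing complete isometry on the tensor algebra specifically. This is exactly what \cite[Theorem~7.13]{KatRamMem} supplies, and why the paper simply cites it; your identification of this as the substantive step is accurate.
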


\begin{proof}
In \cite[Theorem 7.13]{KatRamMem} we proved that 
 \[
\T^+_{X} \rtimes_{\O_X ,\alpha} \G \simeq \T^+_{X\cpd}\quad \mbox{  and  } \quad \cenv\big (\T^+_{X } \rtimes_{\O_X ,\alpha} \G \big) \simeq  \O_{X\cpd}.
 \]
 Now (\ref{fullequiv}) in Theorem~\ref{hyperenv} and the above imply that 
 \[
  \O_{X\cpd}\simeq  \cenv\big (\T^+_{X}  \rtimes_{\cenv(\T^+_x) ,\alpha} \G  \big) \simeq \cenv (\T^+_{X}) \cpf \simeq \O_{X } \rtimes_{\alpha} \G
 \]
and the conclusion follows.
\end{proof}

At this point one might think that the above theorem is the final word regarding the Hao-Ng isomorphism for hyperrigid correspondences. As it turns out, this couldn't be further from the truth. It is indeed the case that we have expressed the crossed product $\O_{X } \rtimes_{\alpha} \G $ as the Cuntz-Pimsner algebra of a $\ca$-correspondence, namely $X\cpd$, but this is not the $\ca$-correspondence that the authors of \cite{BKQR} ask for. Is this a big deal? Most definitely yes, and we devote the next section explaining the reasons why.


\section{Isometric coextensions}

The goal of this section is to answer Problem 3 in Chapter 8 of \cite{KatRamMem}: Is $\T_X^+ \rtimes_\alpha \G$ the tensor algebra of some $\ca$-correspondence? The (affirmative) answer is one of the key ingredients in the proof of Theorem~\ref{thm;summarize}, one of the central results of the paper.

\begin{definition}
Let $(X, \C)$ be a $\ca$-correspondence and $\G$ a locally compact group. A \textit{generalized gauge action} $\alpha: \G \rightarrow \Aut((X,\C))$ is a map from $\G$ into the completely isometric module automorphisms. In particular, for each $s\in \G$, $\alpha_s$ is an isometric automorphism of $X$ and a $*$-automorphism of $\C$ such that 
\[
\alpha_s(\xi c) = \alpha_s(\xi)\alpha_s(c), \ \ \alpha_s(\varphi_X(c)\xi) = \varphi_X(\alpha_s(c))\alpha_s(\xi)
\]
\[
\textrm{and} \ \ \alpha_s(\langle \xi, \eta\rangle) = \langle \alpha_s(\xi), \alpha_s(\eta)\rangle
\]
for all $\xi, \eta\in X$ and $c\in \C$.
\end{definition}

Earlier we said that $\alpha : \G \rightarrow \Aut \T_X$ forms a \textit{generalized gauge action of $\T_X$} if $\alpha_s(X) = X$ and $\alpha_s(\C) = \C$, for all $g \in \G$. The following result says that the two definitions are equivalent and it was observed in \cite[pg 5760]{Kat}. (See also \cite[Lemma 2.6]{HN} for the analogous result with $\O_X$).

\begin{proposition}\label{prop::ggauge}
Let $(X,\C)$ be a non-degenerate $\ca$-correspondence and $\G$ a locally compact group.
If $\alpha : \G \rightarrow \Aut(\T_X)$ is a generalized gauge action of $\T_X$ then it restricts to a generalized gauge action of $(X,\C)$. Conversely, a generalized gauge action $\alpha$ of $(X,\C)$ extends uniquely to a generalized gauge action of $\T_X$.
\end{proposition}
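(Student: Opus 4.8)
The plan is to prove the two implications separately, treating the restriction as the routine direction and the extension as the one that genuinely uses the universal property of $\T_X$. Throughout I identify $(X,\C)$ with its faithful copy $\big(t_{\infty}(X),\rho_{\infty}(\C)\big)$ inside $\T_X$, so that the left action $\phi_X(c)\xi$, the right action $\xi c$, and the inner product $\langle\xi,\eta\rangle$ are realized as honest products, with $\langle\xi,\eta\rangle=\xi^*\eta$ holding in $\T_X$.

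For the restriction direction, suppose $\alpha\colon\G\to\Aut(\T_X)$ is a generalized gauge action of $\T_X$, so each $\alpha_s$ is a $*$-automorphism with $\alpha_s(X)=X$ and $\alpha_s(\C)=\C$. Then $\alpha_s\mid_{\C}$ is visibly a $*$-automorphism of $\C$, and $\alpha_s\mid_X$ is a completely isometric bijection of $X$, since $*$-automorphisms of $\ca$-algebras are completely isometric and $\alpha_s$ carries each subspace onto itself. The three module/inner-product identities in the definition of a generalized gauge action of $(X,\C)$ then follow immediately from multiplicativity and $*$-preservation of $\alpha_s$: one writes $\alpha_s(\xi c)=\alpha_s(\xi)\alpha_s(c)$, $\alpha_s(\phi_X(c)\xi)=\alpha_s(c)\alpha_s(\xi)=\phi_X(\alpha_s(c))\alpha_s(\xi)$ as products in $\T_X$, and $\alpha_s(\langle\xi,\eta\rangle)=\alpha_s(\xi^*\eta)=\alpha_s(\xi)^*\alpha_s(\eta)=\langle\alpha_s(\xi),\alpha_s(\eta)\rangle$. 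Point-norm continuity of $s\mapsto\alpha_s$ restricts to the two subspaces, so $\alpha\mid_{(X,\C)}$ is a generalized gauge action of $(X,\C)$.

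For the converse, let $\alpha$ be a generalized gauge action of $(X,\C)$. Fixing $s\in\G$, I would form $\rho_s:=\rho_{\infty}\circ(\alpha_s\mid_{\C})$ and $t_s:=t_{\infty}\circ(\alpha_s\mid_X)$ and verify that $(\rho_s,t_s)$ is an isometric (Toeplitz) representation of $(X,\C)$ into $\T_X$. The covariance $\rho_s(c)t_s(x)=t_s(\phi_X(c)x)$ reduces, via the left-module identity $\alpha_s(\phi_X(c)x)=\phi_X(\alpha_s(c))\alpha_s(x)$, to the corresponding relation for $(\rho_{\infty},t_{\infty})$, while $t_s(x)^*t_s(x')=\rho_{\infty}(\langle\alpha_s(x),\alpha_s(x')\rangle)=\rho_{\infty}(\alpha_s\langle x,x'\rangle)=\rho_s(\langle x,x'\rangle)$ is the isometric relation, using that $\alpha_s$ preserves the inner product (and $\rho_s$ is non-degenerate since $\alpha_s\mid_{\C}$ is a $*$-automorphism). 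The universal property of $\T_X$ then yields a unique $*$-homomorphism $\tilde\alpha_s\colon\T_X\to\T_X$ with $\tilde\alpha_s(\rho_{\infty}(c))=\rho_{\infty}(\alpha_s(c))$ and $\tilde\alpha_s(t_{\infty}(x))=t_{\infty}(\alpha_s(x))$.

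It remains to upgrade $s\mapsto\tilde\alpha_s$ to a continuous generalized gauge action of $\T_X$ and to establish uniqueness. Since $\alpha_{s^{-1}}$ inverts $\alpha_s$ on both $\C$ and $X$, the compositions $\tilde\alpha_s\circ\tilde\alpha_{s^{-1}}$ and $\tilde\alpha_{s^{-1}}\circ\tilde\alpha_s$ act as the identity on the generators $\rho_{\infty}(\C)\cup t_{\infty}(X)$, hence everywhere, so each $\tilde\alpha_s$ is a $*$-automorphism; likewise $\tilde\alpha_s\circ\tilde\alpha_r$ and $\tilde\alpha_{sr}$ agree on generators, giving a group homomorphism $\G\to\Aut(\T_X)$. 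As $\alpha_s$ is bijective on $\C$ and on $X$, we obtain $\tilde\alpha_s(\C)=\C$ and $\tilde\alpha_s(X)=X$, so $\tilde\alpha$ is a generalized gauge action of $\T_X$ restricting to $\alpha$. For continuity I would use the standard density-plus-uniform-boundedness argument: the set of $a\in\T_X$ for which $s\mapsto\tilde\alpha_s(a)$ is continuous is a closed subalgebra (the limits pass because $\|\tilde\alpha_s\|=1$), it contains the generators by hypothesis, and therefore equals $\T_X$. Uniqueness is immediate, since any generalized gauge action of $\T_X$ restricting to $\alpha$ must coincide with $\tilde\alpha_s$ on $\rho_{\infty}(\C)$ and $t_{\infty}(X)$, hence on the $\ca$-algebra they generate, namely $\T_X$. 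The one genuinely delicate step is the initial verification that $(\rho_s,t_s)$ is Toeplitz; everything afterward is bookkeeping with the universal property, and the continuity claim, while requiring care, is the usual $\eps/3$ routine.
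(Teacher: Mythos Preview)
Your proof is correct and is the standard argument via the universal property of $\T_X$. Note that the paper itself does not supply a proof of this proposition: it simply records the result as having been observed in \cite[pg~5760]{Kat}, with the analogous statement for $\O_X$ appearing as \cite[Lemma~2.6]{HN}. Your write-up follows precisely the expected route that those references would take---restriction is immediate from multiplicativity and $*$-preservation of $\alpha_s$ on the concretely realized copy of $(X,\C)$ inside $\T_X$, and extension goes through the universal property applied to the twisted Toeplitz pair $(\rho_\infty\circ\alpha_s,\,t_\infty\circ\alpha_s)$, with the group law, invertibility, continuity, and uniqueness all read off on generators. One small point worth making explicit: in your continuity argument, the set of $a\in\T_X$ for which $s\mapsto\tilde\alpha_s(a)$ is continuous is in fact a closed $*$-subalgebra (closure under adjoints follows from $\tilde\alpha_s(a^*)=\tilde\alpha_s(a)^*$), which is what is needed to conclude it equals all of $\T_X$.
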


The fundamental object of study in this section is the $\ca$-correspon-\break dence dynamical system $((X,\C), \G,\alpha)$ which is given by a non-degenerate $\ca$-correspondence $(X,\C)$ and a generalized gauge action $\alpha$ of the locally compact group $\G$ acting on $(X,\C)$.

\begin{definition}
A representation of the $\ca$-correspondence dynamical system $((X,\C), \G, \alpha)$ is a quadruple $(\rho, t, u, \H)$ consisting of a completely contractive representation $(\rho, t, \H)$ of $(X,\C)$ and a strongly continuous unitary representation $u: \G \rightarrow U(\H)$ satisfying the covariance relations
\[
u(s)t(\xi) = t(\alpha_s(\xi))u(s) \ \ \textrm{and} \ \ u(s)\rho(c) = \rho(\alpha_s(c))u(s)
\]
for all $s\in \G, \xi\in X$ and $c\in \C$. Moreover, $(\rho,t, u,\H)$ is said to be \textit{isometric} if $(\rho,t, \H)$ is an isometric (Toeplitz) representation of $(X,\C)$.
\end{definition}

The following theorem is an extension of \cite[Theorem 2.12]{MS}.

\begin{theorem}\label{thm::isometricreps}
Let $((X,\C),\G,\alpha)$ be a $\ca$-correspondence dynamical system.
The isometric representations $(\rho, t, u,\H)$ of $((X,\C),\G,\alpha)$ are in bijective correspondence with the isometric representations $(\pi, u, \H)$ of $(\T_X, \G, \alpha)$. Specifically, they are related by $\pi = \rho\rtimes t$.
\end{theorem}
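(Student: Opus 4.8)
The plan is to reduce the statement to the non-equivariant bijection of Muhly--Solel and then verify that the group covariance relations match up on the two sides. First I would recall that \cite[Theorem 2.12]{MS} already establishes, on a fixed Hilbert space $\H$, a bijection between isometric (Toeplitz) representations $(\rho, t)$ of $(X, \C)$ and $*$-representations $\pi$ of $\T_X$, implemented by $\pi = \rho \rtimes t$; concretely, $\pi$ is the unique $*$-homomorphism with $\pi(\rho_{\infty}(c)) = \rho(c)$ and $\pi(t_{\infty}(x)) = t(x)$, which exists because $(\rho,t)$ is isometric and which is determined by these values since $\rho_{\infty}(\C)$ and $t_{\infty}(X)$ generate $\T_X$. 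Since the unitary $u$ is simply carried along in both descriptions, the entire content of the theorem is the assertion that the two covariance relations for the quadruple $(\rho,t,u)$ are equivalent to the single covariance relation $u(s)\pi(a)u(s)^* = \pi(\alpha_s(a))$, $a\in\T_X$, for the pair $(\pi,u)$. Here one crucially invokes Proposition~\ref{prop::ggauge}: the generalized gauge action $\alpha$ on $\T_X$ restricts to the given action on $\C$ and $X$, so that $\alpha_s(\rho_{\infty}(c)) = \rho_{\infty}(\alpha_s(c))$ and $\alpha_s(t_{\infty}(x)) = t_{\infty}(\alpha_s(x))$.

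For the forward direction I would start with an isometric representation $(\rho, t, u, \H)$ of the dynamical system, set $\pi = \rho \rtimes t$, fix $s \in \G$, and consider
\[
\mathcal{F}_s = \{ a \in \T_X : u(s)\pi(a)u(s)^* = \pi(\alpha_s(a)) \}.
\]
One checks that $\mathcal{F}_s$ is a closed subspace that is moreover closed under products (since $\alpha_s$ is multiplicative) and under adjoints (since $u(s)$ is unitary and $\alpha_s$ is a $*$-map, conjugating the defining equation by $u(s)$ and taking adjoints returns the same condition for $a^*$), hence a closed $*$-subalgebra of $\T_X$. It then suffices to check the generators. For $c \in \C$ the relation $u(s)\rho(c) = \rho(\alpha_s(c))u(s)$ gives $u(s)\pi(\rho_{\infty}(c))u(s)^* = \rho(\alpha_s(c)) = \pi(\rho_{\infty}(\alpha_s(c))) = \pi(\alpha_s(\rho_{\infty}(c)))$, and likewise $u(s)\pi(t_{\infty}(x))u(s)^* = t(\alpha_s(x)) = \pi(\alpha_s(t_{\infty}(x)))$ for $x \in X$. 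Thus $\mathcal{F}_s = \T_X$ and $(\pi, u)$ is an isometric representation of $(\T_X, \G, \alpha)$.

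Conversely, given an isometric representation $(\pi, u, \H)$ of $(\T_X, \G, \alpha)$, the Muhly--Solel bijection returns the unique isometric representation $(\rho, t)$ of $(X, \C)$ with $\rho = \pi \circ \rho_{\infty}$ and $t = \pi \circ t_{\infty}$; keeping the same $u$ and evaluating the covariance relation for $(\pi,u)$ on the generators $\rho_{\infty}(c)$ and $t_{\infty}(x)$ reproduces exactly the two covariance relations defining a representation of the dynamical system, again through Proposition~\ref{prop::ggauge}. Since isometric representations are automatically completely contractive, $(\rho, t, u, \H)$ is indeed a representation of $((X,\C),\G,\alpha)$. The two assignments are mutually inverse because they already are at the level of $(\rho,t)\leftrightarrow\pi$ and leave $u$ untouched, which yields the claimed bijection. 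I expect the only genuinely delicate point to be the bookkeeping around the $*$-closure of $\mathcal{F}_s$ and the compatibility, supplied by Proposition~\ref{prop::ggauge}, between the action $\alpha$ on $\T_X$ appearing in the $(\pi,u)$ covariance and the action on $(X,\C)$ appearing in the definition of a representation of the dynamical system; everything else is a direct transcription of \cite[Theorem 2.12]{MS}.
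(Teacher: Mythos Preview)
Your proposal is correct and follows essentially the same approach as the paper: both directions reduce to \cite[Theorem 2.12]{MS} together with Proposition~\ref{prop::ggauge}, and the only content is the verification that the covariance relations match on generators. The single cosmetic difference is that where you run the explicit closed $*$-subalgebra argument with $\mathcal{F}_s$, the paper instead observes that $u(s)(\rho\rtimes t)(\cdot)u(s)^*$ and $(\rho\rtimes t)\circ\alpha_s = (\rho\circ\alpha_s)\rtimes(t\circ\alpha_s)$ agree on $X$ and $\C$ and invokes the uniqueness clause of \cite[Theorem 2.12]{MS} directly---which is of course the same argument in different packaging.
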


\begin{proof}
If $(\pi, u, \H)$ is an isometric  representation of $(\T_X,\G, \alpha)$ then \cite[Theorem 2.12]{MS} proves that there exists an isometric  representation $(\rho,t,\H)$ of $(X,\C)$ such that $\rho\rtimes t = \pi$. Proposition \ref{prop::ggauge} gives that $\alpha$ is a generalized gauge action of $\G$ on $(X,\C)$ and so the covariance relations between $\rho, t$ and $u$ are automatic. Hence, $(\rho, t, u, \H)$ is an isometric  representation of $((X,\C),\G, \alpha)$.

Conversely, suppose $(\rho, t, u,\H)$ is an isometric  representation of\break $((X,\C),\G,\alpha)$. Again by \cite[Theorem 2.12]{MS} this gives that $(\rho \rtimes t, \H)$ is an isometric  representation of $\T_X$ and by Proposition \ref{prop::ggauge} $\alpha$ extends uniquely to a generalized gauge action of $\G$ on $\T_X$. Because $u(s)(\rho\rtimes t(\cdot)) u(s)^*$ and $(\rho\circ\alpha_s)\rtimes (t\circ\alpha_s) = (\rho\rtimes t)\circ\alpha_s$ agree on $X$ and $\C$ then the uniqueness of \cite[Theorem 2.12]{MS} gives that
\[
u(s)\rho\rtimes t(a) = \rho\rtimes t(\alpha_s(a))u(s)
\]
for all $s\in \G$ and $a\in \T_X$. Therefore, $(\rho\rtimes t, u, \H)$ is an isometric  representation of $(\T_X,\G,\alpha)$.
\end{proof}

If $(\rho, t, u, \H)$ and $(\rho_1, t_1, u_1, \H_1)$ are completely contractive  representations of $((X,\C), \G, \alpha)$ we say that the latter is a {\em dilation} of the former if $\H \subseteq \H_1$ and
\begin{enumerate}
\item $\H$ reduces $\pi_1$ and $u_1$ with $\pi_1(c)|_\H = \pi(c), c\in \C$ and $u_1(g)|_\H = u(g), g\in \G$, and
\item $\H$ is a semi-invariant subspace for $t_1$ and $P_\H t_1(\xi)|_\H = t(\xi), \xi \in X$.
\end{enumerate}

We call such a dilation an {\em extension} if $\H$ is an invariant subspace for $t_1$ and a {\em coextension} if $\H$ is a coinvariant subspace for $t_1$.

The dilation $(\rho_1, t_1, u_1, \H_1)$ is called {\em minimal} when $\H_1$ is the smallest reducing subspace for $t_1$ containing $\H$.

Now we need a lemma relating to the step-by-step dilation techniques of Muhly and Solel of \cite[Section 3]{MS}. Their proof is modelled after Popescu's step-by-step dilation technique \cite{Pop0} using the Scha\"{e}ffer matrix construction \cite{Schae}. The following is also a slight simplification of the original proof in \cite{MS}.

To this end suppose $(\rho, t, u, \H)$ is a completely contractive  representation of the $\ca$-correspondence dynamical system $((X,\C),\G,\alpha)$. The ultimate goal is to prove that every such representation dilates to an isometric  representation.

As in \cite{MS}, define the Hilbert space $\H^X = \overline{X \otimes_\rho \H}^{\langle\cdot,\cdot\rangle}$ where 
\[
\xi c \otimes h = \xi \otimes \rho(c)h \ \ \textrm{and} \ \ 
\langle \xi \otimes h, \eta \otimes k\rangle = \langle h, \rho(\langle \xi,\eta\rangle)k\rangle
\]
for $\xi,\eta\in X$, $c\in\C$ and $h,k\in \H$. As well, define $\sigma^X : X \rightarrow B(\H, \H^X)$ by $\sigma^X(\xi)h = \xi\otimes h$ and $\tilde t: \H^X \rightarrow \H$ by $\tilde t(\xi\otimes h) = t(\xi)h$. 
From here one defines the one step dilation to $\H_1 = \H \oplus \H^X$ given by
\[
t_1(\xi) = \left[\begin{array}{cc} t(\xi) & 0 \\ (I - \tilde t^* \tilde t)^{1/2} \sigma^X(\xi) & 0 \end{array}\right]
\]
and 
\[
\rho_1(c) = \left[\begin{array}{cc} \rho(c) & 0 \\ 0 & \tilde\rho(c) \end{array}\right]
\]
where $\tilde\rho : \C \rightarrow B(\H^X)$ is given by $\tilde\rho(c)(\xi\otimes h) = \varphi_X(c)\xi \otimes h$.

\begin{lemma}\label{lemma::onestep}
Consider
 \[
u_1(s) = \left[\begin{array}{cc} u(s) & 0 \\ 0 & \tilde u(s)\end{array}\right]
\]
where $\tilde u : G \rightarrow B(\H^X)$ is given by $\tilde u(s)(\xi\otimes h) = \alpha_s(\xi)\otimes u(s)h$, which is well-defined.
Then $(\rho_1, t_1, u_1, \H_1)$ is a completely contractive  representation of $((X,\C), \G, \alpha)$ such that
\[
t_1(\xi)^*t_1(\eta) = \left[\begin{array}{cc} \rho(\langle \xi,\eta\rangle) & 0 \\ 0&0\end{array}\right]
\]
for all $\xi,\eta\in\H$.
\end{lemma}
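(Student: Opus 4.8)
The plan is to take $(\rho_1,t_1,\H_1)$ to be a completely contractive representation of $(X,\C)$ for free: this is exactly the one-step dilation of Muhly and Solel \cite[Section 3]{MS}, reproduced in the construction preceding the lemma, and nothing about the group enters there. Consequently the genuinely new work is to prove that the diagonal summand $\tilde u$ is a well-defined strongly continuous unitary representation, to verify the two covariance relations for $(\rho_1,t_1,u_1)$, and to read off the displayed formula for $t_1(\xi)^*t_1(\eta)$. I would carry these out in that order.

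First I would check that $\tilde u(s)$ is well defined on $\H^X$. The balancing relation $\xi c\otimes h=\xi\otimes\rho(c)h$ is respected because $\alpha_s(\xi c)\otimes u(s)h=\alpha_s(\xi)\alpha_s(c)\otimes u(s)h=\alpha_s(\xi)\otimes\rho(\alpha_s(c))u(s)h$, and the covariance relation $u(s)\rho(c)=\rho(\alpha_s(c))u(s)$ rewrites this as $\alpha_s(\xi)\otimes u(s)\rho(c)h$, i.e.\ the image of $\xi\otimes\rho(c)h$. A parallel computation, using that $\alpha_s$ preserves the inner product, that $u(s)$ is unitary, and again the covariance of $\rho$, gives $\langle\tilde u(s)(\xi\otimes h),\tilde u(s)(\eta\otimes k)\rangle=\langle\xi\otimes h,\eta\otimes k\rangle$, so $\tilde u(s)$ is isometric; surjectivity is clear since $\alpha_s$ and $u(s)$ are onto, and $\tilde u(st)=\tilde u(s)\tilde u(t)$ is immediate from the homomorphism properties of $\alpha$ and $u$. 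Strong continuity of $\tilde u$ follows from that of $u$ together with the continuity of $\alpha$, so $u_1$ is a strongly continuous unitary representation of $\G$.

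Next I would establish the covariance relations in matrix form. The relation $u_1(s)\rho_1(c)=\rho_1(\alpha_s(c))u_1(s)$ splits into the $(1,1)$-entry, which is the covariance of the original representation, and the $(2,2)$-entry $\tilde u(s)\tilde\rho(c)=\tilde\rho(\alpha_s(c))\tilde u(s)$, which follows on elementary tensors from the module identity $\alpha_s(\varphi_X(c)\xi)=\varphi_X(\alpha_s(c))\alpha_s(\xi)$. For $u_1(s)t_1(\xi)=t_1(\alpha_s(\xi))u_1(s)$ the only nontrivial entry is the $(2,1)$ one, where I must identify $\tilde u(s)(I-\tilde t^*\tilde t)^{1/2}\sigma^X(\xi)$ with $(I-\tilde t^*\tilde t)^{1/2}\sigma^X(\alpha_s(\xi))u(s)$. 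Here I would record two intertwining identities: $\tilde u(s)\sigma^X(\xi)=\sigma^X(\alpha_s(\xi))u(s)$, which is immediate on vectors, and $u(s)\tilde t=\tilde t\,\tilde u(s)$, which follows from the covariance of $t$.

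The main obstacle is passing from the latter identity to the commutation of $\tilde u(s)$ with the defect operator. From $u(s)\tilde t=\tilde t\,\tilde u(s)$ and unitarity I would deduce $\tilde u(s)\tilde t^*=\tilde t^*u(s)$, whence $\tilde u(s)(I-\tilde t^*\tilde t)\tilde u(s)^*=I-\tilde t^*\tilde t$; taking square roots (equivalently, applying the continuous functional calculus) gives $\tilde u(s)(I-\tilde t^*\tilde t)^{1/2}=(I-\tilde t^*\tilde t)^{1/2}\tilde u(s)$, and combining this with $\tilde u(s)\sigma^X(\xi)=\sigma^X(\alpha_s(\xi))u(s)$ yields the $(2,1)$-entry identity. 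Finally, the formula for $t_1(\xi)^*t_1(\eta)$ is a direct block computation: its $(1,1)$-entry equals $t(\xi)^*t(\eta)+\sigma^X(\xi)^*(I-\tilde t^*\tilde t)\sigma^X(\eta)$, and using $\tilde t\,\sigma^X(\zeta)=t(\zeta)$ and $\sigma^X(\xi)^*\sigma^X(\eta)=\rho(\langle\xi,\eta\rangle)$ this collapses to $\rho(\langle\xi,\eta\rangle)$, while every other entry vanishes because the second column of $t_1$ is zero.
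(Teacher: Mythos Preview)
Your proof is correct and follows essentially the same route as the paper's: cite Muhly--Solel for the fact that $(\rho_1,t_1)$ is a completely contractive representation satisfying the displayed $t_1(\xi)^*t_1(\eta)$ formula, verify that $\tilde u$ is a well-defined unitary representation, and then establish the covariance relations by showing that $\tilde u(s)$ commutes with the defect operator $(I-\tilde t^*\tilde t)^{1/2}$ via the Halmos square-root trick. The only cosmetic difference is that you deduce $\tilde u(s)\tilde t^*\tilde t=\tilde t^*\tilde t\,\tilde u(s)$ from the clean intertwining identity $u(s)\tilde t=\tilde t\,\tilde u(s)$ and its adjoint, whereas the paper verifies the same commutation by a direct inner-product computation on elementary tensors; your derivation is marginally slicker but the content is identical.
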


\begin{proof}
By \cite[Lemma 3.7]{MS} $(\rho_1, t_1)$ is a completely contractive  representation of $(X,\C)$ on $\H_1$ which satisfies the last two statements in the lemma.

First note that $\tilde u$ is in fact well-defined since it respects the internal $\C$-modularity of $\H^X$,
\begin{align*}
\tilde u(s)(\xi c\otimes h) & = \alpha_s(\xi c) \otimes u(s)h
\\ & = \alpha_s(\xi)\alpha_s(c) \otimes u(s)h
\\ & = \alpha_s(\xi) \otimes \rho(\alpha_s(c))u(s)h
\\ & = \alpha_s(\xi) \otimes u(s)\rho(c)h
\\ & = \tilde u(s)(\xi \otimes \rho(c)h),
\end{align*}
for all $s\in \G, c\in \C, \xi\in X, h\in \H$.

Additionally, observe that $\tilde u(s)$ is unitary since for all $s\in \G, \xi,\eta\in X$ and $h,k\in \H$ we have that 
\begin{align*}
\langle \tilde u(s)(\xi\otimes h), \tilde u(s)(\eta\otimes k)\rangle & = \langle \alpha_s(\xi)\otimes u(s)h, \alpha_s(\eta)\otimes u(s)k\rangle
\\ & = \langle u(s)h, \rho(\langle \alpha_s(\xi),\alpha_s(\eta)\rangle)u(s)k\rangle
\\ & = \langle u(s)h, \rho(\alpha_s(\langle \xi,\eta\rangle))u(s)k\rangle
\\ & = \langle u(s)h, u(s)\rho(\langle \xi,\eta\rangle)k\rangle
\\ & = \langle h, \rho(\langle \xi,\eta\rangle)k\rangle
\\ & = \langle \xi\otimes h, \eta\otimes k\rangle.
\end{align*}

Next we need to make the following calculations:
\begin{align*}
\sigma^X(\alpha_s(\xi))u(s)h & = \alpha_s(\xi)\otimes u(s)h 
\\ & = \tilde u(s)(\xi \otimes h)
\\ & = \tilde u(s)\sigma^X(\xi)h
\end{align*}
and
\begin{align*}
\langle {\tilde t}^*\tilde t \sigma^X(\alpha_s(\xi))u(s)h, \eta\otimes k\rangle 
& = \langle \tilde t(\alpha_s(\xi)\otimes u(s)h), \tilde t(\eta\otimes k)\rangle
\\ & = \langle t(\alpha_s(\xi))u(s)h, t(\eta)k\rangle
\\ & = \langle u(s)t(\xi)h, t(\eta)k\rangle
\\ & = \langle t(\xi)h, t(\alpha_{s^{-1}}(\eta))u(s)^*k\rangle
\\ & = \langle \tilde t^* \tilde t(\xi\otimes h), \alpha_{s^{-1}}(\eta)\otimes u(s^{-1})k\rangle
\\ & = \langle \tilde u(s) \tilde t^* \tilde t(\xi\otimes h), \eta\otimes k\rangle
\\ & = \langle \tilde u(s) \tilde t^* \tilde t\sigma^X(\xi)h, \eta\otimes k\rangle.
\end{align*}
Combining these one gets 
\begin{align*}
\tilde u(s)(I - \tilde t^*\tilde t)\sigma^X(\xi) &= (I - \tilde t^*\tilde t)\sigma^X(\alpha_s(\xi))u(s)
\\ & = (I - \tilde t^* \tilde t)\tilde u(s) \sigma^X(\xi).
\end{align*}
Hence, 
\[
\tilde u(s)(I - \tilde t^*\tilde t) = (I - \tilde t^*\tilde t)\tilde u(s)
\]
and by a standard trick often attributed to Halmos
\[
\tilde u(s)(I - \tilde t^*\tilde t)^{1/2} = (I - \tilde t^*\tilde t)^{1/2}\tilde u(s).
\]

Now, we need to establish the covariance relations between $(\rho_1, t_1)$ and $u_1$.
From the previous paragraph we have that
\[
\tilde u(s)(I - \tilde t^*\tilde t)^{1/2}\sigma^X(\xi) = (I - \tilde t^*\tilde t)^{1/2}\sigma^X(\alpha_s(\xi))u(s)
\]
and thus $u_1(s)t_1(\xi) = t_1(\alpha_s(\xi))u_1(s)$.

Second, it is much more straightforward to calculate that
\begin{align*}
\tilde u(s) \tilde \rho(c)(\xi\otimes h) & = \tilde u(s)(\varphi_X(c)\xi \otimes h)
\\ & = \alpha_s(\varphi_X(c)\xi) \otimes u(s)h
\\ & = \varphi_X(\alpha_s(c))\alpha_s(\xi) \otimes u(s)h
\\ & = \tilde \rho(\alpha_s(c))\tilde u(s)(\xi \otimes h).
\end{align*}
Therefore, $u_1(s) \rho_1(c) = \rho_1(\alpha_s(c))u_1(s)$ and the conclusion follows.
\end{proof}

\begin{theorem}\label{thm::dilation}
Every completely contractive  representation of the non-degenerate $\ca$-correspondence dynamical system $((X,\C),\G,\alpha)$ has a minimal isometric coextension. Moreover, the minimal isometric coextension is unique up to unitary equivalence.
\end{theorem}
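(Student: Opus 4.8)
The plan is to construct the coextension by iterating the one-step dilation of Lemma~\ref{lemma::onestep}, exactly in the spirit of the Sch\"{a}ffer-type matrix construction of \cite{Schae} used by Popescu \cite{Pop0} and by Muhly--Solel \cite{MS}. Starting from $(\rho,t,u,\H)=(\rho_0,t_0,u_0,\H_0)$, I would apply Lemma~\ref{lemma::onestep} repeatedly to produce a tower of completely contractive representations $(\rho_n,t_n,u_n,\H_n)$ with $\H_{n+1}=\H_n\oplus\H_n^X$, where $(\rho_{n+1},t_{n+1},u_{n+1},\H_{n+1})$ is the one-step dilation of $(\rho_n,t_n,u_n,\H_n)$. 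The block-lower-triangular form of $t_{n+1}$ shows that $\H_n$ is coinvariant for $t_{n+1}$ and reduces $\rho_{n+1}$ and $u_{n+1}$, so the inclusions $\H_n\hookrightarrow\H_{n+1}$ are isometric and compatible with the group action. I then set $\H_\infty=\overline{\bigcup_n\H_n}$.

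The crucial point is that the limiting operators are well defined. For fixed $\xi\in X$ and $h\in\H_n$, the defect vector $t_{n+1}(\xi)h-t_n(\xi)h$ lies in the new summand $\H_n^X$ and has squared norm $\langle h,\rho_n(\langle\xi,\xi\rangle)h\rangle-\|t_n(\xi)h\|^2$; the conclusion of Lemma~\ref{lemma::onestep} says precisely that $t_{n+1}$ is isometric on $\H_n$, so at the next stage this defect vanishes and $t_m(\xi)h=t_{n+1}(\xi)h$ for every $m\ge n+1$. Hence the sequence $t_m(\xi)h$ stabilizes, and $t_\infty(\xi)h:=t_{n+1}(\xi)h$ defines a linear map on the dense subspace $\bigcup_n\H_n$ which, by the same stabilization, satisfies $t_\infty(\xi)^*t_\infty(\eta)=\rho_\infty(\langle\xi,\eta\rangle)$ there; thus $t_\infty$ extends to an isometric (Toeplitz) map on $\H_\infty$. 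Defining $\rho_\infty$ and $u_\infty$ as the strong-operator limits of $\rho_n$ and $u_n$, all the covariance and module relations pass to the limit, $\H$ remains coinvariant for $t_\infty$ while reducing $\rho_\infty$ and $u_\infty$, and $P_\H t_\infty(\xi)|_\H=t(\xi)$. Restricting $(\rho_\infty,t_\infty,u_\infty)$ to the smallest reducing subspace $\H_{\min}$ containing $\H$ yields the desired minimal isometric coextension.

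For uniqueness, I would first identify $\H_{\min}$ concretely as $\overline{\spn}\{t_\infty(\xi_1)\cdots t_\infty(\xi_n)h: n\ge0,\ \xi_i\in X,\ h\in\H\}$: coinvariance of $\H$ (so $t_\infty(\xi)^*\H\subseteq\H$), the Toeplitz relation $t_\infty(\eta)^*t_\infty(\xi)=\rho_\infty(\langle\eta,\xi\rangle)$, the identity $\rho_\infty(c)t_\infty(\xi)=t_\infty(\phi_X(c)\xi)$, and the covariance $u_\infty(s)t_\infty(\xi)=t_\infty(\alpha_s(\xi))u_\infty(s)$ together show this span is reducing, hence equal to $\H_{\min}$. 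Given two minimal isometric coextensions $(\rho^{(i)},t^{(i)},u^{(i)},\H^{(i)})$, $i=1,2$, I would define $W$ on the corresponding spanning sets by $W\big(t^{(1)}(\xi_1)\cdots t^{(1)}(\xi_n)h\big)=t^{(2)}(\xi_1)\cdots t^{(2)}(\xi_n)h$. Repeatedly applying the Toeplitz relation and coinvariance collapses any inner product of two such words to an expression involving only $\rho$, $t$, $u$ and vectors of $\H$, so the value is independent of $i$; this shows $W$ is well defined and isometric, minimality makes it onto, and the defining formula makes it intertwine the two systems.

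The main obstacle is the well-definedness of the inductive limit in the second paragraph: unlike the classical single-contraction case, one must verify both that the accumulated defects actually terminate (so that $t_\infty$ exists as an honest operator rather than a merely formal direct limit) and that the auxiliary unitary action $\tilde u$ built in Lemma~\ref{lemma::onestep} threads compatibly through every stage, so that $u_\infty$ is a strongly continuous unitary representation satisfying the covariance relations with the isometric $t_\infty$. Once this limiting representation is in hand, both the coextension property and the uniqueness argument reduce to bookkeeping with the Toeplitz and covariance relations.
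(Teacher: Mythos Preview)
Your proposal is correct and follows essentially the same approach as the paper: iterate the one-step dilation of Lemma~\ref{lemma::onestep}, pass to the inductive limit, restrict to the smallest reducing subspace containing $\H$ (which you and the paper both identify as the closed span of the word vectors $t_\infty(\xi_1)\cdots t_\infty(\xi_n)h$), and prove uniqueness via the explicit intertwiner on that spanning set. Where the paper simply cites \cite[Theorem~3.3]{MS} for the fact that the limiting $(\rho',t')$ is isometric and \cite[Proposition~3.2]{MS} for the construction of $W$ intertwining the $(\rho,t)$-parts, you spell out these arguments directly (the stabilization-of-defects observation and the inner-product collapse); the paper then checks compatibility of $W$ with the group representations by exactly the covariance computation you indicate.
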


\begin{proof}
Let $(\rho,t,u,\H)$ be a completely contractive  representation of $((X,\C),\G,\alpha)$. Following the proof of \cite[Theorem 3.3]{MS} repeatedly use Lemma \ref{lemma::onestep} to get a sequence of completely contractive  representations $(\rho_n, t_n, u_n, \H_n)$ of $((X,\C), \G, \alpha)$ in the obvious manner: use $(\rho,t, u, \H)$ to produce $(\rho_1,t_1, u_1, \H_1)$ and then recursively $$(\rho_n,t_n, u_n, \H_n) = ((\rho_{n-1})_1, (t_{n-1})_1, (u_{n-1})_1, (\H_{n-1})_1)$$ from the previous lemma and the discussion preceding it.

Let $\H' = \overline{\cup_{n\geq 1} \H_n}$ and define $\rho' = \varinjlim \rho_n, t' = \varinjlim t_n$ and $u' = \varinjlim u_n$. By \cite[Theorem 3.3]{MS} $(\rho', t',\H')$ is an isometric  representation of $(X,\C)$. Note that $u'$ is a strongly continuous unitary representation of $\G$ since it is the direct sum of such representations.

Now to the covariance relations:
\begin{align*}
u'(s) \rho'(c)P_{\H_n}  & = u_n(s) \rho_n(c)P_{\H_n} 
\\ & = \rho_n(\alpha_s(c))u_n(s) P_{\H_n}
\\ & = \rho'(\alpha_s(c))u'(s) P_{\H_n}
\end{align*}
for all $s\in\G, c\in \C$ and $n\in \bbN$.
As well,
\begin{align*}
u'(s) t'(\xi) P_{\H_n} & = u_{n+1} t_{n+1}(\xi) P_{\H_n}
\\ & = t_{n+1}(\alpha_s(\xi))u_{n+1}(s) P_{\H_n}
\\ & = t_{n+1}(\alpha_s(\xi))u_n(s) P_{\H_n}
\\ & = t'(\alpha_s(\xi))u'(s)P_{\H_n}
\end{align*}
for all $s\in \G, \xi\in X$ and $n\in \bbN$. Therefore, the covariance relations are satisfied and $(\rho',t',u',\H')$ is an isometric coextension of $(\rho,t,u,\H)$.

Now let 
\[
\K = \overline{\spn}\{t'(\xi_1)\cdots t'(\xi_n)h : \xi_i\in X, h\in\H, n\geq 1\} \subset \H'.
\]
Because of the covariance relations of $(\rho',t',u',\H')$ one can see that $\K$ is a reducing subspace of $(\rho',t',u',\H')$ that contains $\H$. Thus, $(\rho', t', u', \K)$ is a minimal isometric coextension of $(\rho, t, u, \H)$.

In regard to uniqueness, suppose that $(\rho'', t'', u'', \H'')$ is another isometric coextension of $(\rho,t,u,\H)$. \cite[Proposition 3.2]{MS} proves that there exists a unitary $W: \K \rightarrow \H''$ such that $W\rho'(\cdot) = \rho''(\cdot)W$, $Wt'(\cdot) = t''(\cdot)W$ and $Wh = h$, for all $h\in \H$. Now
\begin{align*}
Wu'(s)t'(\xi_1)\cdots t'(\xi_n)h & = Wt'(\alpha_s(\xi_1))\cdots t'(\alpha_s(\xi_n))u'(s)h
\\ & = t''(\alpha_s(\xi_1)) \cdots t''(\alpha_s(\xi_n))Wu(s)h
\\ & = t''(\alpha_s(\xi_1)) \cdots t''(\alpha_s(\xi_n))u(s)h
\\ & = t''(\alpha_s(\xi_1)) \cdots t''(\alpha_s(\xi_n))u''(s)Wh
\\ & = u''(s)t''(\xi_1)\cdots t''(\xi_n)Wh
\\ & = u''(s)W t'(\xi_1)\cdots t'(\xi_n)h
\end{align*}
Therefore, by minimality $Wu'(\cdot) = u''(\cdot)W$ and so $(\rho',t',u',\K)$ and $(\rho'', t'', u'', \H'')$ are unitarily equivalent.
\end{proof}

\begin{theorem}\label{thm::isometrictensor}
Let $(X,\C)$ be a non-degenerate $\ca$-correspondence and let $\alpha$ be a generalized gauge action of a locally compact group $\G$. Then 
\[
\T_X^+ \rtimes_\alpha \G \simeq \T_X^+ \rtimes_{\T_X, \alpha} \G  \simeq \T_{X \rtimes_\alpha \G}^+
\]
\end{theorem}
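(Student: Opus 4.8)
The plan is to prove the two completely isometric isomorphisms separately. First note that since $\alpha$ extends by Proposition~\ref{prop::ggauge} to a generalized gauge action of $\T_X$, the pair $(\T_X,\iota)$ is an $\alpha$-admissible C$^*$-cover of the (approximately unital) operator algebra $\T_X^+$, so the relative crossed product $\T_X^+\rtimes_{\T_X,\alpha}\G=\overline{C_c(\G,\T_X^+)}\subseteq\T_X\cpf$ is well defined and Theorem~\ref{thm::minmax} supplies a canonical completely contractive surjection $q_{max}\colon\T_X^+\cpf\to\T_X^+\rtimes_{\T_X,\alpha}\G$ which is the identity on $C_c(\G,\T_X^+)$. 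The first goal is to upgrade $q_{max}$ to a complete isometry; the second is to identify $\T_X^+\rtimes_{\T_X,\alpha}\G$ with the tensor algebra of the full crossed product correspondence $(X\cpf,\C\cpf)$.

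For the first isomorphism I would argue by dilation. A covariant completely contractive representation $(\pi,u)$ of $(\T_X^+,\G,\alpha)$ is, via the universal property of the tensor algebra \cite{MS}, the same datum as a completely contractive representation $(\rho,t,u,\H)$ of the dynamical system $((X,\C),\G,\alpha)$, and by Theorem~\ref{thm::dilation} this admits a minimal isometric coextension $(\rho',t',u',\H')$. Theorem~\ref{thm::isometricreps} then turns the isometric part into an honest $*$-representation $\pi'=\rho'\rtimes t'$ of $\T_X$, so that $(\pi',u')$ is a covariant pair of the C$^*$-system $(\T_X,\G,\alpha)$ and hence integrates through $\T_X\cpf$. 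Crucially $\H$ is coinvariant for $t'$ --- this is exactly why a coextension, rather than an arbitrary dilation, is needed --- so $\H$ is semi-invariant and the compression $P_\H(\cdot)|_\H$ is multiplicative on $\T_X^+$; together with $\H$ reducing $\rho'$ and $u'$ this yields $P_\H(\pi'\rtimes u')(f)|_\H=(\pi\rtimes u)(f)$ for every $f\in C_c(\G,\T_X^+)$. Therefore $\|(\pi\rtimes u)(f)\|\le\|(\pi'\rtimes u')(f)\|\le\|f\|_{\T_X\cpf}=\|f\|_{\T_X^+\rtimes_{\T_X,\alpha}\G}$, and taking the supremum over all covariant representations shows that the full-crossed-product norm is dominated by the relative one. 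Combined with contractivity of $q_{max}$, and the same argument run on matrix amplifications, this makes $q_{max}$ a complete isometry.

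For the second isomorphism I would work inside $\T_X\cpf$, where $\C\cpf=\overline{C_c(\G,\C)}$ and $X\cpf=\overline{C_c(\G,X)}$ realize the full crossed product correspondence and the inclusion is an isometric Toeplitz representation with $\C\cpf$ faithfully represented (cf. \cite[Remark 7.8]{KatRamMem}). A standard approximate-identity argument in the convolution algebra shows that $C_c(\G,\C)$ and $C_c(\G,X)$ generate $\overline{C_c(\G,\T_X^+)}$, so the closed nonselfadjoint algebra generated by this representation is precisely the relative crossed product. It remains to see that this concrete copy is completely isometric to the abstract $\T^+_{X\cpf}$. Here I would exploit that the scalar gauge action of $\bbT$ on $\T_X$ commutes with $\alpha$ and therefore descends to a gauge action on $\T_X\cpf$ scaling $X\cpf$ and fixing $\C\cpf$; a gauge-invariant uniqueness theorem in the spirit of Fowler \cite[Theorem 7.2]{F} then forces the induced representation of the Toeplitz algebra $\T_{X\cpf}$ to be faithful, whence it is completely isometric on the tensor algebra $\T^+_{X\cpf}$. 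This gives $\T_X^+\rtimes_{\T_X,\alpha}\G\simeq\T^+_{X\cpf}=\T^+_{X\rtimes_\alpha\G}$.

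The main obstacle is the complete isometry in the second step rather than the mere existence of the maps. Having an isometric Toeplitz representation with a faithful coefficient representation is by itself \emph{not} enough to recover the tensor algebra completely isometrically --- a sufficiently degenerate isometric representation can collapse it --- so the real work is the faithfulness (gauge-invariant uniqueness) argument that distinguishes the crossed-product correspondence representation from such collapses. In the first step the only delicate point is the systematic use of coextensions: it is the coinvariance of $\H$, and the resulting multiplicativity of the compression on the nonselfadjoint algebra, that allows the isometric $\T_X\cpf$-representation to dominate the full-crossed-product norm.
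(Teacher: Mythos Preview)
Your argument for the first isomorphism $\T_X^+\rtimes_\alpha\G\simeq\T_X^+\rtimes_{\T_X,\alpha}\G$ is exactly the paper's: disintegrate a covariant representation of $(\T_X^+,\G,\alpha)$ into $(\rho,t,u,\H)$ via \cite{MS}, apply Theorem~\ref{thm::dilation} to obtain the minimal isometric coextension, promote it through Theorem~\ref{thm::isometricreps} to a covariant pair for $(\T_X,\G,\alpha)$, and compress. Your emphasis on coinvariance of $\H$ (so that compression is multiplicative on $\T_X^+$) is the right diagnosis; the paper phrases the conclusion as ``every completely contractive representation dilates'', which is equivalent to your norm inequality together with matrix amplifications.

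For the second isomorphism $\T_X^+\rtimes_{\T_X,\alpha}\G\simeq\T^+_{X\rtimes_\alpha\G}$ you diverge from the paper, which does not reprove this but invokes \cite[discussion after Theorem 7.13]{KatRamMem} (equivalently \cite[Theorem 3.1]{BKQR}, whose isomorphism $\T_X\cpf\simeq\T_{X\cpf}$ sends generators to generators). Your direct route---observe that the scalar $\bbT$-gauge action on $\T_X$ commutes with the generalized gauge action $\alpha$, hence descends to a gauge action on $\T_X\cpf$ that scales $X\cpf$ and fixes $\C\cpf$, and then appeal to a uniqueness theorem for Toeplitz representations---is a legitimate alternative and is morally what underlies those references. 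One caution: Fowler's \cite[Theorem 7.2]{F} is not a gauge-invariant uniqueness theorem; its hypothesis is the norm condition $\|\psi_0(a)(I-P_n^\psi)\|=\|\psi_0(a)\|$ rather than the existence of a compatible $\bbT$-action, and indeed the paper exploits exactly this formulation in the reduced case (Theorem~\ref{HNtensor2}). For your argument as written you want the gauge-invariant uniqueness theorem for $\T_X$ (e.g.\ Katsura), which requires faithfulness of the coefficient map $\C\cpf\hookrightarrow\T_X\cpf$---true here because $\C\hookrightarrow\T_X$ is faithful and full crossed products preserve injections of invariant subalgebras sitting as direct summands via the gauge expectation. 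With that adjustment your sketch goes through; the paper's approach simply outsources this step.
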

\begin{proof}
It is already proven in \cite{KatRamMem}, in a discussion following Theorem 7.13, that $\T_X^+ \rtimes_{\T_X, \alpha} \G  \simeq \T_{X \rtimes_\alpha \G}^+$. (It also follows from \cite[Theorem 3.1]{BKQR} as the isomorphism $\Phi$ of that theorem maps generators to generators.)

Towards proving the remaining isomorphism, let $\varphi : \T^+_X \rtimes_\alpha \G \rightarrow B(\H)$ be a completely contractive representation. In the same way as in the proof of \cite[Theorem 4.1]{KatRamMem} one can assume that $\varphi$ is nondegenerate. Now by \cite[Proposition 3.8]{KatRamMem} there exists a  representation $(\pi, u, \H)$ of $(\T^+_X, \G, \alpha)$ so that $\varphi = \pi \rtimes u$.

By \cite[Theorem 3.10]{MS} there is a completely contractive  representation, $(\rho, t)$, of $(X,\C)$ such that $\pi = \rho \rtimes t$. Hence, in the same way as the first part of the proof of Theorem \ref{thm::isometricreps}, $(\rho, t, u, \H)$ is a completely contractive  representation of $((X,\C), \G, \alpha)$. By Theorem \ref{thm::dilation} $(\rho,t,u,\H)$ has a unique minimal isometric coextension $(\rho',t',u',\H')$ and thus by Theorem \ref{thm::isometricreps} $(\rho'\rtimes t', u', \H')$ is an isometric  representation of $(\T_X, \G, \alpha)$. 

As discussed in the proof of \cite[Theorem 3.10]{MS} $\H\subset \H'$ is seminvariant for $\rho'$ and $t'$ and thus $P_\H \rho'\rtimes t'|_\H$ is a completely contractive representation of $\T_X^+$. Moreover, that same theorem gives that 
\[
\pi = \rho\rtimes t = P_\H \rho'\rtimes t'|_\H
\]
because $\rho(c) = P_\H \rho'(c)|_\H$ and $t(\xi) = P_\H t'(\xi)|_\H$ for all $c\in\C$ and $\xi\in X$.

Therefore, every completely contractive representation of $\T_X^+ \rtimes_\alpha \G$ dilates to a completely contractive representation of $\T_X^+ \rtimes_{\T_X, \alpha} \G$ and thus they are completely isometrically isomorphic.
\end{proof}

\begin{corollary}
Let $((X, \A), \G,\alpha)$ be a $\ca$-correspondence dynamical system and assume that $\J_X=\{0 \}$.  Then all relative crossed products for $(\T_X^+, \G, \alpha)$ are canonically isomorphic via completely isometric maps.
\end{corollary}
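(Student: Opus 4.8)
The plan is to reduce the statement to the criterion recorded just after Theorem~\ref{thm::minmax}. For an arbitrary $\alpha$-admissible C$^*$-cover $(\C,\iota)$ of $\T_X^+$ that theorem already supplies surjective completely contractive homomorphisms
\[
\T_X^+ \cpf \xrightarrow{q_{max}} \T_X^+ \rtimes_{(\C,\iota), \alpha} \G \xrightarrow{q_{min}} \T_X^+ \rtimes_{\cenv(\T_X^+), \alpha} \G
\]
which are the identity on $C_c(\G, \T_X^+)$, so it is enough to prove that the composite $q_{min}\circ q_{max}$ is completely isometric. Once that is known, the chain of inequalities $\|x\| = \|q_{min}(q_{max}(x))\| \le \|q_{max}(x)\| \le \|x\|$ (and its matrix amplifications) forces $q_{max}$ to be completely isometric; since $q_{max}$ is also surjective, writing $y = q_{max}(x)$ then gives $\|q_{min}(y)\| = \|x\| = \|y\|$, so $q_{min}$ is completely isometric too. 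As $(\C,\iota)$ is arbitrary, this yields that all relative crossed products coincide canonically.

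The engine behind the argument is that $\J_X = \{0\}$ collapses the minimal relative crossed product onto a tensor-algebra crossed product that we have already computed. When Katsura's ideal is trivial the Cuntz--Pimsner covariance condition $\psi_t(\phi_X(c)) = \rho(c)$, $c \in \J_X$, is vacuous, so every isometric representation is automatically covariant; hence $\O_X = \T_X$. Combining this with the identification $\cenv(\T_X^+) \simeq \O_X$ gives $\cenv(\T_X^+) = \T_X$, and therefore the minimal relative crossed product is exactly
\[
\T_X^+ \rtimes_{\cenv(\T_X^+), \alpha} \G = \T_X^+ \rtimes_{\T_X, \alpha} \G.
\]

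Finally I would invoke Theorem~\ref{thm::isometrictensor}, which provides a completely isometric isomorphism $\T_X^+ \cpf \simeq \T_X^+ \rtimes_{\T_X, \alpha} \G$. The remaining point is to confirm that this isomorphism is the canonical one, i.e.\ that it restricts to the identity on $C_c(\G, \T_X^+)$; this is visible from its construction by dilation, where the recovered representation is $\pi = P_\H\, \rho'\rtimes t'\!\mid_\H$ and so agrees with the original on the dense subalgebra $C_c(\G, \T_X^+)$. Since $q_{min}\circ q_{max}$ is likewise the identity on $C_c(\G, \T_X^+)$, the two completely contractive maps agree on a dense subalgebra and hence coincide, so $q_{min}\circ q_{max}$ is precisely the completely isometric isomorphism of Theorem~\ref{thm::isometrictensor}. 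I expect the only genuine obstacle to be this last bookkeeping step of matching the abstract isomorphism of Theorem~\ref{thm::isometrictensor} with the canonical maps of Theorem~\ref{thm::minmax}; once $\O_X = \T_X$ is in hand, the rest is automatic.
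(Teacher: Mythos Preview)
Your argument is correct and matches the paper's intended reasoning: the corollary is stated without proof because it follows immediately from Theorem~\ref{thm::isometrictensor} once one observes that $\J_X=\{0\}$ forces $\O_X=\T_X=\cenv(\T_X^+)$, so the canonical map $\T_X^+\cpf \to \T_X^+\rtimes_{\cenv(\T_X^+),\alpha}\G$ is precisely the completely isometric map of Theorem~\ref{thm::isometrictensor}, and Theorem~\ref{thm::minmax} then finishes the job. Your ``bookkeeping'' worry is unfounded: the proof of Theorem~\ref{thm::isometrictensor} establishes that the \emph{canonical} quotient $\T_X^+\cpf \to \T_X^+\rtimes_{\T_X,\alpha}\G$ is completely isometric (via dilation of an arbitrary representation of the domain), so no separate identification with $q_{\min}\circ q_{\max}$ is needed.
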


In particular, the above applies to the non-commutative disc algebra $\A_{\infty}\subseteq \O_{\infty}$. To obtain the same conclusion for the non-commutative disc algebras $\A_n$, $n <\infty$, we need to work much harder. (See the next section.)

For the moment, we can put together all previous results to obtain the following, which summarizes our knowledge on the Hao-Ng isomorphism problem for the full crossed product.

\begin{theorem} \label{thm;summarize}
Let  $((X, \A), \G,\alpha)$ be a non-degenerate $\ca$-correspondence dynamical system. Then the following two statements are equivalent 
\begin{itemize}
\item[(i)] $\cenv (\T^+_X\cpf ) \simeq \O_X\cpf$ via a $*$-isomorphism that sends generators to generators,
\item[(ii)] $\O_X\cpf \simeq \O_{X\cpf}$ via a $*$-isomorphism that sends generators to generators, (Hao-Ng isomorphism)
\end{itemize}
and both imply
\begin{itemize}
\item[(iii)] all relative crossed products for $(\T^+_X, \G, \alpha)$ are completely isometrically isomorphic via canonical maps.
\end{itemize}
If $(X, \C)$ is hyperrigid, e.g., $\phi_X(\J_X)X=X$, then all of the above statements are equivalent.
\end{theorem}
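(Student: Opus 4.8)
The plan is to hang everything on one unconditional input: Theorem~\ref{thm::isometrictensor} gives $\T^+_X \cpf \simeq \T^+_{X \cpf}$, and since the C$^*$-envelope of a tensor algebra is its ambient Cuntz-Pimsner algebra, this yields a canonical identification $\cenv(\T^+_X \cpf) \simeq \O_{X \cpf}$ which carries the generators of $\T^+_X \cpf$ arising from $C_c(\G, X)$ and $C_c(\G, \C)$ to the corresponding generators of $\O_{X\cpf}$. With this in hand the equivalence of (i) and (ii) is immediate and reversible: statement (i) asserts $\cenv(\T^+_X \cpf)\simeq \O_X \cpf$ by a generator-preserving $*$-isomorphism, and composing it with the generator-preserving identification $\cenv(\T^+_X\cpf)\simeq\O_{X\cpf}$ turns it into exactly the Hao-Ng isomorphism $\O_{X\cpf}\simeq\O_X\cpf$ of (ii).

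For (i)$\Rightarrow$(iii) I would first invoke the remark following Theorem~\ref{thm::minmax} to reduce (iii) to showing that the canonical completely contractive surjection $q := q_{\min}\circ q_{\max}\colon \T^+_X \cpf \to \T^+_X \rtimes_{\O_X, \alpha} \G$ is completely isometric, recalling that $q$ is the identity on the dense subalgebra $C_c(\G, \T^+_X)$. Assuming (i), the canonical embedding $j\colon \T^+_X\cpf \hookrightarrow \cenv(\T^+_X\cpf)\simeq \O_X\cpf$ is a complete isometry which, by the generator-preserving hypothesis, sends $C_c(\G, X)$ and $C_c(\G,\C)$ into the copies of $C_c(\G, X)$ and $C_c(\G,\C)$ inside $\O_X\cpf$. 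Hence the image $j(\T^+_X\cpf)$ is the closure of $C_c(\G,\T^+_X)$ in $\O_X\cpf$, which is by definition the relative crossed product $\T^+_X\rtimes_{\O_X,\alpha}\G$. Thus $j$ is a complete isometry of $\T^+_X\cpf$ onto $\T^+_X\rtimes_{\O_X,\alpha}\G$ that is the identity on $C_c(\G,\T^+_X)$; since $q$ is also the identity on this dense subalgebra and both maps are bounded, $q=j$, so $q$ is completely isometric and (iii) follows. As (i)$\Leftrightarrow$(ii), statement (ii) implies (iii) as well.

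For the hyperrigid case it remains to prove (iii)$\Rightarrow$(i). Here I would use that (iii) makes $q$ a completely isometric isomorphism, hence induces a generator-preserving $*$-isomorphism of C$^*$-envelopes $\cenv(\T^+_X\cpf)\simeq\cenv(\T^+_X\rtimes_{\O_X,\alpha}\G)$. By \cite[Theorem 7.13]{KatRamMem} the right-hand side is $\O_{X\cpd}$, and the hyperrigidity hypothesis lets me invoke Theorem~\ref{HNtensor1} to get $\O_{X\cpd}\simeq\O_X\cpf$. Splicing these generator-preserving isomorphisms gives $\cenv(\T^+_X\cpf)\simeq\O_X\cpf$, which is precisely (i); combined with the first two paragraphs, all three statements are then equivalent in the hyperrigid setting.

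The chief difficulty I anticipate is not a single hard estimate but the systematic bookkeeping of the generator-preserving clauses: every isomorphism above must be verified to respect the distinguished generating sets $C_c(\G, X)$ and $C_c(\G,\C)$, since it is exactly this feature that lets me identify the image of the C$^*$-envelope embedding with the relative crossed product in the second paragraph, and that lets me concatenate the isomorphisms in the third. The one genuinely deep ingredient is Theorem~\ref{HNtensor1}, whose hyperrigidity assumption (entering through Theorem~\ref{hyperenv}) is what supplies $\O_{X\cpd}\simeq\O_X\cpf$; this is the reason the extra implication (iii)$\Rightarrow$(i) is available only when $(X,\C)$ is hyperrigid.
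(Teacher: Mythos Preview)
Your proof is correct and follows essentially the same approach as the paper: the key unconditional input is Theorem~\ref{thm::isometrictensor} giving $\cenv(\T^+_X\cpf)\simeq\O_{X\cpf}$, from which (i)$\Leftrightarrow$(ii) is immediate, and the hyperrigid implication (iii)$\Rightarrow$(i) runs through Theorem~\ref{hyperenv}/Theorem~\ref{HNtensor1}. Your (i)$\Rightarrow$(iii) is marginally more direct than the paper's (which detours through (ii) to obtain $\T^+_X\rtimes_{\O_X,\alpha}\G\simeq\T^+_{X\cpf}$), and your (iii)$\Rightarrow$(i) takes an unnecessary pass through $\O_{X\cpd}$ where the paper invokes (\ref{fullequiv}) of Theorem~\ref{hyperenv} directly, but these are cosmetic differences.
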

\begin{proof}
Assume first that  
\begin{equation} \label{eq:main1}
\cenv (\T^+_X\cpf ) \simeq \O_X\cpf
\end{equation}
canonically. Theorem~\ref{thm::isometrictensor} shows now that
\[
\T_X^+ \rtimes_\alpha \G \simeq \T_X^+ \rtimes_{\T_X, \alpha} \G  \simeq \T_{X \rtimes_\alpha \G}^+
\]
canonically and so by taking $\ca$-envelopes 
we have a canonical isomorphism
\begin{equation} \label{eq:main2}
\cenv\big(\T_X^+ \rtimes_\alpha \G\big) \simeq \cenv\big(\T_X^+ \rtimes_{\T_X, \alpha} \G\big)  \simeq \O_{X \rtimes_\alpha \G}.
\end{equation}
By ``equating" the right sides of (\ref{eq:main1}) and (\ref{eq:main2}), we obtain (ii).

Conversely, assume that (ii) holds. Then by taking $\ca$-envelopes in the isomorphisms of Theorem~\ref{thm::isometrictensor} we obtain
\[
\cenv(\T_X^+ \rtimes_\alpha \G ) \simeq \cenv(\T_X^+ \rtimes_{\T_X, \alpha} \G)  \simeq \cenv(\T_{X \rtimes_\alpha \G}^+)\simeq \O_{X \rtimes_\alpha \G} \simeq \O_X\cpf
\]
by (ii). Therefore (i) holds, as desired.

Assume now that (ii) is valid, i.e., the Hao-Ng isomorphism is implemented via a canonical map. The same map establishes
\begin{equation} \label{eq:main3}
\T^+_X \rtimes_{\O_X, \alpha} \G \simeq \T^+_{X\cpf}.
\end{equation}
By Theorem~\ref{thm::isometrictensor} we also have
\begin{equation} \label{eq:main4}
\T^+_X\cpf \simeq \T^+_{X\cpf}.
\end{equation}
From (\ref{eq:main3}) and (\ref{eq:main4}), we obtain $\T^+_X\cpf \simeq \T^+_X \rtimes_{\O_X, \alpha} \G$, or,
\[
\T^+_X\cpf \simeq \T^+_X \rtimes_{\cenv(\T^+_X), \alpha} \G
\]
canonically. By Theorem~\ref{thm::minmax}, all relative crossed products for $(\T^+_X, \G, \alpha)$ are canonically isomorphic, which is (iii).

Assume now that $(X, \C)$ is hyperrigid and all relative crossed products for $(\T^+_X, \G, \alpha)$ are canonically isomorphic. Therefore (\ref{fullequiv}) in Theorem \ref{hyperenv} implies
\[
 \cenv\big(\T^+_X\rtimes_{\O_X, \alpha} \G \big) \simeq \O_X \cpf.
 \]
 By assumption $\T^+_X\rtimes_{\O_X, \alpha} \G\simeq \T^+_X \cpf$ and so (i) is valid.
 
 Finally recall that Theorem~\ref{thm:hyperrigid} shows that a $\ca$-correspondence $X$ with $\phi_X(\J_X)X=X$ is always hyperrigid.
\end{proof}

The importance of the previous result can not be understated. First, note that condition (i) in Theorem~\ref{thm;summarize} is just the equivalence 
\[
\cenv(\A\cpf)\simeq \cenv(\A) \cpf
\]
of \cite[Problem 1]{KatRamMem} with $\A=\T^+_X$. Hence the equivalence of (i) and (ii) in Theorem~\ref{thm;summarize} shows that the Hao-Ng isomorphism and Problem 1 in \cite{KatRamMem} are actually equivalent problems in the context of tensor algebras of $\ca$-correspondences. Furthermore, if the Hao-Ng isomorphism holds, then we automatically have from (iii) that $\O_{X\cpf}\simeq \O_{X\cpd}$. Therefore a positive resolution for the Hao-Ng isomorphism conjecture also implies a positive resolution for the modified conjecture of \cite[page 70]{KatRamMem}.

Also notice that according to condition (iii), the verification of the Hao-Ng isomorphism for hyperrigid $\ca$-correspondences depends on the canonical identification of two non-selfadjoint operator algebras. We pursue this direction successfully in the next section where we verify the Hao-Ng isomorphism for all graph correspondences of row finite graphs. Furthermore,  unlike condition (ii) (Hao-Ng isomorphism), both conditions (i) and (iii) are applicable to \textit{arbitrary} dynamical systems $(\T^+_X, \G, \alpha)$, i.e., $\alpha$ does not have to be a gauge action. Thus in a sense, a generalization of the Hao-Ng isomorphism problem beyond the realm of gauge actions is possible but only in the language of non-selfadjoint operator algebras. In light of the recent results of Harris and Kim \cite{HK}, this seems to be a direction worth pursuing.

\section{Graph correspondences}

Following from the last section, we would like to prove that every isometric (Toeplitz) representation of $((X,\C),\G,\alpha)$ dilates to a covariant (Cuntz-Pimsner) representation. However, the standard proofs that the C$^*$-envelope of the tensor algebra is the Cuntz-Pimsner algebra are non-constructive \cite{KatsoulisKribsJFA, MS} which at the moment is a barrier to our method of proof. Significantly. in the case of graph correspondences such a constructive dilation proof is shown to exist.

Let $(E, V, s, r)$ be a directed graph, where both $E$ and $V$ are separable, with associated graph correspondence $(X, \C, \varphi_X)$. Recall this is where $\C = c_0(V)$, $X$ is the completion of $c_c(E)$ under the right module structure
\begin{align*}
\langle \alpha\delta_e, \beta\delta_f\rangle & = \left\{ \begin{array}{ll} \overline\alpha \beta\delta_{s(e)}, & e = f
\\ 0, & \textrm{otherwise}\end{array}\right.
\\ \delta_e\cdot \delta_v & = \left\{ \begin{array}{ll} \delta_e, & s(e) = v
\\ 0, & \textrm{otherwise} \end{array}\right. ,
\end{align*}
and the left action of $\C$ on $X$ is given by
\[
\varphi_X(\delta_v) \delta_e = \left\{\begin{array}{ll} \delta_e, & r(e) = v \\ 0, & \textrm{otherwise} \end{array}\right. .
\]
When $(X,\C)$ is the graph correspondence for a directed graph $(V,E)$ then $\O_X$ is $*$-isomorphic to the Cuntz-Krieger algebra of the graph. 



We wish to find Cuntz-Pimsner representations of these graph correspondences. 
As usual, the main concern is looking at which elements of $\C$ are mapped into $\K(X)$ by $\varphi_X$.

In the case of a graph correspondence, Raeburn \cite[Proposition 8.8]{Raeburn} gives that $\varphi_X(\delta_v) \in \K(X)$ if and only if $|r^{-1}(v)| < \infty$. Furthermore, $\delta_v \in \ker \phi_X$ if and only if $r^{-1}(v) = \emptyset$.
Thus, let 
\[
V_{\fin}= \{v\in V : 1 \leq |r^{-1}(v)| < \infty\}
\]
 be the set of vertices generating Katsura's ideal $\J_X$ and let
 \[
 \K=\{ (v,w)\mid  \exists e \in E \mbox{ with }s(e)=w, r(e) =v, v \in V_{\fin}  \}.
 \]
 
 For each pair $(v,w) \in \K$, let $E(v,w)$ be the collection of all edges starting from $w$ and ending on $v$ and let $[E((v, w)]= \bbC^{|E(v, w)|}$. In what follows we will identify the canonical basis of $[E(v,w)]$ with the elements of $E(v,w)$ and use the same symbol for both.
 
Suppose $\alpha$ is a generalized gauge action of $(X,\C)$ then it is clear that this induces a permutation of $V$ and in particular that $V_{\fin}$ is invariant under this permutation. By abuse of notation we call this permutation $\alpha : V \rightarrow V$. Furthermore, the action $\alpha$ maps $[E(v,w)]$ unitarily onto $[E(\alpha(v), \alpha(w))]$. Indeed, if $E(v, w)= \{e_1, e_2, \dots, e_n\}$ and $\xi= \sum_{i=1}^{n}c_i \delta_{e_i}$, then
\begin{align*}
\sca{\alpha(\xi), \alpha(\xi)}&= \Big\langle \alpha\Big(\sum_{i=1}^n c_i \delta_{e_i}\Big), \alpha\Big(\sum_{j=1}^n d_e \delta_{e_j}\Big)\Big\rangle \\
& = \alpha \Big(\Big\langle \sum_{i=1}^n c_i \delta_{e_i}, \sum_{j=1}^n c_j \delta_{e_j} \Big\rangle\Big)
\\ & = \alpha\Big( \sum_{i=1}^n |c_i|^2 \delta_{w} \Big) \\
& = \sum_{i=1}^n |c_i|^2 \delta_{\alpha(w)} = \sca{\xi, \xi}.
\end{align*}

\begin{proposition}
Let $(X, \C)$ be the graph correspondence of $(E, V)$ and suppose $(\rho,t,u,\H)$ is a completely contractive representation of the dynamical system $((X,\C),\G, \alpha)$. There exists a dilation to a completely contractive  representation $(\rho_1,t_1,u_1,\H_1)$ such that for every $v\in V_{\fin}$ 
\[
\rho(\delta_v) = \sum_{e\in r^{-1}(v)} t_1(\delta_e)t_1(\delta_e)^* 
\]
\end{proposition}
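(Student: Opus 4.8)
The plan is to build a dilation satisfying the genuine Cuntz--Krieger relation $\rho_1(\delta_v)=\sum_{e\in r^{-1}(v)}t_1(\delta_e)t_1(\delta_e)^*$ at every $v\in V_{\fin}$; since $\H$ will reduce $\rho_1$, restricting this identity to $\H$ recovers the asserted $\rho(\delta_v)=\sum_{e\in r^{-1}(v)}t_1(\delta_e)t_1(\delta_e)^*$. First I would reduce to the isometric case: by Theorem~\ref{thm::dilation} replace $(\rho,t,u,\H)$ by its minimal isometric coextension, which is again a representation of $((X,\C),\G,\alpha)$ and in which $\H$ is still a reducing subspace for the $\rho$- and $u$-parts. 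A dilation of this coextension is a dilation of the original, so it suffices to treat an isometric (Toeplitz) representation, which I again call $(\rho,t,u,\H)$. For such a representation each $t(\delta_e)$ is a partial isometry with $t(\delta_e)^*t(\delta_e)=\rho(\delta_{s(e)})$; moreover $t(\delta_e)^*t(\delta_f)=\rho(\langle\delta_e,\delta_f\rangle)=0$ for $e\neq f$ in $r^{-1}(v)$, and $\rho(\delta_v)t(\delta_e)=t(\varphi_X(\delta_v)\delta_e)=t(\delta_e)$ whenever $r(e)=v$. Hence $P_v:=\sum_{e\in r^{-1}(v)}t(\delta_e)t(\delta_e)^*$ is a projection with $P_v\le\rho(\delta_v)$, and the defect $D_v:=\rho(\delta_v)-P_v$ is a projection onto a subspace $\mathcal{D}_v\subseteq\rho(\delta_v)\H$; the relation fails at $v$ exactly when $D_v\neq0$.

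Before constructing the dilation I would record that the defects are equivariant. Using the covariance relations together with the facts, established just above the statement, that $\alpha$ permutes $V_{\fin}$ and maps each multiplicity space $[E(v,w)]$ unitarily onto $[E(\alpha(v),\alpha(w))]$, one gets $u(s)P_vu(s)^*=P_{\alpha_s(v)}$ and therefore $u(s)D_vu(s)^*=D_{\alpha_s(v)}$, so $u(s)$ carries $\mathcal{D}_v$ unitarily onto $\mathcal{D}_{\alpha_s(v)}$. I would then realize the dilation as an \emph{extension}, with $\H$ invariant for the new creation operators, by writing $t_1(\delta_e)=\begin{sbmatrix}t(\delta_e)&B_e\\0&C_e\end{sbmatrix}$ on $\H_1=\H\oplus\mathcal{T}$. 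The $(1,1)$-entry of $\sum_{e\in r^{-1}(v)}t_1(\delta_e)t_1(\delta_e)^*$ is $P_v+\sum_e B_eB_e^*$, so the defect is absorbed precisely when $\sum_{e\in r^{-1}(v)}B_eB_e^*=D_v$. The tail $\mathcal{T}$ is taken to be the Fock-type space built from finite paths $\mu$ with $r(\mu)\in V_{\fin}$ and defect coefficients in $\mathcal{D}_{r(\mu)}$, on which $t_1(\delta_e)$ restores the defect one path-step at a time (the length-one tail vectors landing back in $\mathcal{D}_{r(e)}\subseteq\H$), $\rho_1$ acts diagonally by source vertex, and $u_1=u\oplus\tilde u$ with $\tilde u(s)$ acting on $\mathcal{T}$ by $\alpha_s$ on the path labels and $u(s)$ on the defect coefficients; this $\tilde u$ is well defined exactly because of the equivariance above.

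The verification then proceeds as in Lemma~\ref{lemma::onestep}: one checks that $(\rho_1,t_1)$ is an isometric representation of $(X,\C)$ (the module identity $\rho_1(c)t_1(\xi)=t_1(\varphi_X(c)\xi)$ and the Toeplitz identity $t_1(\xi)^*t_1(\eta)=\rho_1(\langle\xi,\eta\rangle)$ reduce to bookkeeping on paths), that $\H$ is invariant for $t_1$ and reducing for $\rho_1$ and $u_1$ with $P_\H t_1(\xi)|_\H=t(\xi)$, so that $(\rho_1,t_1,u_1,\H_1)$ is a dilation, and that the covariance relations $u_1(s)t_1(\xi)=t_1(\alpha_s(\xi))u_1(s)$ and $u_1(s)\rho_1(c)=\rho_1(\alpha_s(c))u_1(s)$ hold, which is where $\tilde u$ and the Halmos-type intertwining argument of Lemma~\ref{lemma::onestep} are used. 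By construction $\rho_1(\delta_v)=\sum_{e\in r^{-1}(v)}t_1(\delta_e)t_1(\delta_e)^*$ for every $v\in V_{\fin}$, and restricting to the reducing subspace $\H$ gives the stated identity.

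The main obstacle is the simultaneous requirement of a correct defect-splitting and equivariance. Each $D_v$ must be distributed among the edges of $r^{-1}(v)$ and propagated along paths so that the $t_1(\delta_e)$ are at once isometric, with mutually orthogonal ranges summing to $\rho_1(\delta_v)$ on each fibre; the naive choice of a full copy of $\mathcal{D}_v$ for every incoming edge overcounts (it produces $P_v+|r^{-1}(v)|\,D_v$ rather than $\rho(\delta_v)$) and is not isometric, so the defect has to be tensored against the path-multiplicity as in the minimal Cuntz dilation. Carrying out this splitting so that it is intertwined by the single unitary representation $u_1$ is the crux, and the edge-multiplicity spaces $[E(v,w)]$, on which $\alpha$ acts unitarily, are precisely the device that makes both demands compatible.
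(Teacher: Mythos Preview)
You have misread the displayed identity. In the paper the equation
\[
\rho(\delta_v)=\sum_{e\in r^{-1}(v)} t_1(\delta_e)t_1(\delta_e)^*
\]
is an equality of operators on $\H_1$, with $\rho(\delta_v)$ the projection onto $\H_v\subseteq\H\subseteq\H_1$; it is \emph{not} $\rho_1(\delta_v)$. This is exactly what the iteration in Theorem~\ref{thm::CPdilation} needs: at step $n$ one has $\sum_e t_n(\delta_e)t_n(\delta_e)^*=\rho_{n-1}(\delta_v)$, and only in the SOT limit does one reach $\rho'(\delta_v)$. Your plan to build a genuine Cuntz--Krieger dilation with $\rho_1(\delta_v)=\sum_e t_1t_1^*$ and then ``restrict to $\H$'' cannot recover the stated identity, since $\rho_1(\delta_v)\neq\rho(\delta_v)$ whenever the dilation is nontrivial at $v$.

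The paper's proof is accordingly much more modest and entirely explicit: no preliminary isometric coextension, and no attempt at the full Cuntz--Pimsner relation. One sets
\[
\H_{w,1}=\H_w\oplus\bigoplus_{(v',w)\in\K}\H_{v'}\otimes[E(v',w)],\qquad
t_1(\delta_e)=\bigl[\,t(\delta_e)\ \ \Delta_v\tau(e)\,\bigr],
\]
where $\Delta_v=|r^{-1}(v)|^{-1/2}\bigl(\rho(\delta_v)-\sum_e t(\delta_e)t(\delta_e)^*\bigr)^{1/2}$ and $\tau(e)(h\otimes\xi)=\langle e,\xi\rangle h$. The range of each $t_1(\delta_e)$ lies in $\H_v\subseteq\H$, so $\sum_e t_1t_1^*$ lives on $\H$ and the computation $\sum_e t_1t_1^*=\sum_e tt^*+|r^{-1}(v)|\Delta_v^2=\rho(\delta_v)$ is immediate. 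The resulting $(\rho_1,t_1)$ is only completely contractive, not isometric; isometry is restored later, inside Theorem~\ref{thm::CPdilation}, via Theorem~\ref{thm::dilation} and \cite[Corollary~5.21]{MS}. Equivariance of $u_1$ is checked directly from $u(g)\Delta_v=\Delta_{\alpha_g(v)}u(g)$ and the unitarity of $\alpha_g$ on each $[E(v,w)]$.

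Your route---first coextend isometrically, then extend in one shot to a Cuntz--Pimsner representation---would bypass the iteration and prove Theorem~\ref{thm::CPdilation} directly, which is attractive. But the sketch has a real gap at the ``crux'' you identify. The Fock-type tail you describe, with $t_1(\delta_e)$ shortening paths by one step, does not produce an isometric representation on the tail once $|r^{-1}(v)|\ge 2$: on a length-one tail vector $d\otimes\delta_f$ with $s(f)=s(e)$ but $f\neq e$, your $t_1(\delta_e)$ is zero, so $t_1(\delta_e)^*t_1(\delta_e)$ is the projection onto the $e$-labelled summand rather than $\rho_1(\delta_{s(e)})$. You note the overcounting problem yourself but do not supply the fix; known one-shot extensions of Toeplitz--Cuntz families to Cuntz families (e.g.\ via inductive limits along a chosen infinite path) are non-canonical and therefore not obviously $\G$-equivariant. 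The paper sidesteps all of this by sacrificing isometry at each finite stage, which is why the construction stays canonical and the covariance relations go through cleanly.
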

\begin{proof}
By \cite[Lemma 3.5]{MS} because $(\rho, t,\H)$ is completely contractive then for $v\in V_\K$ we have the matrix inequality 
\[
[t(e)^*t(f)]_{e, f\in r^{-1}(v)} \leq [\rho(\langle e,f\rangle]_{e,f\in r^{-1}(v)} = \oplus_{e\in r^{-1}(v)} \rho(\delta_{s(e)})
\]
and so $[t(e) : e\in r^{-1}(v)]$ is a row contraction. Hence,
\[
\rho(\delta_v) \geq  \sum_{e\in r^{-1}(v)} t(\delta_e)t(\delta_e)^*
\]
and so we can define $$\Delta_v := \frac{1}{\sqrt{|r^{-1}(v)|}}\Big(\rho(\delta_v) -  \sum_{e\in r^{-1}(v)} t(\delta_e)t(\delta_e)^*\Big)^{1/2}.$$

Let $\H_w= \rho(\delta_w)\H$ and so we can assume $\H = \oplus_{w\in V} \H_w$. For each pair $(v',w') \in \K$ let
\[
\H_{v',w'} :=\H_{v'}\otimes  [E(v',w')]. 
\]
Then for each $w \in V$ we define
\[
\H^+_w\equiv \bigoplus_{(v', w)\in \K} \H_{v',w}=\bigoplus_{(v', w)\in \K} \H_{v'}\otimes  [E(v',w)].
\]
and
\[
\H_{w, 1} = \H_{w} \oplus\H^+_w.
\]
We combine these to define 
\[
\H_1 = \oplus_{w\in V} \H_{w, 1}.
\] 
Hence
\[
\rho_1(\delta_w) = I_{\H_{w,1}}, \quad w\in V,
\]
extends to a $*$-homomorphism of $\C$ that dilates $\rho$.

We also have a continuous unitary representation $u_1:\G \rightarrow B(\H_1)$ dilating $u: \G \rightarrow B(\H)$ and defined as follows.

Given $g \in \G$ and $h \in \H$, we let $u_1(g)h=u(g)h$. Otherwise, on each $\H_w^+$ the operator $u_1(g)$ is defined by
\[
\H_w^+\supseteq \H_{v',w}\ni h\otimes \xi\longmapsto u_g(h)\otimes\alpha_g(\xi)\in \H_{\alpha_g(v'),\alpha_g(w)}\subseteq \H_{\alpha(w)}^+.
\]
It is easy to see that $u_1:\G \rightarrow \B(\H_1)$ is a continuous unitary representation dilating $u$.

We are ready to dilate $t:X\rightarrow B(\H)$. If $r(e) \notin V_{\fin}$, then we let $t_1(e)=t(e)$. Otherwise, if $e \in E$ with $s(e)=w$ and $r(e)=v \in V_{\fin}$, then $t_1(e) \in B(\H_1)$ has cokernel contained in $$\H_w\oplus (\dots 0\oplus 0 \oplus \H_{v,w}\oplus0 \dots)\subseteq \H_w \oplus\H^+_w\equiv \H_{w,1}$$ range contained in $\H_v\oplus 0\subseteq \H_v\oplus \H^+_v$ and it is given by  
\[
t_1(\delta_e) = \left[t(\delta_e)  \ \ \Delta_v \tau(e)\right] \in B(\H_w\oplus \H_{v,w}, \H_v) ,
\]
where 
\[
\tau(e)\colon \H_{v,w}\longrightarrow \H_v \,  ;\, h\otimes \xi \longmapsto \sca{e,\xi}h.
\]
(In general, for $\zeta \in [E(v,w)]$, $\tau(\zeta)$ will be given by $\tau(\zeta)(h \otimes \xi)=\sca{\zeta,\xi}h$.)

It is easy to see that $\tau(e)\tau(e)^*=I_{\H_v}$ and so,
\begin{align*}
\sum_{e\in r^{-1}(v)} t_1(\delta_e)t_1(\delta_e)^*
& = \sum_{e\in r^{-1}(v)} t(\delta_e)t(\delta_e)^* + \Delta_v \tau(e)\tau(e)^*\Delta_v
\\ & = \Big( \sum_{e\in r^{-1}(v)} t(\delta_e)t(\delta_e)^* \Big)+  |r^{-1}(v)|\Delta_v^2
\\ & = \rho(\delta_v)
\end{align*}
This establishes one of the main conclusions of this proposition and gives that
\[
[t_1(e)^*t_1(f)]_{e,f\in r^{-1}(v)} \leq [\rho_1(\langle e,f\rangle)]_{e,f\in r^{-1}(v)}
\]
as in the start of the proof. Thus, by \cite[Lemma 3.5]{MS} again, $(\rho_1,t_1,\H_1)$ is a completely contractive representation of $(X,\C)$.
\vskip 6 pt

Lastly we must establish the covariance relations. For any $g\in \G$ recall that $\alpha_g$ acts as a unitary between $[E(v,w)]=[\{e_1, e_2, \dots, e_n\}]$ and $[E(\alpha_g(v),\alpha_g(w))]=[\{f_1,f_2, \dots ,f_n\}]$. This implies that
\begin{align*}
\sum_{i=1}^n t(\alpha_g(\delta_{e_i}))t(\alpha_g(\delta_{e_i}))^* &= \sum_{i=1}^n t\big( \sum_{j=1}^n (\alpha_g)_{j,i} \delta_{f_j} \big) t\big( \sum_{k=1}^n (\alpha_g)_{k,i} \delta_{f_k}\big)^*
\\ & =  \sum_{j,k=1}^n \left(\sum_{i=1}^n (\alpha_g)_{j,i}\overline{(\alpha_g)}_{k,i}\right)t(\delta_{f_j})t(\delta_{f_k})^*
\\ & = \sum_{j=1}^n t(\delta_{f_j})t(\delta_{f_j})^*
\end{align*}
and so
\begin{align*}
u(g)(\rho(\delta_v) \ -  &\sum_{e\in r^{-1}(v)} t(\delta_e)t(\delta_e)^*)
\\ & = \big(\rho(\delta_{\alpha_g(v)}) - \sum_{e\in r^{-1}(v)} t(\alpha_g(\delta_e))t(\alpha_g(\delta_e))^*\big)u(g)
\\ & = \Big(\rho(\delta_{\alpha_g(v)}) - \sum_{w \in s(r^{-1}(v))}\, \sum_{e\in E(v,w) }t(\alpha_g(\delta_e))t(\alpha_g(\delta_e))^*\Big)u(g)
\\ & = \Big(\rho(\delta_{\alpha_g(v)}) - \sum_{w \in s(r^{-1}(\alpha_g(v)))}\,\,  \sum_{f\in E(\alpha_g(v), w)} t(\delta_{f})t(\delta_{f})^*\Big)u(g)
\\ & = \Big(\rho(\delta_{\alpha_g(v)}) - \sum_{f\in r^{-1}(\alpha_g(v))} t(\delta_f)t(\delta_f)^*\Big) u(g).
\end{align*}
By a standard functional analysis trick,
\[
u(g) \Delta_v = \Delta_{\alpha_g(v)}u(g).
\]
Furthermore, if $h\otimes\xi \in \H_{v,w}$, then
\begin{align*}
 \tau(\alpha_g(e))u_1(g)(h\otimes\xi) &=\tau(\alpha_g(e))(u(g)h\otimes \alpha_g(\xi)\\
&=\sca{\alpha_g(e), \alpha_g(\xi)} u(g)h\\
&=\sca{e, \xi}u(g)h =u(g)\tau(e)(h\otimes\xi).
\end{align*}
Hence,
\begin{align*}
t_1(\alpha_g(\delta_e))u_1(g)
&= \left[ t(\alpha_g(\delta_e)) \ \ \Delta_{\alpha_g(v)}\tau(\alpha_g(e))\right] \big(u_1(g)\mid_{\H_w\oplus\H^+_w}\big)\\
&= \left[u(g)t(\delta_e) \ \ \Delta_{\alpha_g(v)} u(g)\tau(e)\right] \\
&= \left[u(g)t(\delta_e) \ \ u(g) \Delta_vu(g)\tau(e)\right] \\
&= u_1(g) t_1(\delta_e).
\end{align*}
It is also immediate that
\[
u_1(g)\rho_1(\delta_v) = \rho_1(\delta_{\alpha_g(v)})u_1(g).
\]
Therefore, $(\rho_1,t_1,u_1,\H_1)$ is a completely contractive  representation of $((X,\C),\G,\alpha)$.
\end{proof}

\begin{theorem}\label{thm::CPdilation}
Let $(X,\C)$ be the graph correspondence of $(E,V)$. Every completely contractive representation of $((X,\C), \G, \alpha)$ can be dilated to a Cuntz-Pimsner  representation. 
\end{theorem}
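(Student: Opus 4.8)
The plan is to dilate in two alternating directions and pass to an inductive limit. For a graph correspondence, a Cuntz--Pimsner representation of $((X,\C),\G,\alpha)$ is precisely an isometric (Toeplitz) representation $(\rho,t,u,\H)$ whose Toeplitz part also satisfies the covariance identity $\rho(\delta_v)=\sum_{e\in r^{-1}(v)}t(\delta_e)t(\delta_e)^*$ for every $v\in V_{\fin}$ (the vertices generating $\J_X$). We already have two equivariant one--step procedures, each pushing a completely contractive representation toward one of these requirements: Theorem~\ref{thm::dilation} (built from Lemma~\ref{lemma::onestep}) produces an isometric coextension and thus secures the Toeplitz identity, while the preceding Proposition produces a dilation for which the covariance identity holds for the \emph{original} $\rho$. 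Neither procedure preserves the gain of the other, so I would apply them alternately rather than consecutively.

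Concretely, starting from $(\rho,t,u,\H)=(\rho_0,t_0,u_0,\H_0)$ I would build an increasing tower $(\rho_n,t_n,u_n,\H_n)$ whose even steps apply the isometric dilation of Theorem~\ref{thm::dilation} and whose odd steps apply the preceding Proposition, so that both procedures occur cofinally. Setting $\H_\infty=\overline{\bigcup_n\H_n}$ and $\rho_\infty=\varinjlim\rho_n$, $t_\infty=\varinjlim t_n$, $u_\infty=\varinjlim u_n$, the covariance relations between $(\rho_\infty,t_\infty)$ and $u_\infty$ pass to the limit exactly as in the proof of Theorem~\ref{thm::dilation}, and $u_\infty$ is a strongly continuous unitary representation as a direct limit of such. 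By Theorem~\ref{thm::isometricreps} it then suffices to verify that $(\rho_\infty,t_\infty,u_\infty,\H_\infty)$ is an isometric representation of $(X,\C)$ satisfying the covariance identity at every $v\in V_{\fin}$, with $\H$ sitting in $\H_\infty$ as a semi--invariant subspace having the correct compressions (automatic, since dilations compose).

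The engine of the argument is a freezing mechanism coming from the two defect operators. In Lemma~\ref{lemma::onestep} the added block is $(I-\tilde t^*\tilde t)^{1/2}\sigma^X(\cdot)$, and $(I-\tilde t^*\tilde t)^{1/2}$ annihilates every vector at which the Toeplitz identity already holds; in the preceding Proposition the added block involves $\Delta_v=|r^{-1}(v)|^{-1/2}\big(\rho(\delta_v)-\sum_{e}t(\delta_e)t(\delta_e)^*\big)^{1/2}$, which annihilates every vector at which the covariance identity already holds. One checks that an isometric step leaves $t(\delta_e)^*h$ unchanged for $h$ in the previous space, while a covariance step leaves $t(\delta_e)h$ unchanged there; combined with the vanishing of the defects, this shows that once a vector satisfies one of the two identities, that identity is never destroyed by later steps of either kind. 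Since both steps are performed cofinally, every vector of the dense set $\bigcup_n\H_n$ eventually satisfies both identities and then retains them, so the limit is simultaneously Toeplitz and Cuntz--Pimsner covariant.

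The subtle point, and the step I expect to be the main obstacle, is the apparent clash between the two constructions: an isometric step modifies $\sum_e t(\delta_e)t(\delta_e)^*h$ by the new components $\sum_e(I-\tilde t^*\tilde t)^{1/2}(\delta_e\otimes t(\delta_e)^*h)$, so a priori it could spoil a covariance identity already achieved at $h$. The resolution is that when covariance holds at $h\in\rho(\delta_v)\H$ the vector $\zeta=\sum_{e\in r^{-1}(v)}\delta_e\otimes t(\delta_e)^*h\in\H^X$ satisfies $\|\tilde t\zeta\|=\|\zeta\|=\|h\|$, so $\tilde t$ is isometric on $\zeta$ and hence $(I-\tilde t^*\tilde t)^{1/2}\zeta=0$; the offending new components cancel and covariance is preserved after all. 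This symmetric cancellation is what makes the alternating limit land on a genuine Cuntz--Pimsner representation, and carrying it out carefully, together with the bookkeeping that the relevant vectors stabilize, is the technical crux.
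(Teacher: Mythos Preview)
Your alternating scheme works, but it is not the route the paper takes, and your stated reason for alternating is actually false. The paper applies the two procedures \emph{consecutively}: first iterate only the Proposition, obtaining in the limit a completely contractive representation $(\rho',t',u',\H')$ which is $\J_X$-coisometric on all of $\H'$ (since $\sum_e t_n(\delta_e)t_n(\delta_e)^*=\rho_{n-1}(\delta_v)$ at every stage, the limit satisfies $\sum_e t'(\delta_e)t'(\delta_e)^*=\rho'(\delta_v)$ by a strong-operator argument); then perform \emph{one} minimal isometric coextension via Theorem~\ref{thm::dilation} and invoke \cite[Corollary 5.21]{MS}, which says that the isometric coextension of a $\J_X$-coisometric representation remains $\J_X$-coisometric. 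No alternation or freezing bookkeeping is needed.

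The irony is that your own $\zeta$-computation essentially reproves \cite[Corollary 5.21]{MS} in this setting: you show that when covariance holds at $h$, the isometric defect kills the offending component, so the isometric step \emph{does} preserve covariance. Once you see that, the consecutive order (all covariance, then isometric) is clearly simpler. Your alternating plan is nonetheless salvageable. You handled ``isometric step preserves covariance'' explicitly; the symmetric claim ``covariance step preserves Toeplitz'' also holds, though you left its verification implicit. The point is that if Toeplitz holds at $h$ then $\Delta_v t(\delta_e)h=0$: using $t(\delta_g)^*t(\delta_e)h=\delta_{g,e}\rho(\delta_{s(e)})h$ one gets $\big(\rho(\delta_v)-\sum_{g\in r^{-1}(v)}t(\delta_g)t(\delta_g)^*\big)t(\delta_e)h=t(\delta_e)h-t(\delta_e)\rho(\delta_{s(e)})h=0$, so the extra block $\tau(e)^*\Delta_v t(\delta_e)h$ added by the covariance step vanishes. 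With both freezing facts in hand your limit argument goes through; the paper's proof simply avoids having to track them.
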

\begin{proof}
Let $(\rho_0,t_0,u_0,\H_0)$ be a completely contractive representation of $((X,\C),\G,\alpha)$. Recursively use the previous proposition to generate a sequence of completely contractive  representations $(\rho_n, t_n, u_n, \H_n)$ such that for each $n\geq 1$, 
$\H_{n-1} \subset \H_n$, $(\rho_n,t_n, u_n, \H_n)$ is a dilation of $(\rho_{n-1},t_{n-1},u_{n-1},\H_{n-1})$ and for every $v\in V$ such that $1\leq |r^{-1}(v)| < \infty$ we have
\[
\sum_{e\in r^{-1}(v)} t_n(\delta_e)t_n(\delta_e)^* = \rho_{n-1}(\delta_v).
\]
Thus, define $\H' = \cup_{n=0}^\infty \H_n$ and 
\[
\rho'(c)|_{\H_n} = \rho_n(c), t'(\xi)|_{\H_n} = t_n(\xi), \ \textrm{and} \ \ u'(g)|_{\H_n} = u_n(g).
\]
Hence, $(\rho', t', u', \H')$ is a dilation $(\rho_0, t_0, u_0, \H_0)$ to a completely contractive  representation such that for every $v\in V_{\fin}$
\begin{align*}
\sum_{e\in r^{-1}(v)} t'(\delta_e)t'(\delta_e)^* & = \sot-\lim_{n\rightarrow \infty} \sum_{e\in r^{-1}(v)} t'(\delta_e)I_{\H_n}t'(\delta_e)^*
\\ & = \sot-\lim_{n\rightarrow \infty} \sum_{e\in r^{-1}(v)} t_n(\delta_e)t_n(\delta_e)^*
\\ & = \sot-\lim_{n\rightarrow \infty} \rho_{n-1}(\delta_v)
\\ & = \rho'(\delta_v).
\end{align*}
According to \cite[Definition 5.3]{MS} the representation $(\rho', t', \H')$ is $\J_X$-coisometric, i.e., Cuntz-Pimsner in the sense of Katsura but without being isometric.

Lastly, by Theorem \ref{thm::dilation} there is a unique minimal isometric coextension of $(\rho',t',u',\H')$ to $(\rho'', t'', u'', \H'')$. By \cite[Corollary 5.21]{MS} this coextension process preserves the property of being $\J_X$-coisometric. Therefore, $(\rho'', t'', u'', \H'')$ is an isometric and $\J_X$-coisometric dilation of $(\rho_0,t_0,u_0,\H_0)$, that is, a Cuntz-Pimsner representation of $((X,\C),\G, \alpha)$.
\end{proof}

\begin{corollary}
Let $(X,\C)$ be the graph correspondence of a directed graph $(E,V)$ and $((X,\C), \G, \alpha)$ a $\ca$-correspondence dynamical system, that is,  $\alpha$ is a generalized gauge action. Then all relative crossed products for $(\T_X^+, \G, \alpha)$ are canonically isomorphic via completely isometric maps.
\end{corollary}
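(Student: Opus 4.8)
The plan is to reduce everything to Theorem~\ref{thm::minmax}. By that theorem it suffices to show that the composite
\[
\T_X^+ \rtimes_\alpha \G \xrightarrow{\ q_{\max}\ } \T_X^+ \rtimes_{(\C,\iota),\alpha} \G \xrightarrow{\ q_{\min}\ } \T_X^+ \rtimes_{\O_X,\alpha} \G
\]
is a complete isometry, since then each of the two maps is forced to be a complete isometry as well (recall $\cenv(\T_X^+)=\O_X$). As both maps are already completely contractive surjections restricting to the identity on $C_c(\G,\T_X^+)$, the only thing missing is the reverse norm estimate, and I would obtain it by a dilation argument that mirrors the proof of Theorem~\ref{thm::isometrictensor} but is carried one step further, all the way to a Cuntz-Pimsner representation.

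Concretely, I would start with an arbitrary completely contractive representation $\varphi\colon \T_X^+\rtimes_\alpha \G \to B(\H)$, which---exactly as in the proof of Theorem~\ref{thm::isometrictensor}---may be taken nondegenerate and written as $\varphi = (\rho\rtimes t)\rtimes u$ for a completely contractive representation $(\rho,t,u,\H)$ of the dynamical system $((X,\C),\G,\alpha)$. This is where the graph hypothesis enters: rather than stopping at an isometric coextension, I apply Theorem~\ref{thm::CPdilation} to dilate $(\rho,t,u,\H)$ to a genuine Cuntz-Pimsner representation $(\rho'',t'',u'',\H'')$ of $((X,\C),\G,\alpha)$.

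Since $(\rho'',t'')$ is an isometric covariant (Cuntz-Pimsner) representation, it integrates to a $*$-representation $\sigma\colon \O_X \to B(\H'')$; moreover the covariance relations of $u''$, which hold on the generators $\C$ and $X$, extend by multiplicativity and continuity to the covariance $u''(s)\sigma(a)=\sigma(\alpha_s(a))u''(s)$ for all $a\in\O_X$ (the argument of Theorem~\ref{thm::isometricreps}, now applied to $\O_X$ in place of $\T_X$). Thus $(\sigma,u'')$ is a covariant representation of $(\O_X,\G,\alpha)$ and promotes to $\sigma\rtimes u''\colon \O_X\rtimes_\alpha \G \to B(\H'')$; its restriction to $C_c(\G,\T_X^+)$ factors, by definition, through the relative crossed product $\T_X^+\rtimes_{\O_X,\alpha}\G$. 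Because $\H$ reduces $\rho''$ and $u''$ and is semi-invariant for $t''$, the compression $P_\H(\sigma\rtimes u'')(\cdot)|_\H$ is a completely contractive homomorphism of $\T_X^+\rtimes_{\O_X,\alpha}\G$ which, upon integrating in the group variable, recovers $\varphi$ on $C_c(\G,\T_X^+)$. Hence
\[
\|\varphi(f)\| \le \big\|(\sigma\rtimes u'')(f)\big\| \le \|f\|_{\,\T_X^+\rtimes_{\O_X,\alpha}\G}, \qquad f\in C_c(\G,\T_X^+),
\]
and taking the supremum over all such $\varphi$ (together with all matrix amplifications) yields the reverse inequality $\|f\|_{\,\T_X^+\rtimes_\alpha\G}\le \|f\|_{\,\T_X^+\rtimes_{\O_X,\alpha}\G}$. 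Therefore $q_{\min}\circ q_{\max}$ is a complete isometry, and Theorem~\ref{thm::minmax} now forces all relative crossed products to coincide.

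I expect no genuine obstacle remaining: the substantive and nontrivial step, namely the \emph{constructive} dilation of an arbitrary completely contractive representation to a Cuntz-Pimsner one, has already been accomplished in Theorem~\ref{thm::CPdilation}, and it is precisely this ingredient (available for every graph correspondence, with no row-finiteness or hyperrigidity needed) that is missing for general $\ca$-correspondences. The only care required is bookkeeping: one must verify that the composite dilation leaves $\H$ reducing for $\rho''$ and $u''$ and semi-invariant for $t''$, so that the compression is multiplicative on $\T_X^+$ and reproduces $\varphi$ after integration over $\G$.
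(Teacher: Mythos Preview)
Your proposal is correct and is precisely the argument the paper has in mind: the paper's proof is the single line ``This is an immediate consequence of Theorem~\ref{thm::CPdilation} and Theorem~\ref{thm::minmax},'' and you have simply unpacked that consequence following the template of Theorem~\ref{thm::isometrictensor}, replacing the isometric coextension there by the full Cuntz--Pimsner dilation supplied by Theorem~\ref{thm::CPdilation}. The bookkeeping point you flag at the end is exactly right and causes no trouble: the dilation of Theorem~\ref{thm::CPdilation} is an extension (each one-step dilation in the preceding proposition has $t_1(\H_1)\subseteq \H$, so $\H$ is $t'$-invariant) followed by an isometric coextension, and extension-then-coextension makes $\H$ semi-invariant for $t''$, so the compression to $\H$ is multiplicative on $\T_X^+$ and recovers $\varphi$ after integrating in the group variable.
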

\begin{proof}
This is an immediate consequence of Theorem~\ref{thm::CPdilation} and Theorem~\ref{thm::minmax}.\end{proof}

With an added assumption this gives a positive solution to the Hao-Ng isomorphism problem.

\begin{corollary}\label{hao-ng}
Let $(X,\C)$ be the graph correspondence of a row-finite directed graph $(E,V)$ and $((X,\C), \G, \alpha)$ a $\ca$-correspondence dynamical system. Then
\[
\O_{X\rtimes_\alpha \G} \simeq \O_X \rtimes_\alpha \G.
\]
\end{corollary}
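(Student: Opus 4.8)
The plan is to deduce this from Theorem~\ref{thm;summarize}: for a hyperrigid correspondence that theorem makes condition~(iii) (coincidence of all relative crossed products) equivalent to condition~(ii), the Hao-Ng isomorphism $\O_X\cpf\simeq\O_{X\cpf}$. So I would establish two things---that all relative crossed products for $(\T_X^+,\G,\alpha)$ coincide, and that the graph correspondence is hyperrigid---and then let Theorem~\ref{thm;summarize} close the gap.

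The first point is already available. The immediately preceding corollary asserts, for the graph correspondence of an \emph{arbitrary} directed graph, that all relative crossed products for $(\T_X^+,\G,\alpha)$ are canonically completely isometrically isomorphic; its proof rests on the constructive Cuntz-Pimsner dilation of Theorem~\ref{thm::CPdilation} together with Theorem~\ref{thm::minmax}, and uses no finiteness hypothesis. Thus (iii) holds with no further work.

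The only genuinely new step is to see that row-finiteness forces hyperrigidity, which by Theorem~\ref{thm:hyperrigid} amounts to verifying $\phi_X(\J_X)X=X$. First I would recall that Katsura's ideal is $\J_X=c_0(V_{\fin})$ with $V_{\fin}=\{v:1\le|r^{-1}(v)|<\infty\}$, and that the left action satisfies $\phi_X(\delta_v)\delta_e=\delta_e$ exactly when $r(e)=v$. Row-finiteness says each vertex receives only finitely many edges, so every $v\in V$ is either a source (with $r^{-1}(v)=\emptyset$) or lies in $V_{\fin}$. Now every edge $e$ witnesses $e\in r^{-1}(r(e))$, so $r(e)$ is never a source and hence lies in $V_{\fin}$; therefore $\phi_X(\J_X)\delta_e=\delta_e$ for every $e\in E$. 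Since $X=\overline{\spn}\{\delta_e:e\in E\}$ this gives $\phi_X(\J_X)X=X$, and the same computation shows $(X,\C)$ is non-degenerate. Theorem~\ref{thm:hyperrigid} then makes $(X,\C)$ hyperrigid.

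With (iii) in hand and $(X,\C)$ hyperrigid, Theorem~\ref{thm;summarize} yields condition~(ii), i.e.\ $\O_X\cpf\simeq\O_{X\cpf}$; rewriting $\cpf$ as $\rtimes_\alpha\G$ and $X\cpf$ as $X\rtimes_\alpha\G$ gives precisely $\O_{X\rtimes_\alpha\G}\simeq\O_X\rtimes_\alpha\G$. I anticipate no real obstacle: the sole place the row-finite hypothesis is genuinely used is in identifying $V_{\fin}$ with the non-source vertices, and this is exactly what upgrades the unconditional statement~(iii) to the Hao-Ng isomorphism.
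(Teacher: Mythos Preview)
Your proposal is correct and follows essentially the same approach as the paper: establish hyperrigidity of the row-finite graph correspondence via $\phi_X(\J_X)X=X$ and Theorem~\ref{thm:hyperrigid}, then invoke Theorem~\ref{thm;summarize} together with the preceding corollary (which supplies condition~(iii)). The paper's proof is terser---it simply cites the description of $\J_X$ and Theorem~\ref{thm:hyperrigid} for hyperrigidity, then Theorem~\ref{thm;summarize}---but your expanded verification that every edge has range in $V_{\fin}$ is exactly the computation underlying that citation.
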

\begin{proof}
By the description of $\J_X$ mentioned at the start of this section and Theorem~\ref{thm:hyperrigid}, the graph correspondence of a row-finite directed graph $(E,V)$ is hyperrigid. The conclusion follows from Theorem~\ref{thm;summarize}.
\end{proof}

\vspace{0.1in}

{\noindent}{\it Acknowledgement.} Part of this research was carried out in the summer of 2018 while EK was visiting Chongqing Normal University, Chongqing University, the Chinese Academy of Science in Beijing and the Chern Institute of Mathematics. EK would like to thank his hosts Professors Liming Ge, Hanfeng Li, Chi-Keung Ng, Wenhua Qian, Wenmimg Wu and Wei Yuan for the stimulating conversations and their hospitality during his stay at their institutions.
\vspace{0.1in}

\end{document}